\documentclass[12pt]{amsart}

\textwidth=5.5in \textheight=8.5in

\usepackage{latexsym, amssymb, amsmath}

\usepackage{amsfonts, graphicx}
\usepackage{graphicx,color}
\newcommand{\be}{\begin{equation}}
\newcommand{\ee}{\end{equation}}
\newcommand{\beq}{\begin{eqnarray}}
\newcommand{\eeq}{\end{eqnarray}}

\newtheorem{thm}{Theorem}[section]

\newtheorem{lma}{Lemma}[section]

\newtheorem{cor}{Corollary}[section]

\theoremstyle{remark}

\numberwithin{equation}{section}

\def\p{\partial}

\def\C{\mathcal{C}}
\def\R{\mathbb{R}}

\def\p{\partial}
\def\lf{\left}
\def\ri{\right}
\def\e{\epsilon}
\def\ol{\overline}
\def\R{\Bbb R}
\def\wt{\widetilde}
\def\la{\langle}
\def\ra{\rangle}

\def\Ric{\text{\rm Ric}}

\def\Pi{\overline{\displaystyle{\mathbb{II}}}}

\def\a{\alpha}
\def\b{\beta}

\def\heat{\lf(\Delta -\frac{\p}{\p t}\ri)}
\def\K{K\"ahler }

\def\aint{\frac{\ \ }{\ \ }{\hskip -0.4cm}\int}

\def\heat{\lf(\frac{\p}{\p t}-\Delta\ri)}

\def\Ric{\text{Ric}}
\def\lf{\left}
\def\ri{\right}

\def\a{\alpha}
\def\ol{\overline}

\def\e{\epsilon}
\def\p{\partial}

\def\C{\Bbb C}
\def\R{\Bbb R}

\def\bb{{\bar{\beta}}}
\def\abb{\alpha{\bar{\beta}}}

\def\tn{\widetilde \nabla}

\def\aint{\frac{\ \ }{\ \ }{\hskip -0.4cm}\int}

\def\Ric{\operatorname{Ric}}
\def\Rm{\operatorname{Rm}}

\def\K{K\"ahler\ }
\def\be{\begin{equation}}
\def\ee{\end{equation}}

\def\b{\bar}

\def\cd{\nabla}

\def\ppt{\frac{\partial}{\partial t}}

\def\ppza{\frac{\partial}{\partial z^A}}
\def\t{\tilde}
\def\bee
{\begin{equation*}}

\def\eee{\end{equation*}}

\def\n{\nabla}
\def\tn{\wt\nabla}

\def\la{\langle}
\def\ra{\rangle}
\def\bee{\begin{equation*}}
\def\eee{\end{equation*}}
\def\aint{\frac{\ \ }{\ \ }{\hskip -0.4cm}\int}

\def\ol{\overline}
\def\e{\epsilon}
\def\lf{\left}
\def\heat{\lf(\frac{\p}{\p t}-\Delta\ri)}

\def\ri{\right}

\def\a{{\alpha}}
\def\b{{\beta}}

\def\bb{{\bar\b}}

\def\abb{{\a\bb}}

\def\wt{\widetilde}
\def\tn{{\wt\nabla}}
\def\p{\partial}

\def\K{K\"ahler }
\def\KR{K\"ahler-Ricci }
\def\KRF{K\"ahler-Ricci flow }

\def\be{\begin{equation}}
\def\ee{\end{equation}}
\def\ol{\overline}
\def\lf{\left}
\def\ri{\right}
\def\a{{\alpha}}
\def\b{{\beta}}

\def\e{\epsilon}

\def\bb{{\bar\b}}

\def\Ric{\text{\rm Ric}}
\def\Rm{\text{\rm Rm}}

\def\abb{{\a\bb}}

\def\wt{\widetilde}
\def\tn{{\wt\nabla}}
\def\p{\partial}

\def\C{\Bbb C}

\def\wt{\widetilde}
\def\tn{{\wt\nabla}}
\def\p{\partial}

\def\p{\partial}

\def\C{\Bbb C}

\def\KRF{K\"ahler-Ricci flow }

\def\n{\nabla}
\def\tn{\wt\nabla}

\def\cd{\nabla}

\def\ppt{\frac{\partial}{\partial t}}

\def\ppza{\frac{\partial}{\partial z^A}}
\def\t{\tilde}

\def\n{\nabla}
\def\tn{\wt\nabla}

\def\la{\langle}
\def\ra{\rangle}
\def\bee{\begin{equation*}}
\def\eee{\end{equation*}}
\def\aint{\frac{\ \ }{\ \ }{\hskip -0.4cm}\int}

\def\ol{\overline}
\def\e{\epsilon}
\def\lf{\left}
\def\heat{\lf(\frac{\p}{\p t}-\Delta\ri)}

\def\ri{\right}

\def\a{{\alpha}}
\def\b{{\beta}}

\def\bb{{\bar\b}}

\def\abb{{\a\bb}}

\def\wt{\widetilde}
\def\tn{{\wt\nabla}}
\def\p{\partial}

\def\K{K\"ahler }
\def\KR{K\"ahler-Ricci }
\def\KRF{K\"ahler-Ricci flow }

\def\be{\begin{equation}}
\def\ee{\end{equation}}
\def\ol{\overline}
\def\lf{\left}
\def\ri{\right}
\def\a{{\alpha}}
\def\b{{\beta}}

\def\e{\epsilon}

\def\bb{{\bar\b}}

\def\Ric{\text{\rm Ric}}
\def\Rm{\text{\rm Rm}}

\def\abb{{\a\bb}}

\def\wt{\widetilde}
\def\tn{{\wt\nabla}}
\def\p{\partial}

\def\C{\Bbb C}

\def\wt{\widetilde}
\def\tn{{\wt\nabla}}
\def\p{\partial}

\def\p{\partial}

\def\C{\Bbb C}

\def\KRF{K\"ahler-Ricci flow }

\def\n{\nabla}
\def\tn{\wt\nabla}

\def\ppt{\frac{\partial}{\partial t}}

\def\n{\nabla}
\def\tn{\wt\nabla}

\def\la{\langle}
\def\ra{\rangle}
\def\bee{\begin{equation*}}
\def\eee{\end{equation*}}
\def\aint{\frac{\ \ }{\ \ }{\hskip -0.4cm}\int}

\def\ol{\overline}
\def\e{\epsilon}
\def\lf{\left}
\def\heat{\lf(\frac{\p}{\p t}-\Delta\ri)}

\def\ri{\right}

\def\a{{\alpha}}
\def\b{{\beta}}

\def\bb{{\bar\b}}

\def\abb{{\a\bb}}

\def\wt{\widetilde}
\def\tn{{\wt\nabla}}
\def\p{\partial}

\def\K{K\"ahler }
\def\KR{K\"ahler-Ricci }
\def\KRF{K\"ahler-Ricci flow }

\def\be{\begin{equation}}
\def\ee{\end{equation}}
\def\ol{\overline}
\def\lf{\left}
\def\ri{\right}
\def\a{{\alpha}}
\def\b{{\beta}}

\def\e{\epsilon}

\def\bb{{\bar\b}}

\def\Ric{\text{\rm Ric}}
\def\Rm{\text{\rm Rm}}

\def\abb{{\a\bb}}

\def\wt{\widetilde}
\def\tn{{\wt\nabla}}
\def\p{\partial}

\def\C{\Bbb C}

\def\wt{\widetilde}
\def\tn{{\wt\nabla}}
\def\p{\partial}

\def\p{\partial}

\def\C{\Bbb C}

\def\KRF{K\"ahler-Ricci flow }

\def\n{\nabla}
\def\tn{\wt\nabla}

\def\cd{\nabla}

\def\ppt{\frac{\partial}{\partial t}}

\def\ppza{\frac{\partial}{\partial z^A}}
\def\t{\tilde}

\def\n{\nabla}
\def\tn{\wt\nabla}

\def\la{\langle}
\def\ra{\rangle}
\def\bee{\begin{equation*}}
\def\eee{\end{equation*}}
\def\aint{\frac{\ \ }{\ \ }{\hskip -0.4cm}\int}

\def\ol{\overline}
\def\e{\epsilon}
\def\lf{\left}
\def\heat{\lf(\frac{\p}{\p t}-\Delta\ri)}

\def\ri{\right}

\def\a{{\alpha}}
\def\b{{\beta}}

\def\bb{{\bar\b}}

\def\abb{{\a\bb}}

\def\wt{\widetilde}
\def\tn{{\wt\nabla}}
\def\p{\partial}

\def\K{K\"ahler }
\def\KR{K\"ahler-Ricci }
\def\KRF{K\"ahler-Ricci flow }

\def\be{\begin{equation}}
\def\ee{\end{equation}}
\def\ol{\overline}
\def\lf{\left}
\def\ri{\right}
\def\a{{\alpha}}
\def\b{{\beta}}

\def\e{\epsilon}

\def\bb{{\bar\b}}

\def\Ric{\text{\rm Ric}}
\def\Rm{\text{\rm Rm}}

\def\abb{{\a\bb}}

\def\wt{\widetilde}
\def\tn{{\wt\nabla}}
\def\p{\partial}

\def\C{\Bbb C}

\def\wt{\widetilde}
\def\tn{{\wt\nabla}}
\def\p{\partial}

\def\p{\partial}

\def\C{\Bbb C}

\def\KRF{K\"ahler-Ricci flow }

\def\n{\nabla}
\def\tn{\wt\nabla}

\def\ppt{\frac{\partial}{\partial t}}
\def\pptk{\frac{\partial^k}{\partial t^k}}

\begin{document}
\title{  K\"ahler-Ricci flow with  unbounded curvature}

\author{Shaochuang Huang}

\address{Department of Mathematics,
The Institute of Mathematical Sciences and Department of
 Mathematics, The Chinese University of Hong Kong,
Shatin, Hong Kong, China.} \email{schuang@math.cuhk.edu.hk}

\author{Luen-Fai Tam$^1$}
\address{The Institute of Mathematical Sciences and Department of
 Mathematics, The Chinese University of Hong Kong,
Shatin, Hong Kong, China.} \email{lftam@math.cuhk.edu.hk}
\thanks{$^1$Research partially supported by Hong Kong  RGC General Research Fund
\#CUHK 1430514}
 \renewcommand{\subjclassname}{
  \textup{2010} Mathematics Subject Classification}
\subjclass[2010]{Primary 32Q15; Secondary 53C44
}
\begin{abstract}
Let $g(t)$ be a   complete solution to the Ricci flow on a   noncompact manifold such that $g(0)$ is \K. We   prove that if $|\Rm(g(t))|_{g(t)}\le a/t$ for some $a>0$, then $g(t)$ is \K for $t>0$.  We prove that there is a constant $ a(n)>0$ depending only on $n$ such that the following is true: Suppose $g(t)$ is a complete solution to the \KR flow on a noncompact $n$-dimensional  complex manifold such that  $g(0)$ has nonnegative holomorphic bisectional curvature and such that $|\Rm(g(t))|_{g(t)}\le a(n)/t$, then $g(t)$ has nonnegative holomorphic bisectional curvature for $t>0$. These generalize the results in \cite{Shi1997}. As corollaries, we prove that (i) any complete noncompact \K manifold with nonnegative complex sectional curvature with maximum volume growth is biholomorphic to $\C^n$; and (ii) there is $\e(n)>0$ depending only on $n$ such that if $(M^n,g_0)$ is a complete noncompact \K manifold of complex dimension $n$
with nonnegative holomorphic bisectional curvature and maximum volume growth and if $(1+\e(n))^{-1}h\le g_0\le (1+\e(n))h$ for some Riemannian metric $h$ with bounded curvature, then $M$ is biholomorphic to $\C^n$. \vskip .1cm

 \noindent{\it Keywords}:  Ricci flow,  K\"ahler condition, holomorphic bisectional curvature, uniformization

\end{abstract}

\date{June, 2015}
\maketitle
\markboth{Shaochuang Huang and Luen-Fai Tam} {K\"ahler Ricci flow with unbounded curvature}
\section{Introduction}

In \cite{Simon2002},   Simon proved that there is a constant $\e(n)>0$ depending only on $n$ such that if $(M^n,g_0)$ is a complete $n$ dimensional Riemannian manifold and if there is another metric $h$ with curvature bounded by $k_0$ with $$(1+\e(n))^{-1}h\le g_0\le (1+\e(n))h,$$
then the so-called $h$-flow has a short time solution $g(t)$ such that
\be\label{e-Rbound}
|\Rm(g(t))|_{g(t)}\le C/t.
 \ee
 Here $h$-flow is basically  the usual Ricci-DeTurck flow. If $h=g_0$, the $h$-flow is exactly the Ricci-DeTurck flow. For the precise definition of $h$-flow, see Section \ref{s-hflow}.  It is not hard to construct Ricci flow using the solution of $h$-flow if $g_0$ is smooth. On the other hand, in \cite{Cabezas-RivasWilking2011},
 Cabezas-Rivas and Wilking  proved that if $(M,g_0)$ is a complete noncompact Riemannian manifold  with nonnegative complex sectional curvature, and if the volume of geodesic ball $B(x,1)$ of radius 1 with center at $x$ is uniformly bounded below away from 0, then the Ricci flow have a solution for short time with nonnegative complex sectional curvature so that \eqref{e-Rbound} holds.
 Recall that a Riemannian manifold is said to have nonnegative complex sectional curvature if $R(X,Y,\bar Y,\bar X)\ge0$ for any vectors in the complexified tangent bundle.

 It is natural to ask the following:

  {\it Question:  Suppose  $g_0$ is K\"ahler. Are the above solutions $g(t)$ of Ricci flow also \K for  $t>0$?}

 This question has been studied before. It was proved by
Yang and Zheng \cite{YangZheng2013}  for a   $U(n)$ invariant initial \K metric on $\C^n$,  the solution constructed by Cabezas-Rivas and Wilking is \K for $t>0$, under some  additional technical  conditions.

It is well-known that if $M$ is compact or if the curvature of $g_0$ is bounded,  the answer to the above question is yes by \cite{Hamilton1982} and \cite{Shi1997}.
In this paper, we want to prove the following:

\begin{thm}\label{t-intro-1} If $(M^n,g_0)$ is a complete noncompact \K manifold with complex dimension $n$ and if $g(t)$ is a smooth complete solution to the Ricci flow on $M\times[0,T]$, $T>0$, with $g(0)=g_0$ such that
\bee\label{e-curvaturebound}
 |\Rm(g(t))|_{g(t)}\le \frac at
\eee for some $a>0$,
 then $g(t)$ is \K for all $0\le t\le T$.
 \end{thm}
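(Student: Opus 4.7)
My plan is to identify a tensor $A$ measuring the failure of the \K condition, show it satisfies a linear heat-type equation with coefficients controlled by $|\Rm|$, and then use the bound $|\Rm|\le a/t$ via a weighted parabolic maximum principle to propagate the initial vanishing from $t=0$ forward.

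Let $J$ denote the integrable complex structure on $M$ for which $g_0$ is \K, and set $A(t):=\nabla^{g(t)}J$. Then $g(t)$ is \K if and only if $A(t)\equiv 0$, and $A(0)\equiv 0$ by hypothesis. Using the Ricci flow variation of the Christoffel symbols,
\[
\partial_t\Gamma^k_{ij}=-g^{kl}\bigl(\nabla_iR_{jl}+\nabla_jR_{il}-\nabla_lR_{ij}\bigr),
\]
together with the Ricci identity for $[\Delta,\nabla]J$, a Bochner-type computation analogous to Shi's bounded-curvature proof should yield a schematic identity
\[
\heat A = \Rm * A,
\]
in which the a priori inhomogeneous $\nabla\Ric*J$ terms produced by $\partial_t\Gamma$ cancel against the $\nabla\Rm*J$ terms from $[\Delta,\nabla]J$ via the second Bianchi identity. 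Pairing with $A$ and accounting for the Ricci flow variation of $g$ then gives
\[
\heat |A|^2 \le -2|\nabla A|^2 + C_n|\Rm|\,|A|^2 \le -2|\nabla A|^2 + \frac{C_n a}{t}|A|^2
\]
on $M\times(0,T]$. Setting $u := t^{-C_na}|A|^2$, a direct differentiation gives $\heat u \le 0$.

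The remainder is a maximum principle argument. Spatial noncompactness is handled by Shi-type derivative estimates available under $|\Rm|\le a/t$, which bound $|A|$ uniformly on $M\times[\tau,T]$ for each $\tau>0$; a noncompact parabolic maximum principle (exploiting distance-distortion estimates of Perelman--Simon type under $|\Ric|\le a/t$) then yields
\[
\sup_M u(t) \le \sup_M u(\tau), \qquad 0<\tau\le t\le T.
\]
Taking $\tau\to 0^+$ forces $u\equiv 0$ provided one can establish $\sup_M|A|^2(\tau)=o(\tau^{C_na})$ as $\tau\to 0^+$. Pointwise, all time derivatives of $A$ at $t=0$ vanish by induction on the evolution equation together with $A(0)\equiv 0$, giving flat pointwise vanishing of $A$ at $t=0$; the task is to promote this to a spatially uniform rate.

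The main obstacle is precisely this last step. On a compact manifold one would obtain the uniform rate from pointwise smoothness by Dini's theorem, but the noncompact setting requires genuine control at spatial infinity. The naive estimate $\int_0^\tau|\Rm|\,ds\sim|\log\tau|$ is too weak for a direct Gronwall bootstrap against the weight $t^{-C_na}$. I expect the resolution to use a localized parabolic estimate on geodesic balls, with cutoffs whose time dependence is adapted to the distance distortion controlled by $|\Ric|\le a/t$, possibly combined with a contradiction argument at a putative first point where $|A|$ becomes nontrivial.
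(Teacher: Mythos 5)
Your proposal correctly isolates the central difficulty but stops short of resolving it: you flag ``promoting flat pointwise vanishing to a spatially uniform rate'' as ``the main obstacle'' and then only speculate about the form of the resolution. That step is the real content of the theorem, and the paper packages it as a standalone parabolic maximum principle (Lemma~\ref{l-maximum}): if $f\ge 0$ satisfies $\heat f\le \frac{a}{t}f$, has all time derivatives vanishing at $t=0$, and obeys an a priori polynomial bound $f\le Ct^{-l}$, then $f\equiv 0$. The proof has two stages. First, for every $k$, the bound is upgraded to $f\le B_k t^k$ by maximizing $\theta(t)\,\Phi\!\bigl(d(x,t)/r\bigr)\,t^{-k}f$ with a time weight $\theta(t)=\exp(-\alpha t^{1-\beta})$ and a cutoff built on the \emph{time-dependent} distance, using Perelman's lower barrier for $\partial_t d-\Delta d$ under $|\Rm|\le a/t$ with $r_0=t^{1/2}$, and choosing the power $m$ in $\Phi=\phi^m$, then $\beta$, then $\alpha$, as functions of $k$ and $l$ so as to make the bound $r$-independent. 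Second, once $f\le Ct^2$ is in hand, one sets $F=t^{-a}f$ and closes with a more classical barrier involving $\rho\exp\bigl(\tfrac{2C_5}{1-b}t^{1-b}\bigr)$. Your guess -- a localized estimate with cutoffs adapted to distance distortion -- is the right flavor, but the exponential-in-$t$ weight and the iteration in $k$ are essential ingredients that your sketch does not supply, and this gap is not a finishing detail: it is the theorem.

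You also work with a genuinely different quantity: $A=\nabla^{g(t)}J$ rather than the paper's scalar $\varphi$ built from the $(2,2)$-type components $\wt R_{\alpha\beta\gamma\delta}$ of the complexified curvature pulled back through Uhlenbeck's trick to a bundle with a fixed metric. The curvature-based quantity has two concrete advantages: the hypothesis $|\Rm|\le a/t$ immediately yields the polynomial upper bound $\varphi\le a^2/t^2$ required by hypothesis (iii) of Lemma~\ref{l-maximum}, and the evolution equation \eqref{7} is taken directly from Shi. For $A=\nabla J$, the identity $\heat A=\Rm*A$ is asserted but not verified (the claimed Bianchi cancellation of $\nabla\Ric*J$ against the commutator terms needs to be checked), you would need a separate argument for a polynomial bound on $\sup_M|A|^2$, and -- more subtly -- $\nabla^{g(t)}J=0$ alone does not make $g(t)$ K\"ahler: you must also propagate the Hermitian compatibility $g(t)(J\cdot,J\cdot)=g(t)(\cdot,\cdot)$, which $\nabla J$ does not control. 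Finally, the ``flat pointwise vanishing'' of $\partial_t^k A|_{t=0}$ is not a one-line consequence of $A(0)\equiv 0$; the analogous statement for the curvature quantity (Lemma~\ref{l-timederivative-1}) requires a careful seven-part simultaneous induction on the curvature, connection, and their covariant derivatives, followed by Lemma~\ref{l-timederivative-2} to transfer through the Uhlenbeck frame.
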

This gives an affirmative answer to the above question. The result is related to previous  works on the   on the existence of \KR flows without curvature bound, see \cite{ChauLiTam1,ChauLiTam2,GT,YangZheng2013} for example.

We may apply the theorem to the uniformization conjecture by Yau \cite{Y} which states that  a complete noncompact
K\"ahler manifold with positive holomorphic bisectional curvature is
biholomorphic to $\C^n$. A previous result by Chau and the second author \cite{ChauTam2008}   says that the conjecture is true if the \K manifold has maximum volume growth and has {\it bounded  curvature}, see also   \cite{Mok1984,CTZ}.
Combining this with the theorem, we have:

\begin{cor}\label{c-intro-1}
Let $(M^n,g_0)$ be a complete noncompact \K manifold with complex dimension $n$ and with nonnegative complex sectional curvature. Suppose $M^n$ has maximum volume growth. Then $M^n$ is biholomorphic to $\C^n$.
\end{cor}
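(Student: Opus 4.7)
The plan is to smooth $g_0$ by Ricci flow and reduce Corollary \ref{c-intro-1} to the bounded-curvature uniformization result of Chau and the second author \cite{ChauTam2008} via Theorem \ref{t-intro-1}.

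First I would invoke the Cabezas-Rivas--Wilking short-time existence theorem \cite{Cabezas-RivasWilking2011}: since $(M,g_0)$ has nonnegative complex sectional curvature and, by the maximum volume growth assumption, a uniform positive lower bound on the volume of unit geodesic balls, there is a smooth complete Ricci flow $g(t)$ on $M\times[0,T]$ with $g(0)=g_0$ such that each $g(t)$ has nonnegative complex sectional curvature and $|\Rm(g(t))|_{g(t)}\le a/t$ for some $a>0$ on $(0,T]$. Theorem \ref{t-intro-1} then applies and gives that $g(t)$ is \K for every $t\in[0,T]$. In particular, for any fixed $t_0\in(0,T]$, $(M,g(t_0))$ is a complete \K manifold of bounded curvature with nonnegative holomorphic bisectional curvature (and hence $\Ric\ge 0$).

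Next I would check that the maximum volume growth condition passes from $g_0$ to $g(t_0)$. Because $\Ric(g(t_0))\ge 0$, Bishop--Gromov ensures that $V_{g(t_0)}(B_{g(t_0)}(x,r))/r^{2n}$ is monotonically non-increasing in $r$, so it is enough to produce a positive lower bound on its limit. From $\partial_t g=-2\Ric\le 0$ one has $g(t_0)\le g_0$, hence $d_{g(t_0)}\le d_{g_0}$ and $B_{g_0}(x,r)\subseteq B_{g(t_0)}(x,r)$; coupling this inclusion with the monotonicity of the asymptotic volume ratio along Ricci flows with $\Ric\ge 0$ (or, equivalently, with a direct comparison between the evolving volume forms $\partial_t\,dV_{g(t)}=-R\,dV_{g(t)}$, where $R$ is controlled via $|\Rm|\le a/t$ on large balls after a short interval and Bishop--Gromov) yields $AVR(g(t_0))>0$.

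Finally, I would distinguish two cases. If $g(t_0)$ has strictly positive holomorphic bisectional curvature for some $t_0\in(0,T]$, then $(M,g(t_0))$ satisfies the hypotheses of \cite{ChauTam2008} (complete, noncompact, \K, bounded curvature, positive bisectional curvature, maximum volume growth) and is biholomorphic to $\C^n$. Otherwise, by the strong maximum principle of Mok/Bando applied to the \K-Ricci flow $g(t)$ with nonnegative bisectional curvature, the universal cover of $M$ splits holomorphically and isometrically as $\C^k\times N$, where $N$ is a complete \K manifold of lower complex dimension with nonnegative bisectional curvature and maximum volume growth of its own, and one concludes by induction on $n$.

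The main obstacle is the preservation of maximum volume growth from $t=0$ to $t=t_0>0$: the curvature bound $|\Rm|\le a/t$ is not integrable at $t=0$, so $g(t_0)$ cannot be compared to $g_0$ by naive integration of the evolution equation, and the argument must exploit $\Ric\ge 0$ at both endpoints together with Bishop--Gromov monotonicity. A secondary point is reducing from nonnegative to strictly positive bisectional curvature, which is handled by the strong maximum principle plus an induction on complex dimension.
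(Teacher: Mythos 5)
Your overall structure matches the paper's: Cabezas-Rivas--Wilking short-time existence with $|\Rm|\le a/t$ and nonnegative complex sectional curvature, then Theorem~\ref{t-intro-1} to get K\"ahlerity, then show that $g(t_0)$ for some $t_0>0$ retains maximum volume growth, then apply the uniformization theorem of \cite{ChauTam2008}. The gap is in the third step, and you half-acknowledge it yourself. The inclusion $B_{g_0}(x,r)\subseteq B_{g(t_0)}(x,r)$ gives
\[
\mathrm{Vol}_{g(t_0)}\big(B_{g(t_0)}(x,r)\big)\ \ge\ \mathrm{Vol}_{g(t_0)}\big(B_{g_0}(x,r)\big),
\]
but since $\partial_t\,dV_{g(t)}=-R\,dV_{g(t)}$ with $R\ge 0$, the volume form only shrinks, so $\mathrm{Vol}_{g(t_0)}(B_{g_0}(x,r))\le \mathrm{Vol}_{g_0}(B_{g_0}(x,r))$, which runs the wrong way; the chain of inequalities does not close. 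To produce a \emph{lower} bound one must control the loss $\int_0^{t_0}\!\int_{B_{g_0}(x,r)}R\,dV_{g(t)}\,dt$, and the pointwise bound $R\lesssim a/t$ is useless here because $\int_0^{t_0}t^{-1}\,dt=\infty$, so your parenthetical ``direct comparison between the evolving volume forms'' does not work. Your other option, ``monotonicity of the asymptotic volume ratio along Ricci flows with $\Ric\ge 0$'', is not a theorem that applies as stated in this setting: the known monotonicity/rigidity results for the asymptotic volume ratio assume bounded curvature on the time interval, which fails here as $t\to 0^+$. Filling that gap would amount to proving the very estimate that is missing.

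The paper's proof supplies the missing ingredient via the scaling trick together with Petrunin's integral curvature bound (as used in \cite{Cabezas-RivasWilking2011}, citing \cite{Petrunin}): for $\tilde g(s)=r^{-2}g(r^2 s)$, which again has nonnegative (complex) sectional curvature, one has
\[
V_p(\tilde g(s),1)-V_p(\tilde g(0),1)\ \ge\ -c_n s,
\]
and since $V_p(\tilde g(0),1)=r^{-2n}V_p(g_0,r)\ge v_0>0$, taking $s=t_0/r^2$ for $r$ large gives $r^{-2n}V_p(g(t_0),r)\ge C_1>0$. This scale-invariant volume estimate is exactly what your argument lacks.

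A minor point: the case distinction between strictly positive and merely nonnegative bisectional curvature is unnecessary. The paper applies \cite{ChauTam2008} directly under nonnegative holomorphic bisectional curvature together with bounded curvature and maximum volume growth; no strong maximum principle, holomorphic splitting, or induction on complex dimension is needed.
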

 For \K surface, sectional curvature being nonnegative is equivalent to complex sectional curvature being nonnegative \cite{Zheng1995}. Hence in particular, any complete  \K surface with nonnegative sectional curvature with maximum volume growth is biholomorphic to $\C^2$.
  We should mention  that recently Liu \cite{Liu2015} proves that   a complete noncompact \K manifold with nonnegative holomorphic bisectional curvature and with maximum volume growth is biholomorphic to an affine algebraic variety, generalizing the result of Mok \cite{Mok1984}. Moreover, if the volume of geodesic balls are close  to the Euclidean balls with same radii, then the manifold is biholomorphic to $\C^n$.

By Theorem \ref{t-intro-1}, we know that from the solution constructed by   Simon \cite{Simon2002} one can construct a solution to the \KR flow if $g_0$ is \K. In view of the conjecture of Yau, we would like to know that if the nonnegativity of holomorphic bisectional curvature will be preserved by the solution $g(t)$ of the \KR flow. The second result in this paper is the following:

\begin{thm}\label{t-intro-2} There is $0< a(n)<1$ depending only on $n$ such that if $g(t)$ is a complete solution of  \KRF on $M\times[0,T]$ with $|\Rm(g(t))|_{g(t)}\leq \frac{a}{t}$, where $M$ is an $n$-dimensional non-compact complex manifold. If $g(0)$ has nonnegative holomorphic bisectional curvature, then so does $g(t)$ for all $t\in[0,T]$.
\end{thm}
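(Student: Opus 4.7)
The plan is to run a tensor maximum principle on the bisectional curvature along the \KRF, following the template of Bando--Mok and Shi \cite{Shi1997}, but carefully tailored to the unbounded curvature bound $|\Rm| \le a/t$ near $t = 0$. By Theorem \ref{t-intro-1}, $g(t)$ is K\"ahler on $[0, T]$, so the curvature tensor evolves by
\[
(\partial_t - \Delta) R_{i\bar j k\bar l} = N(R)_{i\bar j k\bar l} - \tfrac12\bigl(R_{i\bar p}R_{p\bar j k\bar l} + \cdots\bigr),
\]
where $N(R)$ is Mok's quadratic bracket. The algebraic input is the Bando--Mok null-direction inequality: at a point and in complex directions $(X, Y)$ where $R(X, \bar X, Y, \bar Y) = 0$ realizes the infimum of bisectional curvature and $R \ge 0$ elsewhere, $N(R)(X, \bar X, Y, \bar Y) \ge 0$ and the Ricci corrections vanish on the null directions. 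In the bounded-curvature setting this yields preservation of $R \ge 0$ by the tensor maximum principle, extended to noncompact $M$ by Shi's cutoff trick.

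The quantitative step is a perturbation argument. For small $\delta > 0$, introduce
\[
Q^{\delta}_{i\bar j k\bar l}(t) := R_{i\bar j k\bar l}(t) + \delta F(t)\bigl(g_{i\bar j}g_{k\bar l} + g_{i\bar l}g_{k\bar j}\bigr),
\]
with $F(t) > 0$ to be chosen. At a hypothetical first zero of the bisectional curvature of $Q^{\delta}$, at $(x_0, t_0)$ in directions $(X, Y)$, Mok's identity applied to $Q^{\delta}$ --- which has nonnegative bisectional curvature on $(0, t_0]$ by assumption --- gives $N(Q^{\delta})(X, \bar X, Y, \bar Y) \ge 0$. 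Expanding $N(Q^{\delta}) = N(R) + O(|\Rm|\delta F + \delta^2 F^2)$, combining with the Ricci-correction terms (of size $O(|\Rm|\delta F)$ since $R(X,\bar X, Y,\bar Y) = -2\delta F|X|^2|Y|^2$ at the zero), and estimating the time-derivative of $\delta F\cdot g\wedge g$ via $\partial_t g = -\Ric$ with $|\Ric| \le a/t$, one obtains
\[
(\partial_t - \Delta) Q^{\delta}(X, \bar X, Y, \bar Y)\big|_{(x_0, t_0)} \ge 2\delta\Bigl(F'(t_0) - C(n)\frac{a\, F(t_0)}{t_0}\Bigr)|X|^2|Y|^2.
\]
This is nonnegative provided $F'(t)/F(t) \ge C(n) a/t$, which is solved by $F(t) = t^{C(n) a}$. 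Choosing $a(n) := 1/C(n)$ makes $F$ sublinear near $t = 0$, and after establishing $Q^{\delta} \ge 0$ we let $\delta \to 0$ to conclude.

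Initialization is delicate. Since the curvature bound allows $R(t)$ to drop to $-a/t$ while $\delta F(t) = \delta t^{C(n)a}$ is much smaller, we cannot directly insist $Q^{\delta} \ge 0$ throughout $(0, T]$ starting from $Q^{\delta}(0) = R(0) \ge 0$. I would instead apply the pointwise argument on intervals $[s, T]$ for small $s > 0$ (where $|\Rm(g(s))| \le a/s$ is bounded, so Shi's cutoff machinery from \cite{Shi1997} applies directly) and close the gap by a rescaling / Cheeger--Gromov compactness argument: along any sequence of putative counterexamples at times $t_n \to 0$, pass to a limit of the rescaled flows $\tilde g_n(\tau) := t_n^{-1} g(t_n \tau)$, whose initial metric inherits nonnegative bisectional curvature from $g_0$ and for which the perturbation maximum principle above applies cleanly on $[0, \infty)$, producing the desired contradiction.

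The main obstacle is exactly this initialization step. The discontinuity of $R$ at $t = 0$ caused by the $a/t$ blowup means the hypothesis $R(0) \ge 0$ cannot be used as the initial barrier in the ordinary sense, and must be transferred to $t > 0$ via a rescaled-flow compactness argument --- or equivalently via a barrier $F$ fine enough to interpolate between $-a/t$ near $t = 0$ and sharp nonnegativity for $t > 0$. Closing this gap is precisely what forces the dimensional smallness constraint $a \le a(n) = 1/C(n)$, with $C(n)$ the constant entering the Mok bracket perturbation estimate together with the Ricci corrections. By comparison, handling the noncompactness of $M$ is routine once the pointwise differential inequality is in hand.
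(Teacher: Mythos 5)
Your overall strategy --- a perturbed tensor maximum principle built on Mok's null-vector condition, with the smallness of $a(n)$ forced by balancing the perturbation term against the $|\Rm|\le a/t$ bound --- is the same as the paper's, and your pointwise computation at a first zero is essentially the paper's. The genuine gap is exactly where you flag it: the initialization at $t=0$. Your perturbation $\delta F(t)\,(g_{i\bar j}g_{k\bar l}+g_{i\bar l}g_{k\bar j})$ with $F(t)=t^{C(n)a}$ vanishes at $t=0$, so $Q^{\delta}(0)=R(0)\ge 0$ is not a strict barrier and a ``first zero'' can occur immediately; and your proposed repair via the rescaled flows $\tilde g_n(\tau)=t_n^{-1}g(t_n\tau)$ does not close it. The bound $a/\tau$ is scale-invariant, so the rescaled flows satisfy no uniform curvature bound as $\tau\to0$, and you have no injectivity-radius lower bound with which to extract a Cheeger--Gromov limit; even if a limit existed for $\tau>0$, attaching the initial condition at $\tau=0$ to that limit is the very problem you started with, so the argument is circular. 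Likewise, running Shi's bounded-curvature argument on $[s,T]$ requires knowing $R(s)\ge0$, which is the conclusion rather than a hypothesis.

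The paper's resolution is simpler and avoids compactness entirely. Since $g(t)$ is by hypothesis smooth on the \emph{closed} slab $M\times[0,T]$, on any compact set $K$ the curvature is smooth up to $t=0$, so $R(X,\bar X,Y,\bar Y)(x,t)\ge -D_K\,t$ for unit vectors and $x\in K$: there is no discontinuity at $t=0$, only the absence of a bound uniform in $x$. One then works with $A=t^{-1/2}\Psi(x)\,R+\varepsilon B$, where $\Psi$ is a spatial cutoff and $B$ is the $g\wedge g$--type tensor with a \emph{constant} coefficient $\varepsilon$. The cutoff forces $\inf A_{X\bar X Y\bar Y}>0$ outside a compact set, while on $K$ one has $t^{1/2}A_{X\bar XY\bar Y}\ge -D_K\,t+\varepsilon t^{1/2}>0$ for $t$ small, so any first zero occurs at an interior point and at a time $t_0\ge T_0>0$, where the null-vector computation (together with $Q(B)\le0$ and the relation $\Psi R=-\varepsilon t_0^{1/2}B$ at the zero) produces the contradiction for $a$ small. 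If you replace your $F(t)=t^{C(n)a}$ scheme by such a barrier that is strictly positive for $t>0$ on compact sets and exploit the local smoothness of $g(t)$ at $t=0$, your argument goes through; as written, the initialization step is missing and the compactness substitute does not supply it.
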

We should mention that in \cite{YangZheng2013}, Yang and Zheng proved that the nonnegativity of bisectional curvature is preserved under the \KR flow for $U(n)$ invariant solution on $\C^n$ without any condition on the bound of the curvature.

By refining the estimates in \cite{Simon2002}, one can prove that if $\e(n)>0$ is small in the result of Simon, then curvature   of the solution of the $h$-flow will be bounded by $a/t$ with $a$ small. Hence as a corollary to the theorem, using \cite{ChauTam2008} again, we have:
\begin{cor}\label{c-intro-2} There is $\e(n)>0$, depending only on $n$. Suppose $(M^n,g_0)$ is a complete noncompact \K manifold with complex dimension $n$ with nonnegative holomorphic bisectional curvature with maximum volume growth. Suppose there is a Riemannian metric $h$ on $M$ with bounded curvature such that   $(1+\e(n))^{-1}h\le g_0\le (1+\e(n))h$. Then $M$ is biholomorphic to $\C^n$.
\end{cor}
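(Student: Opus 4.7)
\medskip

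\noindent\textbf{Proof plan for Corollary \ref{c-intro-2}.} The overall strategy is to evolve $g_0$ by Simon's $h$-flow into a K\"ahler-Ricci flow with the exact estimates needed to apply Theorems \ref{t-intro-1} and \ref{t-intro-2}, and then invoke the Chau--Tam uniformization result \cite{ChauTam2008} at positive time. Since $g_0$ and $h$ are $(1+\e)$-equivalent and $h$ has bounded curvature, Simon's theorem from \cite{Simon2002} produces a short-time solution $g(t)$ of the $h$-flow on $M\times[0,T]$ (with $T>0$ depending on $n$ and the curvature bound of $h$) satisfying the $\ep$-equivalence
\[
(1+C(n)\e)^{-1}h \le g(t)\le (1+C(n)\e)h
\]
together with a Shi-type decay estimate $|\Rm(g(t))|_{g(t)}\le C(n,h)/t$.

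\medskip

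\noindent The first key technical point is to refine Simon's interior estimate so that the constant $C$ in the curvature bound can be made as small as desired by shrinking $\e$. The plan is to revisit Simon's Bernstein-type argument for $|\Rm|^2$ along the $h$-flow: the quantity $t\,|\Rm(g(t))|_{g(t)}$ satisfies a parabolic inequality whose reaction term is controlled by the $C^0$-closeness of $g(t)$ to $h$ and by the bound on $|\Rm(h)|_h$. A careful tracking of constants in that argument (combining a maximum-principle bound on $t|\Rm|^2$ with an iteration absorbing the contribution of $|\Rm(h)|$) shows that after possibly shrinking $T=T(n)$ we may guarantee
\[
|\Rm(g(t))|_{g(t)}\le \frac{a(n)}{t},\qquad 0<t\le T,
\]
where $a(n)$ is the dimensional constant appearing in Theorem \ref{t-intro-2}, provided $\e\le \e(n)$ is small enough. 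I expect this refinement, rather than the Chau--Tam input, to be the main technical obstacle.

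\medskip

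\noindent Once this sharp decay is in place, the proof is essentially a concatenation of the results in the paper. Since $g_0$ is K\"ahler and the curvature satisfies $|\Rm(g(t))|_{g(t)}\le a(n)/t\le a/t$, Theorem \ref{t-intro-1} shows that $g(t)$ is K\"ahler for every $t\in[0,T]$; in particular, $g(t)$ is a solution of the K\"ahler-Ricci flow with the same curvature decay. Because $g_0$ has nonnegative holomorphic bisectional curvature, Theorem \ref{t-intro-2} then applies with exactly the constant $a(n)$ produced in the previous step and gives that $g(t)$ has nonnegative holomorphic bisectional curvature for all $t\in(0,T]$.

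\medskip

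\noindent Fix any $t_0\in(0,T]$. The metric $g(t_0)$ is K\"ahler, has bounded curvature (by the bound $a(n)/t_0$), and has nonnegative holomorphic bisectional curvature. Moreover, because $g(t_0)$ is uniformly equivalent to $h$ and hence to $g_0$, the maximum volume growth of $(M,g_0)$ is transferred to $(M,g(t_0))$: there exists $c>0$ with $\mathrm{Vol}_{g(t_0)}(B_{g(t_0)}(x,r))\ge c\,r^{2n}$ for all $r\ge 1$ and some (any) fixed basepoint. Thus $(M,g(t_0))$ satisfies the hypotheses of the Chau--Tam theorem \cite{ChauTam2008}, which concludes that $M$ is biholomorphic to $\C^n$. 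Choosing $\e(n)$ as the smaller of the threshold required to run Simon's refined estimate and to apply Theorem \ref{t-intro-2} gives the corollary.
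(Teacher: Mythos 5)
Your overall plan — refine Simon's $h$-flow estimates so that the curvature decays like $a(n)/t$ with $a(n)$ the small dimensional constant of Theorem \ref{t-intro-2}, apply Theorems \ref{t-intro-1} and \ref{t-intro-2} to get a K\"ahler solution with nonnegative bisectional curvature and bounded curvature at positive time, and invoke Chau--Tam — is the same as the paper's, and you correctly identify that the refinement of Simon's estimates is the main technical work. There are, however, two places where your proposal differs from the paper's proof in ways that matter.

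First, the refinement step. You propose a Bernstein/maximum-principle argument directly on $t\,|\Rm(g(t))|^2_{g(t)}$ along the $h$-flow, with reaction controlled by the $C^0$-closeness to $h$ and by $|\Rm(h)|$. As stated this does not quite work: $C^0$-closeness of $g(t)$ to $h$ gives no pointwise control of $\Rm(g(t))$, and the evolution of $\Rm(g)$ under the DeTurck (i.e., $h$-) flow carries additional Lie-derivative terms. The paper instead re-runs Simon's Bernstein estimates on the quantities $|\tn g|^2$ and $|\tn^2 g|^2$ (covariant derivatives with respect to the fixed background $h$), showing in Lemmas \ref{l-d1-estimate} and \ref{l-d2-estimate} that for any $\a>0$ one can choose $\e(n,\a)$ small enough that $|\tn g|^2\le\a/t$ and $|\tn^2 g|^2\le\a^2/t^2$ on a time interval depending only on $n,\a$, and then uses the algebraic identity $|\Rm(g)|\le c(n)\bigl(|\Rm(h)|+|\tn g|^2+|\tn^2 g|\bigr)$ to convert these into the desired curvature bound. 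Your scheme could plausibly be made to work, but the computation it would require is essentially equivalent, and the way it is phrased it skips the part that actually makes Simon's constants shrink, namely the weighted quantity $\varphi f_1^2$ with $\varphi$ a power-$m$ function of the eigenvalues of $g$ against $h$, where $m$ is sent to infinity as $\e\to 0$.

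Second, and more seriously, the transfer of maximum volume growth. You claim that $g(t_0)$ is uniformly equivalent to $h$ and hence to $g_0$, and conclude that the volume growth transfers. This is not justified. The $h$-flow solution $\bar g(t)$ is indeed uniformly equivalent to $h$, but the Ricci flow $g(t)$ is obtained by pulling $\bar g(t)$ back through the DeTurck diffeomorphisms $\varphi(t)$, and the available estimates (the vector field $W$ scales like $t^{-1/2}$, so $|\tn W|$ scales like $t^{-1}$) give a bounded displacement for $\varphi(t)$ but do not give a uniform bound on $|d\varphi(t)|$. Consequently $g(t_0)=\varphi(t_0)^*\bar g(t_0)$ need not be uniformly equivalent to $h$. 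Moreover, the curvature bound $|\Rm(g(t))|\le a/t$ is not time-integrable, so one cannot deduce uniform equivalence of $g(t_0)$ and $g_0$ from the Ricci flow equation either (this is exactly why Corollary \ref{c-intro-3}, where the decay is $t^{-n/p}$ with $n/p<1$, can use the uniform-equivalence route but the present corollary cannot). The paper instead uses that $\varphi(t)$ preserves volume, that $\bar g_R(t)$ is uniformly equivalent to $h$, and that $\varphi(t)$ has bounded displacement, to conclude
\[
V_{\bar g_R(t)}(x_0,r)\;=\;V_{g_R(t)}\bigl(\varphi^{-1}_t(x_0),r\bigr)\;\le\;V_{g_R(t)}(x_0,r+C_2),
\]
which yields the lower volume bound for $g(t)$ directly. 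You should replace the uniform-equivalence claim with this volume/displacement argument (or supply a derivative bound on the DeTurck diffeomorphism, which is not available here).

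A smaller omission: the paper produces the Ricci flow from the $h$-flow only after approximating $g_0$ by metrics $g_{R,0}$ that agree with $h$ outside a ball of radius $2R$, running the flows, pulling back, and extracting a limit as $R\to\infty$; this is where completeness of the limiting Ricci flow is established. Your proposal says nothing about how to obtain a \emph{complete} Ricci flow from the non-compact $h$-flow, which is a necessary ingredient before Theorems \ref{t-intro-1}, \ref{t-intro-2}, and Chau--Tam can be applied.
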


By a result of Xu \cite{Xu2013}, we also have the following corollary which says that the condition that the curvature is bounded in the uniformization result in \cite{ChauTam2008} can be relaxed to the condition that the curvature is bounded in some integral sense. Namely, we have:

\begin{cor}\label{c-intro-3} Let   $(M^n,g_0)$ is a complete noncompact \K manifold with complex dimension $n\ge2$ with nonnegative holomorphic bisectional curvature with maximum volume growth. Suppose there is $r_0>0$ and there is $C>0$ such that
$$
\lf(\aint_{B_x(r_0)}|\Rm|^p\ri)^\frac1p\le C
$$
for some $p>n$ for all $x\in M$. Then $M$ is biholomorphic to $\C^n$.
\end{cor}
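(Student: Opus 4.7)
The plan is to reduce Corollary \ref{c-intro-3} to Corollary \ref{c-intro-2}. The latter requires producing a Riemannian metric $h$ on $M$ with bounded curvature satisfying $(1+\e(n))^{-1} h \le g_0 \le (1+\e(n)) h$, where $\e(n)$ is the dimensional constant from Corollary \ref{c-intro-2}. All the K\"ahler-geometric hypotheses (nonnegative holomorphic bisectional curvature and maximum volume growth) are already present for $g_0$, so the only new point to establish is the existence of such a background metric $h$.

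To produce $h$, I would invoke the short time existence theorem of Xu \cite{Xu2013}. Since $g_0$ has nonnegative holomorphic bisectional curvature together with maximum volume growth (giving a uniform non-collapsing bound on balls of fixed size) and satisfies the uniform $L^p$ curvature bound on balls of radius $r_0$ with $p > n$, Xu's result produces a smooth short time solution $g(t)$, $t \in (0, T]$, of the Ricci flow starting at $g_0$, with curvature bounded at each positive time, and with $g(t) \to g_0$ uniformly on $M$ in the $C^0$ sense as $t \to 0^+$. Given this uniform convergence, one picks $t_0 \in (0, T]$ small enough to ensure
\[
(1+\e(n))^{-1} g(t_0) \le g_0 \le (1+\e(n)) g(t_0)
\]
on all of $M$, and then sets $h := g(t_0)$. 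This $h$ has bounded curvature (since $|\Rm(g(t_0))|_{g(t_0)}$ is a fixed finite constant) and is $C^0$-close to $g_0$ as required, so Corollary \ref{c-intro-2} applies and yields that $M$ is biholomorphic to $\C^n$.

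The main obstacle is the uniform $C^0$ closeness of $g(t_0)$ to $g_0$, rather than the mere existence of a short time solution with bounded curvature. A pointwise bound of the form $|\Rm(g(t))|_{g(t)} \le a/t$ alone would only give $|\Ric(g(t))| \le c(n) a/t$, whose time integral diverges logarithmically and hence fails to control $|g(t) - g_0|_{g_0}$ uniformly. It is precisely the $L^p$ integral curvature hypothesis, as exploited by Xu (through a Ricci-DeTurck / mollification style argument combined with non-collapsing), that upgrades the pointwise bound to the $C^0$ control of $g(t)$ against $g_0$ needed to fit into the hypothesis of Corollary \ref{c-intro-2}.
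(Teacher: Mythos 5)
Your reduction is correct, but it is routed differently from the paper's. The paper applies Xu's theorem to get a Ricci flow $g(t)$ from $g_0$ with $|\Rm(g(t))|\le Ct^{-n/p}$; since $n/p<1$ this is $\le a(n)/t$ for $t$ small, so Theorems \ref{t-intro-1} and \ref{t-intro-2} apply \emph{directly to Xu's flow}, giving that $g(t)$ is a K\"ahler metric with bounded nonnegative bisectional curvature for $t>0$; the integrability of $t^{-n/p}$ then shows $g(t)$ is uniformly equivalent to $g_0$, hence still has maximum volume growth, and \cite{ChauTam2008} is applied to $g(t_0)$. You instead use Xu's flow only to manufacture a background metric $h=g(t_0)$ of bounded curvature that is $(1+\e)$-close to $g_0$, and then invoke Corollary \ref{c-intro-2} as a black box. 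This is logically sound: the needed $C^0$-closeness follows for exactly the same reason the paper gets uniform equivalence, namely $|\partial_t g|=2|\Ric|\le C t^{-n/p}$ with $n/p<1$ is integrable in $t$, so $e^{-\delta(t)}g_0\le g(t)\le e^{\delta(t)}g_0$ with $\delta(t)\to0$ — I would state it this way rather than appealing vaguely to the DeTurck/mollification mechanism, since the clean source of the $C^0$ control is the improved decay exponent $n/p<1$, not the method of proof of Xu's theorem. The trade-off is efficiency: your route, once unwound, discards Xu's flow and reconstructs a second Ricci flow from $g_0$ via the $h$-flow machinery of Section \ref{s-hflow} (with the extra care there about making the constant in the $C/t$ bound small), whereas the paper exploits the fact that Xu's flow already satisfies the hypotheses of Theorems \ref{t-intro-1} and \ref{t-intro-2} and of \cite{ChauTam2008}, so no second flow is needed. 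Both arguments rest on the same two inputs (Xu's existence theorem with the $t^{-n/p}$ bound, and the preservation results of this paper), so the difference is one of packaging rather than substance.
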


The paper is organized as follows: in Section \ref{s-max} we prove a maximum principle and apply it in Section \ref{s-kahler} to prove Theorem \ref{t-intro-1}. In Section \ref{s-bisectional} we prove Theorem \ref{t-intro-2}. In Section \ref{s-hflow}, we will construct solution to the \KR flow with nonnegative holomorphic bisectional curvature through the $h$-flow.

{\it Acknowledgement}: The second author would like to thank Albert Chau for some usefully discussions and for bringing our attention to the results in \cite{Xu2013}.

\section{A maximum principle}\label{s-max}

 In this section, we will prove a maximum principle, which will be used in the proof of Theorem \ref{t-intro-1}.

 Let $(M^n,g_0)$ be a complete noncompact Riemannian manifold. Let $g(t)$ be a smooth complete solution to the Ricci flow on $M\times[0,T]$, $T>0$ with $g(0)=g_0$, i.e.

  \be\label{e-Riccifloweq}
  \left\{
    \begin{array}{ll}
      \ppt g\,\,=-2\Ric,&\hbox{on $M\times[0,T]$;} \\
      g(0)=g_0.&\hbox{ }
    \end{array}
  \right.
  \ee
Let $\Gamma$ and $\bar \Gamma$ be the Christoffel symbols of $g(t)$ and $\bar g=g(T)$ respectively. Let $A=\Gamma-\bar\Gamma$. Then $A$ is a $(1,2)$ tensor.
In the following, lower case $c, c_1, c_2,\dots$ will denote positive constants depending only on $n$.

\begin{lma} \label{l-connection} With the above notation and assumptions, suppose the curvature satisfies $|\Rm(g(t)|_{g(t)}\le at^{-1}$ for some positive constant $a$.  Then there is a constant $c=c(n)>0$, such that
 \begin{enumerate}
 \item [(i)]
\bee\label{e-metric}
\ \lf(\frac{T}{t}\ri)^{-ca}\bar g\le g(t)\le  \lf(\frac{T}{t}\ri)^{ca}\bar g;
\eee

   \item [(ii)] $|\nabla \Rm|\le   Ct^{-\frac32}$ for some constant $C=C(n,T,a)>0$ depending only on $n, T, a$;
   \item [(iii)]  $$|A|_{\bar g}\le Ct^{-\frac12-ca},$$
       for some constant $C=C(n,T,a)>0$ depending only on $n, T$ and $a$.
 \end{enumerate}
\end{lma}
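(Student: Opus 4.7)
The plan is to establish the three assertions in order, using (i) and (ii) to bootstrap to (iii).

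For (i), I would fix a nonzero tangent vector $X\in T_pM$ and set $f(t)=g(t)(X,X)$. The Ricci flow equation gives $f'(t)=-2\Ric_{g(t)}(X,X)$, so
\[
\lf|\frac{d}{dt}\log f(t)\ri|\le 2\,\frac{|\Ric_{g(t)}(X,X)|}{f(t)}\le 2|\Ric|_{g(t)}\le c_1(n)|\Rm|_{g(t)}\le \frac{c_1 a}{t}.
\]
Integrating from $t$ to $T$ yields $|\log(f(T)/f(t))|\le c_1 a\log(T/t)$, i.e.\ $(T/t)^{-c_1 a}\bar g(X,X)\le g(t)(X,X)\le (T/t)^{c_1 a}\bar g(X,X)$, proving (i). For (ii), I would apply Shi's interior gradient estimate to the Ricci flow on each parabolic cylinder $M\times[t/2,t]$, on which the curvature is uniformly bounded by $2a/t$; Shi's estimate then gives $|\nabla\Rm|(x,t)\le C(n)(2a/t)/\sqrt{t/2}\le C(n,a)\,t^{-3/2}$.

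For (iii), observe first that $A(T)=\Gamma(T)-\bar\Gamma=0$, since $\bar g=g(T)$. The standard formula for the evolution of Christoffel symbols under Ricci flow is
\[
\p_t\Gamma_{ij}^k=-g^{kl}\lf(\nabla_i R_{jl}+\nabla_j R_{il}-\nabla_l R_{ij}\ri),
\]
so at each point $|\p_t A|_{g(t)}\le c_2(n)|\nabla\Rm|_{g(t)}\le c_3(n,a)\,t^{-3/2}$ by (ii). To convert to the $\bar g$-norm I invoke (i): since $A$ is a $(1,2)$-tensor, applying the comparisons $(T/t)^{\pm c_1 a}$ to the three index slots yields $|\p_t A|_{\bar g}\le (T/t)^{c'a}|\p_t A|_{g(t)}$ for some dimensional $c'=c'(n)$. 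Since $A(T)=0$, integrating in time over $[t,T]$ gives
\[
|A(t)|_{\bar g}\le\int_t^T |\p_s A|_{\bar g}\,ds\le c_3 T^{c'a}\int_t^T s^{-\frac32-c'a}\,ds\le C(n,T,a)\,t^{-\frac12-c'a},
\]
which is (iii) after absorbing $T^{c'a}$ and the integration constant into $C$.

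The main obstacle is the careful bookkeeping in (iii): one must verify that the exponent $c'a$ arising in the norm-conversion step depends only on $n$ (not on $T$), and that it survives the time integration in precisely the form $-\tfrac12-ca$ with $c=c(n)$. Parts (i) and (ii) are essentially standard, the first being a direct ODE estimate for the evolving metric applied to fixed vectors, the second a textbook application of Shi's derivative estimate on a shrinking parabolic cylinder.
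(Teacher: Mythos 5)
Your proof is correct and follows essentially the same route as the paper: (i) is the pointwise ODE estimate for $g(t)(X,X)$ under Ricci flow, (ii) is Shi's derivative estimate (which the paper simply cites from Shi's 1989 paper and Chow--Knopf, while you sketch the parabolic-rescaling argument), and (iii) integrates the standard evolution equation $\p_t A_{ij}^k=-g^{kl}(\n_i R_{jl}+\n_j R_{il}-\n_l R_{ij})$ from $T$ down to $t$ using $A(T)=0$, exactly as the paper does (the paper phrases it as a bound on $\p_t|A|^2_{\bar g}$, but this is the same computation). Your norm-conversion bookkeeping in (iii) is sound: for a $(1,2)$-tensor the $\bar g$-to-$g(t)$ comparison introduces $(T/t)^{c'a}$ with $c'$ a dimensional constant, and since the antiderivative of $s^{-3/2-c'a}$ is dominated at the lower endpoint, the integral gives $Ct^{-1/2-c'a}$ as required.
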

\begin{proof} (i) follows from the Ricci flow equation.

(ii) is a result in \cite{Shi1989}, see also \cite[Theorem 7.1]{ChowKnopf}.

To prove (iii), in local coordinates:
\bee
\frac{\p }{\p t}A_{ij}^k=-g^{kl}\lf(\nabla_i R_{jl}+\nabla_jR_{il}-\nabla_l R_{ij}\ri).
\eee

At a point where $\bar g_{ij}=\delta_{ij}$ such that $g_{ij}=\lambda_i\delta_{ij}$
\bee
\begin{split}
\lf|\frac{\p }{\p t}|A|_{\bar g}^2\ri|\le&C_1(n,T) t^{-c_1a}  |\nabla\Ric|_{\bar g}|A|_{\bar g}\\
\le& C_2(n,T,a)t^{-c_2a-\frac32}|A|_{\bar g}\\
\end{split}
\eee
for some constants $C_1, C_2$ depending only on $n, T, a$ and $c_1, c_2$ depending only on $n$. From this the result follows.
\end{proof}

Under the assumption of the lemma, since $g(T)$ is complete and the curvature of $\bar g=g(T)$ is bounded by $a/T$,  we can find a smooth function  $\rho$ on $M$ such that
\be\label{e-exhaustionfn}
d_{\bar g}(x,x_0)+1 \le \rho(x)\le C'(d(x,x_0)+1);\ |\bar\nabla \rho|_{\bar g}+|\bar\nabla^2 \rho|_{\bar g}\le C'
\ee
for some $C'>1$, where  $\bar \nabla $ is covariant derivative with respect to $\bar g$ and $C'>0$ is a constant depending on $n$ and $a/T$,  see \cite{Shi1997,Tam2010}.

\begin{lma}\label{l-cutoff} With the same assumptions and notation as in the previous lemma, $\rho(x)$ satisfies
\bee
|\nabla \rho|\le   C_1  t^{-ca}
\eee
and
\bee
|\Delta \rho|\le C_2 t^{-\frac12-ca}
\eee
where $C_1$, $C_2$ depending only on $n, T, a$ and $c>0$ depending only on $n$. Here $\nabla$ and $\Delta$ are the covariant derivative and Laplacian of $g(t)$ respectively.
\end{lma}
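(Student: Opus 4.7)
The plan is to leverage Lemma \ref{l-connection} directly: part (i) lets me trade $\bar g$-norms for $g(t)$-norms at the cost of a factor $(T/t)^{ca}$, and part (iii) controls the connection difference $A = \Gamma - \bar\Gamma$, which governs the discrepancy between $\Delta$ and $\bar\Delta$. Since $\rho$ is fixed with $|\bar\nabla\rho|_{\bar g}+|\bar\nabla^2\rho|_{\bar g}\le C'$ by \eqref{e-exhaustionfn}, both desired bounds reduce to straightforward tensor manipulations.

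For the gradient bound, I would note that $\nabla\rho = \bar\nabla\rho = d\rho$ as a $1$-form, so
\bee
|\nabla\rho|^2_{g(t)} = g^{ij}(t)\,\p_i\rho\,\p_j\rho \le \lf(\frac{T}{t}\ri)^{ca} \bar g^{ij}\p_i\rho\,\p_j\rho \le (C')^2 \lf(\frac{T}{t}\ri)^{ca}
\eee
by Lemma \ref{l-connection}(i) and \eqref{e-exhaustionfn}. Adjusting the constant $c$ (the relevant exponent is really $ca/2$, which we absorb into a new $c=c(n)$), this yields $|\nabla\rho|\le C_1\,t^{-ca}$ with $C_1=C_1(n,T,a)$.

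For the Laplacian bound, I would use the standard identity expressing the difference of connections: since $\nabla_i\nabla_j\rho = \bar\nabla_i\bar\nabla_j\rho - A^k_{ij}\p_k\rho$, contracting with $g^{ij}(t)$ gives
\bee
\Delta\rho = g^{ij}\bar\nabla_i\bar\nabla_j\rho - g^{ij} A^k_{ij}\p_k\rho.
\eee
The first term is bounded by $|g^{-1}|_{\bar g}\cdot |\bar\nabla^2\rho|_{\bar g}\le C'(T/t)^{ca}$ via Lemma \ref{l-connection}(i). For the second term, working at a point where $\bar g_{ij}=\delta_{ij}$ and $g_{ij}=\lambda_i\delta_{ij}$ and estimating each factor in its $\bar g$-norm,
\bee
|g^{ij}A^k_{ij}\p_k\rho| \le \lf(\frac{T}{t}\ri)^{ca} |A|_{\bar g}\,|\bar\nabla\rho|_{\bar g} \le C \lf(\frac{T}{t}\ri)^{ca}\cdot t^{-\frac12 - ca} \le C_2\,t^{-\frac12 - ca}
\eee
using (i) and (iii) of Lemma \ref{l-connection} together with \eqref{e-exhaustionfn}. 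Since the first piece is $O(t^{-ca})$ and is dominated by the second as $t\to 0$, combining the two produces $|\Delta\rho|\le C_2\,t^{-\frac12 - ca}$ after enlarging $c$ and $C_2$ in terms of $n, T, a$.

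The only point requiring any care is the bookkeeping of exponents: each application of Lemma \ref{l-connection}(i) introduces a factor $(T/t)^{ca}$, and the exponent $c$ must be redefined finitely many times to absorb the cumulative powers appearing in the gradient and Laplacian estimates. There is no substantive obstacle—this lemma is a direct consequence of the previous one.
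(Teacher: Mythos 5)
Your proof is correct and follows essentially the same route as the paper: the gradient bound comes straight from Lemma \ref{l-connection}(i) and \eqref{e-exhaustionfn}, and the Laplacian bound comes from comparing $\Delta\rho$ with $\bar\Delta\rho$ via the connection-difference tensor $A$ and invoking parts (i) and (iii) of Lemma \ref{l-connection}. The only cosmetic difference is that the paper estimates $|\Delta\rho-\bar\Delta\rho|$ and then uses the bound on $\bar\Delta\rho$, whereas you bound the two pieces of $\Delta\rho$ directly; the content is identical.
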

\begin{proof} The first inequality follows from Lemma \ref{l-connection}(i). To estimate $\Delta \rho$, at a point where $\bar g_{ij}=\delta_{ij}$ and $g_{ij}$ is diagonalized, we have
\bee
\begin{split}
\lf|\Delta \rho-\bar\Delta\rho\ri|=& \lf|g^{ij}\nabla_i\nabla_j\rho-g_T^{ij}\ol\nabla_i\ol\nabla_j\rho\ri|\\
\le&\lf|g^{ij}\lf(\nabla_i\nabla_j-\ol\nabla_i\ol\nabla_j\ri)\rho\ri|+
\lf|\lf(g^{ij}-g_T^{ij}\ri)\ol\nabla_i\ol\nabla_j\rho\ri|\\
\le &|g^{ij}A_{ij}^k\rho_k|+C_3t^{-c_1a}\\
\le& C_4  t^{-\frac12-c_2a}
\end{split}
\eee
for some constants $C_3$, $C_4$ depending only on $n, T, a$, and $c_1, c_2$ depending only on $n$.
By the estimates of $\bar \Delta\rho$, the second result follows.
\end{proof}

\begin{lma}\label{l-maximum}
Let $(M^n,g)$ be a complete noncompact Riemannian manifold with dimension $n$ and let  $g(t)$ be a smooth complete solution of the Ricci flow on $M\times[0,T]$, $T>0$ such that the curvature satisfies $|\Rm|\le at^{-1}$ for some $a>0$.

Let $f\ge0$ be a smooth function on $M\times[0,T]$  such that
\begin{enumerate}
         \item[(i)] $$
\heat f\le \frac at f;
$$

         \item [(ii)] $\frac{\p^k f}{\p t^k}|_{t=0}=0$ for all $k\ge 0$;
         \item[(iii)] $\sup_{x\in M}f(x,t)\le Ct^{-l}$, for some positive integer $l$ for some constant $C$.
       \end{enumerate}
       Then $f\equiv0$ on $M\times[0,T]$.
\end{lma}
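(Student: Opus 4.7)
The plan is to reduce $f$ to a nonnegative heat subsolution and then run a maximum-principle argument with a barrier that exploits the infinite-order vanishing from hypothesis (ii). First, set $F(x,t) = t^{-a} f(x,t)$ on $M \times (0,T]$. A direct computation using (i) gives $(\partial_t - \Delta) F = t^{-a}(\partial_t - \Delta) f - (a/t) F \le (a/t) F - (a/t) F = 0$, so $F \ge 0$ is a smooth heat subsolution under $g(t)$. Hypothesis (iii) gives the growth bound $F \le C t^{-l-a}$, while (ii) combined with the smoothness of $f$ and Taylor expansion in $t$ implies that on any compact $K \subset M$ and any integer $N$, $\sup_{x \in K} f(x,t) \le C_{K,N}\, t^N$ for small $t$; hence $F(x,t) \to 0$ as $t \to 0^+$, uniformly on compact subsets of $M$.

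Next, using the exhaustion function $\rho$ and the estimates $|\nabla \rho|_{g(t)} \le C t^{-ca}$, $|\Delta_{g(t)}\rho| \le C t^{-1/2-ca}$ from Lemma \ref{l-cutoff}, I would construct a positive supersolution $\Phi(x,t) = \exp(A\rho(x) + H(t))$ of the heat equation under $g(t)$, with $H(t)$ chosen to dominate the contributions of $A|\Delta\rho|$ and $A^2 |\nabla\rho|^2$ so that $(\partial_t - \Delta)\Phi \ge 0$ on $M \times [\tau, T]$ for small $\tau > 0$. For any $\epsilon > 0$, the function $F - \epsilon\Phi$ is then a heat subsolution tending to $-\infty$ as $\rho(x) \to \infty$, so its supremum on $M \times [\tau, T]$ is attained at a finite point. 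By the strong maximum principle (applied to $F - \epsilon\Phi - \delta t$ and letting $\delta \to 0$), any positive supremum must occur on the time-slice $\{t = \tau\}$; moreover, the exponential growth of $\Phi$ in $\rho$ combined with $F \le C t^{-l-a}$ forces the location of this maximizer into a region $\{\rho \le R(\epsilon, \tau)\}$ whose radius grows only logarithmically in $1/\tau$. Combined with the compact-uniform convergence $\sup_{\{\rho \le R\}} F(\cdot, \tau) \to 0$ from the first step, this gives $\sup_M (F - \epsilon\Phi)(\cdot, \tau) \le 0$ for $\tau$ sufficiently small (depending on $\epsilon$), so $F \le \epsilon\Phi$ on $M \times [\tau, T]$; letting $\tau \to 0^+$ and then $\epsilon \to 0$ yields $F \equiv 0$, hence $f \equiv 0$.

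The main obstacle is the construction of the supersolution $\Phi$ together with the matching of its scales to those of $F$: since $|\Delta_{g(t)}\rho|$ is bounded only by $C t^{-1/2 - ca}$, the time integral $\int_0^t s^{-1/2 - ca}\, ds$ is finite only when $ca < 1/2$, so for general $a > 0$ one must instead integrate from $\tau > 0$, which is adequate for the argument since we only need to let $\tau \to 0^+$ at the end. A secondary delicate point is verifying that the compact-uniform decay constants $C_{K, N}$ produced by Taylor's theorem do not blow up faster than the barrier can absorb when the compact set $K = \{\rho \le R(\epsilon, \tau)\}$ grows with $\tau \to 0^+$; this follows from the smoothness of $f$ up to $t = 0$ and the local uniform boundedness of higher time-derivatives of $f$ on compact subsets of $M$, and the fact that $R(\epsilon,\tau)$ grows only logarithmically in $1/\tau$.
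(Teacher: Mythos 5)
There is a genuine gap, and it sits exactly where you flag a ``secondary delicate point.'' Hypotheses (ii) and (iii) give you only \emph{pointwise} infinite-order vanishing at $t=0$ plus a \emph{global} bound $f\le Ct^{-l}$; Taylor's theorem then yields $\sup_{K}f(\cdot,t)\le C_{K,N}t^{N}$ with $C_{K,N}=\frac{1}{N!}\sup_{K\times[0,T]}|\partial^N f/\partial t^N|$, and nothing in the hypotheses controls how $C_{K,N}$ grows with the compact set $K$. In your scheme the comparison on the initial slice $\{t=\tau\}$ must be verified on $K_{R}=\{\rho\le R(\epsilon,\tau)\}$ with $R(\epsilon,\tau)\sim A^{-1}\log\bigl(C/(\epsilon\tau^{l+a})\bigr)$, so $K_R$ grows as $\tau\to0$. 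If $\sup_{K_R}|\partial^N f/\partial t^N|$ grows, say, doubly exponentially in $R$, then $C_{K_{R(\tau)},N}\sim e^{\tau^{-c}}$ defeats every power $\tau^{N}$, and the logarithmic growth of $R$ in $1/\tau$ does not rescue the argument. This is precisely the difficulty the paper's proof is built to circumvent: its first (and main) step upgrades the global bound $f\le Ct^{-l}$ to a \emph{global} bound $f\le B_k t^k$ for every $k$, by a localized maximum principle for $\theta(t)\Phi(d(x,t)/r)\,t^{-k}f$ that uses the time-dependent distance, Perelman's barrier estimate for $\bigl(\tfrac{\partial_-}{\partial t}-\Delta\bigr)d(x,t)$, a cutoff raised to a power $\Phi=\phi^m$ (so that the error terms carry a factor $(\Phi F)^q$ with $q<1$), and the weight $\theta(t)=e^{-\alpha t^{1-\beta}}$; there only pointwise vanishing at $t=0$ is needed, to ensure continuity of $t^{-k}f$ up to $t=0$. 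Without this step (or a substitute producing spatially uniform decay of all orders), your reduction to the initial slice cannot be closed.

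A second, independent problem is that your supersolution degenerates for general $a$: with $|\Delta_{g(t)}\rho|\le Ct^{-1/2-ca}$ you need $H_\tau(t)\ge cA\int_\tau^t s^{-1/2-ca}\,ds$, which tends to $+\infty$ as $\tau\to0$ whenever $ca\ge\tfrac12$ (and $c$, $a$ are not at your disposal). Hence the conclusion $F\le\epsilon\,e^{A\rho+H_\tau(t)}$ on $M\times[\tau,T]$ becomes vacuous in the limit $\tau\to0$ at every fixed $t>0$. The paper sidesteps this by using, in its second step, the barrier $\eta(x,t)=\rho(x)\exp\bigl(\tfrac{2C_5}{1-b}t^{1-b}\bigr)$ with $b>1$, which is a supersolution that is bounded above by $\rho$ and vanishes at $t=0$; the comparison $F\le\epsilon(\eta+t)$ then closes because the first step has already furnished $F\le C_6t^2$ globally. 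You would need to restructure your argument along these lines rather than relying on an exponential-in-$\rho$ barrier anchored at $t=\tau$.
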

\begin{proof} We may assume that $T\le 1$. In fact, if we can prove that $f\equiv0$ on $M\times[0,T_1]$ where $T_1=\min\{1,T\}$, then it is easy to see that $f\equiv0$ on $M\times[0,T]$ because $f$ and the curvature of $g(t)$ are uniformly bounded on $M\times[T_1,T]$.

Let $p\in M$ be a fixed point, and let $d(x,t)$ be the distance between $p, x$ with respect to $g(t)$. By \cite{Perelman-1} (see also \cite[Chapter 18]{Chow07}),   for all $r_0$, if $d(x,t)> r_0$, then

\be
\frac{\p_-}{\p t}d(x,t)-\Delta_t d(x,t)\ge - C_0\lf(t^{-1}r_0+\frac1 {r_0}\ri)
\ee
in the barrier sense, for some $C_0=C_0(n, a)$ depending only on $n$ and $a$. Here
\be
\frac{\p_-}{\p t}d(x,t)=\liminf_{h\to 0^+}\frac{d(x,t)-d(x,t-h)}h.
\ee
The above inequality means  that for any $\e>0$, there is a smooth function $\sigma(y)$ near $x$ such that $\sigma(x)=d(x,t)$, $\sigma(y)\ge d(y,t)$ near $x$, such that $\sigma$ is $C^2$ and
\be
\frac{\p_-}{\p t}d(x,t)-\Delta_t \sigma(x)\ge -C_0\lf(t^{-1}r_0+\frac1 {r_0}\ri)-\e.
\ee
 In the following, we always take $\e=T^{-\frac12}$.

Let $f$ be as in the lemma. First we want to prove that for any integer $k>0$ there is a constant $B_k$ such that
\be
\sup_{x\in M}f(x,t)\le B_k t^k.
\ee

We may assume that $k>a$. Let $F=t^{-k}f$, then
\be
\heat F\le -\frac{k-a}t F\le 0.
\ee
Let $1\ge \phi\ge 0$ be a smooth function on $[0,\infty)$ such that
\bee
\phi(s)=\left\{
  \begin{array}{ll}
    1, & \hbox{if $0\le s\le 1$;} \\
    0, & \hbox{if $s\ge 2$,}
  \end{array}
\right.
\eee
and such that $-C_1\le \phi'\le 0$, $|\phi''|\le C_1$ for some $C_1>0$.
Let $\Phi=\phi^m$, where $m> 2$ will be chosen later. Then $\Phi=1$ on $[0,1]$ and $\Phi=0$ on $[2,\infty)$, $1\ge\Phi\ge0$, $-C(m)\Phi^{q}\le \Phi'\le 0$, $|\Phi''|\le C(m)\Phi^{q}$.
where $C(m)>0$ depends on $m$ and $C_1$, and $q=1-\frac 2m$.

For any $r>>1$, let $\Psi(x,t)=\Phi(\frac{d(x,t)}r)$. Let
$$\theta(t)=\exp(-\a t^{1-\b}),
$$
where $\a>0$, $0<\b<1$ will be chosen later.

We claim that one can choose $m$, $\a$ and $\b$ such that for all $r>>1$
$$
H(x,t)=\theta(t)\Psi(x,t)F\le C_2
$$
 on $M\times[0,T]$, where $C_2$ is independent of $r$. If the claim is true, then we have
 $F$ is bounded. Hence $f(x,t)\le B_k t^k$.

First note that $\Psi(x,t)$ has compact support in $M\times[0,T]$.
By assumption (ii) and the fact $f$ is smooth, we conclude that $H(x,t)$ is continuous on $M\times[0,T]$. Moreover, by (ii) again, $H(x,0)=0$. Suppose $H(x,t)$ attains a positive maximum at $(x_0,t_0)$ for some $x_0\in M$, $t_0>0$. Suppose $d(x_0,t_0)<r$, then there is a neighborhood $U$ of $x$ and $\delta>0$ such that $d(x,t)<r$ for $x\in U$ and $|t-t_0|<\delta$. For such $(x,t)$, $H(x,t)=\theta(t)F(x,t)$. Since $H(x_0,t_0)$ is a local maximum, we have
\bee
\begin{split}
0\le &\heat H\\
=& \theta(t)\lf(\theta'F+\heat F\ri)\\
<&0
\end{split}
\eee
which is a contradiction.

Hence we must have $d(x_0,t_0)\ge r$. If $r>>1$, then $r\ge T^\frac12$, and at $(x_0,t_0)$
\bee
\frac{\p_-}{\p t}d(x,t)-\Delta_t d(x,t)\ge - (2C_0+1) t^{-\frac12 }
\eee
in the barrier sense, by taking $r_0=t^\frac12$.
 Let $\sigma(x)$ be a barrier function near $x_0$. Let $\wt\Psi(x)=\Phi(\frac{\sigma(x)}r)$, and let
$$
\tilde H(x,t)=\theta(t)\tilde \Psi(x)F(x,t)
$$
which is defined near $x_0$ for all $t$. Moreover,
$$
\tilde H(x_0,t_0)=H(x_0,t_0)
$$
and
$$
\tilde H(x,t_0)\le H(x,t_0)
$$
near $x_0$ because $\sigma(x)\ge d(x,t_0)$ near $x_0$ and $\Phi'\le0$. Hence $\wt H(x,t_0)$ has a local maximum at $(x_0,t_0)$ as a function of $x$. So we have
\be
\nabla \tilde H(x_0,t_0)=0
\ee
 and
 \be
 \Delta \tilde H(x_0,t_0)\le0.
 \ee
  At $(x_0,t_0)$
 \be\label{e-max-1}
 \begin{split}
 0\ge&  \Delta \lf(\theta(t)\wt\Psi(x)F (x,t)\ri)\\
 =&\theta\wt\Psi \Delta F +\theta F  \Delta \wt\Psi+2\theta\la\nabla F ,\nabla \wt\Psi\ra \\
 =& \theta\wt\Psi  \Delta F+ \theta F   \lf( \frac1r\Phi'\Delta \sigma+\frac1{r^2}\Phi''|\nabla \sigma|^2\ri)-2\theta \frac{|\nabla \wt\Psi|^2}{\wt\Psi}F \\
 \ge &\theta \Phi  \Delta F+ \theta F   \lf( \frac1r\Phi'\Delta \sigma+\frac1{r^2}\Phi''|\nabla \sigma|^2\ri)- \frac2{r^2}\theta \frac{\Phi'^2}{\Phi}F\\
 \end{split}
 \ee
 where we have used the fact that $\sigma(x)\ge d(x,t_0)$ near $x_0$ and $\sigma(x_0)=d(x_0,t_0)$ so that $|\nabla \sigma(x_0)|\le 1$. $\Phi$ and the derivatives $\Phi'$ and $\Phi''$ are evaluated at $\frac{d(x_0,t_0)}r$.

 On the other hand,
 \bee
 \begin{split}
 0\le& \liminf_{h\to 0^+}\frac{H(x_0,t_0)-H(x_0,t_0-h)}h\\
 =&\theta'\Psi F+\theta\Psi \frac{\p}{\p t} F+\theta F\liminf_{h\to 0^+}\frac{-\Psi(x_0,t_0-h)+\Psi(x_0,t_0)}h.
 \end{split}
 \eee
 Now
 \bee
 \begin{split}
 -\Psi(x_0,t_0-h)+\Psi(x_0,t_0)=&  -\Phi(\frac{d(x_0,t_0-h)}r)+\Phi(\frac{d(x_0,t_0)}r)\\
 =& \frac1r\Phi'(\xi)(d(x_0,t_0)-d(x_0,t_0-h)),
 \end{split}
 \eee
 for some $\xi$ between $\frac1rd(x_0,t_0-h)$ and $\frac1r d(x_0,t_0)$
which implies
\bee
\begin{split}
\liminf_{h\to 0^+}\frac{-\Psi(x_0,t_0-h)+\Psi(x_0,t_0)}h\le&\limsup_{h\to 0^+}\frac{-\Psi(x_0,t_0-h)+\Psi(x_0,t_0)}h\\
=&  \frac1r\Phi' \frac{\p_-}{\p t}d(x_0,t)|_{t=t_0}
\end{split}
\eee
because $\Phi'\le 0$, where $\Phi'$ is     evaluated at $\frac1r d(x_0,t_0)$.   In the following, $C_i$ will denote positive constants independent of $\a, \b$. Combining the above inequality  with \eqref{e-max-1}, we have at $(x_0,t_0)$:
\bee
 \begin{split}
 0\le&\theta'\Phi F+\theta\Phi \frac{\p}{\p t} F+\theta F\frac1r\Phi' \frac{\p_-}{\p t}d(x_0,t_0)\\
 &-\theta\Psi  \Delta F- \theta F   \lf( \frac1r\Phi'\Delta \sigma+\frac1{r^2}\Phi''|\nabla \sigma|^2\ri)+ \frac2{r^2}\theta \frac{\Phi'^2}{\Phi}F\\
 \le &\theta'\Phi F+C_2\lf(t_0^{-\frac12}\Phi^q+ \Phi^{2q-1}\ri)\theta F\\
 \le &-\a(1-\b)t^{-\b}_0\theta \Phi F+C_3\theta \lf[t^{-\frac12}_0 t^{-(1-q)(k+l)}_0(\Phi F)^q+t^{-\frac12}_0 t^{-2(1-q)(k+l)}_0(\Phi F)^{2q-1}\ri]\\
 \le &\theta\bigg[-\a(1-\b)t^{-\b}_0  \Phi F+C_4t^{-\frac12-2(1-q)(k+l)}_0 \lf( (\Phi F)^q+ (\Phi F)^{2q-1}\ri)\bigg]
   \end{split}
 \eee
where $\Phi, \Phi', \Phi''$ are evaluated at $d(x_0,t_0)/r$.
Now first choose $m$ large enough depending only on $k, l$ so that $ \frac12+2(1-q)(k+l)=\b<1$. Then choose $\a$ such that $\a(1-\b)>2C_4$. Then one can see that we must have $\Phi F\le 1$. Hence $H=\theta\Phi F\le C$ at the maximum point of $H(x,t)$, where $C$ is a constant independent of $r$. This completes the proof of the claim.

Next, let $F=t^{-a}f$. Then

\bee
\heat F\le 0.
\eee

Let $\rho$ be the function in   Lemma \ref{l-cutoff}, we have
 $$
 |\Delta\rho|\le C_5t^{-b}
 $$
 for some $b>1$. Let $\eta(x,t)=\rho(x)\exp(\frac{2C_5}{1-b}t^{1-b}) $. Note that $\eta(x,0)=0.$

\bee
\begin{split}
\heat \eta=&\exp(\frac{2C_5}{1-b}t^{1-b})\lf(2C_5t^{-b}\rho- \Delta \rho\ri)\\
\ge&C_5t^{-b}\exp(\frac{2C_5}{1-b}t^{1-b}) \\
>&0.
\end{split}
\eee
where we have used the fact that $\rho\ge1$. Since $F\le C_6t^2$ in $M\times[0,T]$. In particular it is bounded. Then for any $\e>0$
\bee
\heat (F-\e\eta-\e t)< 0.
\eee
There is $t_1>0$ depending only on $\e, C_6$ such that $F-\e t<0$ for $t\le t_1$. For $t\ge t_1$, $F-\e\eta<0$ outside some compact set. Hence if $F-\e\eta-\e t>0$ somewhere, then there exist $x_0\in M$, $ t_0>0$ such that $F-\e \eta-\e t$ attains maximum. But this is impossible. So $F-\e\eta-\e t\le0$. Let $\e\to0$, we have $F=0$.
\end{proof}

\section{preservation of the K\"ahler condition}\label{s-kahler}

 In this section, we want to prove Theorem \ref{t-intro-1}  and give some applications. Recall Theorem \ref{t-intro-1} as follows:

 \begin{thm}\label{t-Kahler} If $(M^n,g_0)$ is a complete noncompact \K manifold with complex dimension $n$ and if $g(t)$ is a smooth complete solution to the Ricci flow \eqref{e-Riccifloweq} on $M\times[0,T]$, $T>0$, with $g(0)=g_0$ such that
 $$
 |\Rm(g(t))|_{g(t)}\le \frac at
 $$ for some $a>0$,
 then $g(t)$ is \K for all $0\le t\le T$.
 \end{thm}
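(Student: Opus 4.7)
The plan is to apply the maximum principle Lemma \ref{l-maximum} to the nonnegative quantity
$$f(x,t) = |\nabla^{g(t)} J|^2_{g(t)},$$
where $J$ is the fixed complex structure on $M$ regarded as a $(1,1)$-tensor independent of $t$. Since $\nabla^{g(t)} J \equiv 0$ is equivalent to $g(t)$ being K\"ahler, the theorem reduces to showing $f \equiv 0$ on $M \times [0,T]$. We must verify the three hypotheses of Lemma \ref{l-maximum}: the evolution inequality, the infinite-order vanishing at $t=0$, and the polynomial bound.

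For the evolution inequality, compute $\partial_t(\nabla_i J^k_j)$ using $\partial_t \Gamma^k_{ij} = -g^{kl}(\nabla_i R_{jl} + \nabla_j R_{il} - \nabla_l R_{ij})$ and expand $\Delta(\nabla J)$ via the commutator $[\Delta,\nabla]$ and the second Bianchi identity. After the explicit $\nabla \Ric$ contributions cancel, one obtains schematically $\heat(\nabla J) = \Rm \ast \nabla J$, which upon taking the squared norm, absorbing the gradient term from the Bochner identity, and using $|\Rm| \leq a/t$, yields $\heat f \leq Caf/t$ for some $C = C(n) > 0$. For the infinite-order vanishing at $t=0$, we have $f(\cdot,0) = 0$ because $g_0$ is K\"ahler, hence $\nabla^{g_0} J = 0$. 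Higher time derivatives vanish by induction: each $\partial_t^k(\nabla J)|_{t=0}$ is a universal algebraic expression in $J$ and in $\partial_t^j g|_{t=0}$ for $j \le k$; the latter are expressible in terms of $g_0$ and its covariant derivatives of curvature by iterating $\partial_t g = -2\Ric$, and all resulting curvature quantities are $J$-invariant for K\"ahler $g_0$. A direct computation in holomorphic coordinates verifies that the precise combinations entering $\partial_t^k(\nabla J)|_{t=0}$ vanish identically, so $\partial_t^k f|_{t=0} = 0$ for every $k \ge 0$.

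The polynomial bound $f \leq Ct^{-l}$ on $M \times (0,T]$ is the main obstacle, since $g_0$ itself may have unbounded curvature. Using $\bar g = g(T)$ as a comparison metric, we write
$$\nabla^{g(t)} J = (\Gamma(t) - \bar\Gamma) \ast J + \nabla^{\bar g} J.$$
Lemma \ref{l-connection} parts (i) and (iii) give $|\Gamma(t) - \bar\Gamma|_{g(t)} \leq C t^{-1/2 - ca}$ together with the metric equivalence $t^{-ca}\bar g \le g(t) \le t^{ca}\bar g$, which controls the first summand. The second term is handled by exploiting that $\bar g$ has bounded curvature $\leq a/T$: Shi's derivative estimates applied to the flow on $[T/2,T]$, combined with the evolution of $|J|^2_{g(t)}$ (which starts at the K\"ahler value $2n$ at $t=0$), yield a uniform bound on $|\nabla^{\bar g} J|_{\bar g}$ on $M$. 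Combining these estimates gives $f \leq Ct^{-1-Ca}$ on $M \times (0,T]$, which is the required polynomial bound for a sufficiently large integer $l$. With all three hypotheses of Lemma \ref{l-maximum} established, we conclude $f \equiv 0$, so $g(t)$ is K\"ahler for every $t \in [0,T]$.
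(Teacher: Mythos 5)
Your overall strategy --- feed a scalar heat subsolution vanishing to infinite order at $t=0$ into Lemma \ref{l-maximum} --- is exactly the paper's, but the quantity you chose does not satisfy the evolution inequality you claim, and this is a genuine gap. Writing $A=\nabla^{g(t)}J$, we have $\partial_t A=\dot\Gamma\ast J$ with $\dot\Gamma^k_{ij}=-g^{kl}(\nabla_iR_{jl}+\nabla_jR_{il}-\nabla_lR_{ij})$, while commuting derivatives gives $\Delta A=\nabla(\Delta J)+\nabla\Rm\ast J+\Rm\ast A$, where the $\nabla\Rm\ast J$ piece reduces via the contracted second Bianchi identity to further $\nabla\Ric\ast J$ terms. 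Carrying out the contraction, the $\nabla\Ric\ast J$ terms from $\partial_t\Gamma$ and from the commutator do \emph{not} cancel, and, more seriously, the term $\nabla(\Delta J)$ survives; it is a genuine second derivative of $A$ with no relation to curvature. A static flat metric on $\R^{4}$ (a Ricci flow solution) together with any non-parallel almost complex structure $J$ already shows that $\heat(\nabla J)=-\nabla(\Delta J)\neq 0$ while $\Rm\ast\nabla J=0$, so the schematic identity $\heat(\nabla J)=\Rm\ast\nabla J$ is false as a tensor identity; since along the flow one cannot assume any compatibility between $g(t)$ and $J$ (that is what is being proved), the inequality $\heat f\le Caf/t$ is unavailable. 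This is precisely why the paper (following Shi) instead applies the maximum principle to the squared norm $\varphi$ of the ``non-K\"ahler'' curvature components $\wt R_{AB\gamma\delta}$ after Uhlenbeck's trick: the curvature \emph{does} satisfy a clean reaction--diffusion equation, so $\heat\varphi\le C_1\varphi/t$ and the bound $\varphi\le a^2/t^2$ both follow immediately from the hypothesis $|\Rm|\le a/t$, and the passage back to K\"ahlerity is Shi's parallel-transport argument.

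Two further points would also need repair even if the evolution inequality held. First, $\nabla^{g(t)}J\equiv 0$ alone does not imply $g(t)$ is K\"ahler; you also need $g(t)(J\cdot,J\cdot)=g(t)$, which must be tracked separately. Second, in your verification of hypothesis (iii) the claimed uniform bound on $|\nabla^{\bar g}J|_{\bar g}$ is unjustified: $\bar g=g(T)$ is merely a metric of bounded curvature with no a priori relation to $J$, and since $g_0$ may have unbounded curvature the difference of connections $\Gamma(0)-\bar\Gamma$ (hence $\nabla^{\bar g}J=-(\Gamma(0)-\bar\Gamma)\ast J$) need not be bounded on $M$; Shi's estimates control $\nabla\Rm(g(t))$, not $\nabla^{\bar g}J$. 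Again, the paper sidesteps this entirely because its test function is built from curvature and is bounded by $a^2/t^2$ by hypothesis.
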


 We will use the setup as in \cite[Section 5]{Shi1997}. Let $T_\C M=T_\R M\otimes_{\mathbb{R}} \C$ be the complexification of $T_\R M$, where $T_\R M$ is the real tangent bundle. Similarly, let $T^*_\C M=T_\R^*(M)\otimes_{\mathbb{R}} \C$, where $T^*_\R M$ is the real cotangent bundle.
 Let $z=\{z^1,z^2,\cdots,z^n\}$ be a local holomorphic coordinate on $M$, and \bee
\left\{\begin{array}{ll}
         z^k=x^k+\sqrt{-1}x^{k+n} &  \\
         x^k\in \mathbb{R}, x^{k+n}\in\mathbb{R}, & k=1,2,\cdots,n.
       \end{array} \right. \eee
In the following:
 \begin{itemize}
   \item  $i,j,k,l,\cdots$  denote the indices corresponding to real vectors or real covectors;
   \item $\alpha,\beta,\gamma,\delta,\cdots$   denote the indices corresponding to holomorphic vectors or holomorphic covectors,
   \item $A,B,C,D,\cdots$   denote both $\alpha,\beta,\gamma,\delta,\cdots$ and $\bar\alpha,\bar\beta,\bar\gamma,\bar\delta,\cdots$.
 \end{itemize}
 Extend $g_{ij}(t)$, $R_{ijkl}(t)$ etc. $\C$-linearly to the complexified bundles.
 We have: \bee
\overline{g_{AB}}=g_{\bar A\bar B},\hspace{0.5cm} \overline{R_{ABCD}}=R_{\bar{A}\bar{B}\bar{C}\bar{D}}. \eee
In our convention, $R_{1221}=R(e_1,e_2,e_2,e_1)$ is the sectional curvature of the two-plane spanned by orthonormal pair $e_1, e_2$. $R_{ABCD}$ has the same symmetry as $R_{ijkl}$ and it satisfies the Binachi identities.

Let
$g^{AB}:=(g^{-1})^{AB}$, it means $g^{AB}g_{BC}=\delta^A_C$, and let
 \bee R_{AB}=g^{CD}R_{ACDB} \eee on $M\times[0,T]$.
 Then we have \be\label{e-Ricci-U}
\ppt g_{AB}=-2R_{AB} \ee   and
 \be\label{e-1}\begin{split}
\ppt R_{ABCD}=&\triangle R_{ABCD}-2g^{EF}g^{GH}R_{EABG}R_{FHCD}-2g^{EF}g^{GH}R_{EAGD}R_{FBHC}\\
&+2g^{EF}g^{GH}R_{EAGC}R_{FBHD}-g^{EF}(R_{EBCD}R_{FA}+R_{AECD}R_{FB}\\
&+R_{ABED}R_{FC}+R_{ABCE}R_{FD})  \end{split} \ee
on $M\times[0,T]$.

We begin with the following lemma:

\begin{lma}\label{l-timederivative-1} Let $(M,g_0)$ be a \K manifold, and $g(t)$ be a smooth solution to the Ricci flow with $g(0)=g_0$. In the above set up, we have
\bee
\frac{\p^k}{\p t^k}R_{AB\gamma\delta}|_{t=0}=0
\eee
at each point of $M$ and for all $k\ge 0$ and for all $A, B, \gamma,\delta$.
\end{lma}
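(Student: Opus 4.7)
The plan is to argue by induction on $k$. The base case $k=0$ is the \K condition on $g_0$: since $\n J=0$ for a \K metric, $R(X,Y)$ commutes with $J$, which yields $R(X,Y,JZ,JW)=R(X,Y,Z,W)$. Substituting $Z=\p/\p z^\gamma$, $W=\p/\p z^\d$ (so $JZ=\ii Z$, $JW=\ii W$) gives $-R_{AB\gamma\d}|_{t=0}=R_{AB\gamma\d}|_{t=0}$, hence $R_{AB\gamma\d}|_{t=0}=0$ for all $A,B$.

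For the inductive step I will prove the stronger, coupled statement: for every $k\ge 0$, the time derivatives
$$ \frac{\p^k g_{\a\b}}{\p t^k}\Big|_{t=0},\quad \frac{\p^k g^{\a\b}}{\p t^k}\Big|_{t=0},\quad \frac{\p^k R_{AB\gamma\d}}{\p t^k}\Big|_{t=0} $$
all vanish (together with their analogues obtained by complex conjugation and the symmetries of $R$). Equivalently, at $t=0$ every such tensor is of pure \K type. The underlying mechanism is purely algebraic: \K type is preserved under tensor products, contractions against a \K metric, and (at $t=0$) spatial covariant differentiation, because $\n J=0$.

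Assume the claim for all $j<k$. For the metric at order $k$: from $\p_t g_{AB}=-2g^{CD}R_{ACDB}$ and the Leibniz rule,
$$ \frac{\p^k g_{\a\b}}{\p t^k}\Big|_{t=0}=-2\sum_{i+j=k-1}\binom{k-1}{i}\frac{\p^i g^{CD}}{\p t^i}\Big|_{t=0}\,\frac{\p^j R_{\a CD\b}}{\p t^j}\Big|_{t=0}. $$
By the induction hypothesis applied to the curvature factor in the pair $(C,D)$, the only possibly nonvanishing contributions require $C,D$ both anti-holomorphic (so that the pairs $(\a,C)$ and $(D,\b)$ each have mixed type); but then the inverse-metric factor $\p_t^i g^{\bar c\bar d}|_{t=0}$ vanishes by the induction hypothesis for the inverse metric, itself controlled by repeatedly differentiating $g^{AC}g_{CB}=\d^A_B$. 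The vanishing of $\p_t^k R_{AB\gamma\d}|_{t=0}$ is obtained in the same spirit: differentiate the evolution equation \eqref{e-1} a further $k-1$ times in $t$ and evaluate at $t=0$. The result is a polynomial in $g^{-1}$, $R_{ABCD}$, $\n R$ and their time derivatives of orders strictly less than $k$; by the induction hypothesis every ingredient is of \K type, and any contraction producing an index pair $(\gamma,\d)$ in the last two slots vanishes identically.

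The main obstacle is the bookkeeping of index types through the many contractions on the right-hand side of \eqref{e-1}, and particularly through the Laplacian term $\D R_{ABCD}$, which hides spatial derivatives of $g$ inside the Christoffel symbols. To deal with this cleanly, the inductive hypothesis must be enlarged to include arbitrary spatial covariant derivatives of the time derivatives of $g$ and $R$; these remain of \K type at $t=0$ because the Levi-Civita connection of $g_0$ preserves $J$.
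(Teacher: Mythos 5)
Your overall strategy --- a strengthened induction on $k$ in which the metric, the inverse metric, the curvature and their derivatives are all shown to retain \K type at $t=0$ --- is the same as the paper's, which runs the induction on a package $H_1(k),\dots,H_7(k)$ covering $\pptk R_{AB\gamma\delta}$, $\pptk g_{AB}$, $\pptk R_{AB}$, $\pptk\Gamma^C_{AB}$, and the first and second covariant derivatives of $R_{AB\gamma\delta}$. Your base case and your treatment of the metric, inverse-metric and Ricci steps are correct and coincide with the paper's.

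There is, however, a genuine gap at the one step that is \emph{not} purely algebraic: controlling $\pptk$ of the connection (equivalently, commuting time derivatives past $\Delta=g^{EF}\nabla_E\nabla_F$). One computes
$$\ppt\Gamma^\alpha_{\gamma\bar\beta}=-g^{\alpha\bar\sigma}\lf(R_{\bar\beta\bar\sigma;\gamma}+R_{\gamma\bar\sigma;\bar\beta}-R_{\gamma\bar\beta;\bar\sigma}\ri).$$
The first term involves a same-type Ricci component and is killed by your bookkeeping, but $R_{\gamma\bar\sigma;\bar\beta}$ and $R_{\gamma\bar\beta;\bar\sigma}$ are covariant derivatives of \emph{mixed-type} Ricci components; they are nonzero individually, so no type argument makes them vanish. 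What saves the day is the contracted second Bianchi identity, which (together with the induction hypothesis on $R_{AB\gamma\delta}$ and its conjugate) gives $R_{\gamma\bar\sigma;\bar\beta}-R_{\gamma\bar\beta;\bar\sigma}=g^{CD}R_{\gamma C\bar\beta\bar\sigma;D}$, whose time derivatives vanish inductively. Your proposed remedy --- enlarging the hypothesis to spatial covariant derivatives, which ``remain of \K type because $\nabla J=0$'' --- does not address this: the connection in the evolution equation is that of $g(t)$, and its time derivatives must themselves be shown to respect type, which is exactly the cancellation above. The same issue resurfaces if you instead expand $\nabla^{g(t)}$ as $\nabla^{g_0}$ plus the difference tensor: the combination $\nabla^{g_0}_{\bar\beta}\,\partial_t^j g_{\gamma\bar\sigma}-\nabla^{g_0}_{\bar\sigma}\,\partial_t^j g_{\gamma\bar\beta}$ again requires the Bianchi identity rather than type preservation. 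Once this step is supplied, the rest of your argument closes as in the paper.
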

\begin{proof} Let $p\in M$ with holomorphic local coordinate $z$.
In the following, all computations are at $(z,0)$ unless we have   emphasis otherwise. We will prove the lemma by induction. Consider the following statement:
 \bee H(k)\left\{\begin{array}{l}
 H_1(k): \pptk R_{AB\gamma\delta}=0 \\
 H_2(k): \pptk g_{AB}=0 \hspace{.1cm} \text{if $A,B$ are of the same type} \\
 H_3(k): \pptk R_{AB}=0 \hspace{.1cm} \text{if $A,B$ are of the same type}\\
 H_4(k): \pptk\Gamma_{AB}^{C}=0 \hspace{.1cm} \text{unless  $A,B,C$ are of the same type} \\
 H_5(k): \pptk R_{AB\gamma\delta;E}=0 \\
 H_6(k): \pptk R_{AB\gamma\delta;EF}=0 \\
 H_7(k): \pptk \triangle R_{AB\gamma\delta}=0
 \end{array} \right. \eee
Here we denote covariant derivative with respect to $g(t)$ by $``;"$ and the partial derivative by $``,"$. If $H_i(k)$ are true for all   $i=1,\cdots,7$, we will say that $H(k)$ holds. As usual:
$$\Gamma_{AB}^C=\frac12 g^{CD}\lf(g_{AD,B}+g_{DB,A}-g_{AB,D}\ri).
$$
We now consider the case that $k=0$. Since the initial metric is K\"hler, it is easy to see that $H(0)$ holds. Now we assume $H(i)$ holds for all $i=0, 1,2,\cdots,k$. We want to show $H(k+1)$ holds. We first see that \bee\begin{split}
&\frac{\partial^{k+1}}{\partial t^{k+1}}R_{AB\gamma\delta}\\
=&\pptk(\triangle R_{AB\gamma\delta})-\sum\limits_{\tiny{\begin{array}{c}
m+n+p+q=k \\
0\le m,n,p,q\le k
\end{array}}
}2(g^{EF})_m(g^{GH})_n(R_{EABG})_p(R_{FH\gamma\delta})_q\\
&-\sum\limits_{\tiny{\begin{array}{c}
m+n+p+q=k \\
0\le m,n,p,q\le k
\end{array}}
}2(g^{EF})_m(g^{GH})_n(R_{EAG\delta})_p(R_{FBH\gamma})_q\\
&+\sum\limits_{\tiny{\begin{array}{c}
m+n+p+q=k \\
0\le m,n,p,q\le k
\end{array}}
}2(g^{EF})_m(g^{GH})_n(R_{EAG\gamma})_p(R_{FBH\delta})_q\\
&-\sum\limits_{\tiny{\begin{array}{c}
m+n+p=k \\
0\le m,n,p\le k
\end{array}}
}(g^{EF})_m(R_{AF})_n(R_{EB\gamma\delta})_p
 -\sum\limits_{\tiny{\begin{array}{c}
m+n+p=k \\
0\le m,n,p\le k
\end{array}}
}(g^{EF})_m(R_{BF})_n(R_{AE\gamma\delta})_p\\
& -\sum\limits_{\tiny{\begin{array}{c}
m+n+p=k \\
0\le m,n,p\le k
\end{array}}
}(g^{EF})_m(R_{\gamma F})_n(R_{ABE\delta})_p
 -\sum\limits_{\tiny{\begin{array}{c}
m+n+p=k \\
0\le m,n,p=\le k
\end{array}}
}(g^{EF})_m(R_{\delta F})_n(R_{AB\gamma E})_p.\end{split}
\eee
Here $(\,\cdot\,)_p=\frac{\p^p } {\p t^p}(\,\cdot\,)$.

Suppose $(g_{AB})_p=0$ at $t=0$ if $A,B$ are of the same type  for $p=0,1,\dots,k$, then  it is also true that $(g^{AB})_p=0$ if $A,B$ are of the same type  for $p=0,1,\dots,k$.
On the other hand, in the RHS of the above inequality, the derivative of each term with respect to $t$ is only up to order $k$, by the induction hypothesis,  $H_1(k+1)$ holds. Now
\bee
\frac{\p }{\p t}g_{AB}=-2R_{AB},
\eee
it is easy to see that $H_2(k+1)$ holds because $H_3(k)$ holds.

Since $$\frac{\partial^{k+1}}{\partial t^{k+1}} R_{\alpha\beta}=\sum\limits_{\tiny{\begin{array}{c}
m+n=k+1 \\
0\le m,n\le k+1
\end{array}}
}(g^{CD})_m(R_{\alpha CD\beta})_n,$$ and since that $H_1(k+1)$ and $H_2(k+1)$ hold, we conclude that $H_3(k+1)$ holds. Here we have used the symmetries of $R_{ABCD}$.

Since  \bee
\begin{split}
\frac{\partial^{k+1}}{\partial t^{k+1}} \Gamma^\alpha_{A\bar\b}=&
-\sum\limits_{\tiny{\begin{array}{c}
m+n=k \\
0\le m,n\le k
\end{array}}
}(g^{\a D})_m(R_{\bar\b D;A}+R_{A D;\bar\b}-R_{A \bar\b;D})_n\\
=&-\sum\limits_{\tiny{\begin{array}{c}
m+n=k \\
0\le m,n\le k
\end{array}}
}(g^{\a \bar\sigma})_m(R_{\bar\b \bar\sigma;A}+R_{A \bar\sigma;\bar\b}-R_{A \bar\b;\bar\sigma})_n,
\end{split}
\eee
 by the induction hypothesis. If $A=\bar\gamma$, then  each term on the RHS is zero by the induction hypothesis. If $A=\gamma$, then
\bee
 (R_{\bar\b\bar\sigma;\gamma})_n=(R_{\bar\b\bar\sigma,\gamma})_n-
 (\Gamma^E_{\gamma\bar\sigma}R_{E\bar\b})_n-(\Gamma^E_{\gamma\bar\b}R_{E\bar\sigma})_n, \eee so it vanishes because $n\le k$. On the other hand,
 \bee \begin{split}
& R_{\gamma \bar\sigma;\bar\b}-R_{\gamma \bar\b;\bar\sigma}\\
=&g^{CD}(R_{\gamma CD\bar\sigma;\bar\b}-R_{\gamma CD\bar\b;\bar\sigma})\\
=&g^{CD}(R_{\gamma CD\bar\sigma;\bar\b}+R_{\gamma C\bar\sigma  D ;\bar\b}+R_{\gamma C\bar\b\bar\sigma;D})\\
=&g^{CD}R_{\gamma C\bar\b\bar\sigma;D}.  \end{split} \eee
So
\bee
\lf(R_{\gamma \bar\sigma;\bar\b}-R_{\gamma \bar\b;\bar\sigma}\ri)_n=0
\eee
for $n\le k$ by the induction hypothesis.  Thus,
$$\frac{\partial^{k+1}}{\partial t^{k+1}} \Gamma^\alpha_{A\bar\b}=0
$$
at $t=0$. Since $\Gamma_{AB}^C=\Gamma_{BA}^C$ and $\ol{\Gamma_{AB}^C}=\Gamma_{\bar A\bar B}^{\bar C}$, it is easy to see that
$H_4(k+1)$ holds.

Next, \bee \begin{split} R_{AB\gamma\delta;E}=&R_{AB\gamma\delta,E}-\Gamma^G_{EA}R_{GB\gamma\delta}-\Gamma^G_{EB}R_{AG\gamma\delta}\\
&-\Gamma^G_{E\gamma}R_{ABG\delta}-\Gamma^G_{E\delta}R_{AB\gamma G}. \end{split} \eee
By $H_1(k+1)$, we have
$$
\frac{\p^{k+1}}{\p t^{k+1}}R_{AB\gamma\delta,E}=(\frac{\p^{k+1}}{\p t^{k+1}}R_{AB\gamma\delta})_{,E}=0.
$$
Since $H_1(i)$ and $H_4(i)$ are true for $0\le i\le k+1$,   $H_5(k+1)$ is true.
 Since $H_1(i)$,  $H_4(i)$ and $H_5(i)$  are true for $0\le i\le k+1$,   $H_6(k+1)$ is true.
 Finally   $H_6(i)$ is true for $0\le i\le k+1$ implies that   $H_7(k+1)$ holds. Therefore, $H(k+1)$ holds.

 \end{proof}

 Now we use the Uhlenbeck's trick to simplify the evolution equation of the complex curvature tensor. We pick an abstract vector bundle $V$ over $M$ which is isomorphic to $T_{\mathbb{C}}M$ and denote the isomorphism $u_0: V\to T_{\mathbb{C}}M$. And we take $\{e_A:=u^{-1}_0(\ppza)\}$ as a basis of $V$. We also consider a metric $h$ on $V$ by $h:=u^\ast_0g_0$. We let $u_0$ evolute by \bee \left\{
\begin{array}{l}
  \ppt u(t)=\Ric\circ u(t) \\
  u(0)=u_0
\end{array} \right. \eee In local coordinate, we have
\bee
 \left\{\begin{array}{l}
 \ppt u^{A}_{B}=g^{AC}R_{CD}u^{D}_{B}, \\
  u^{A}_{B}(0)=\delta^A_B
\end{array} \right.
  \eee

  Consider metric $h(t):=u^\ast(t)g(t)$ on $V$ for each $t\in[0,T]$. It is easy to see that $\ppt h(t)\equiv 0$ for all $t$, so $h(t)\equiv h$ for all $t$. We use $u(t)$ to pull the curvature tensor on $T_{\mathbb{C}}M$ back to $V$:
\bee
\widetilde{Rm}(e_A,e_B,e_C,e_D):=R(u(e_A),u(e_B),u(e_C),u(e_D)). \eee In local coordinate, we have \bee
\tilde{R}_{ABCD}=R_{EFGH}u^E_Au^F_Bu^G_Cu^H_D \eee on $M\times[0,T]$. One can also check \bee
\overline{h_{AB}}=h_{\bar A\bar B},\  \overline{\t R_{ABCD}}=\t R_{\bar A\bar B\bar C\bar D}. \eee

Define a connection on $V$ in the following: For any smooth section $\xi$ on $V$, $X\in T_{\mathbb{C}}M$,
\bee
D^t_X\xi=u^{-1}(\cd^t_X(u(\xi))).
\eee
 One can check $D^th=0$ and $D^tu=0$. We define $\triangle$ acting on any tensor on $V$ by \bee
\triangle:=g^{EF}D^t_ED^t_F.
\eee
Then by \eqref{e-1}, the evolution equation of $\t R$ is:
 \be \label{2}\begin{split}
\ppt \t R_{ABCD}=&\triangle \t R_{ABCD}-2h^{EF}h^{GH}R_{EABG}R_{FHCD}-2h^{EF}h^{GH}\t R_{EAGD}\t R_{FBHC}\\
&+2h^{EF}h^{GH}\t R_{EAGC}\t R_{FBHD} \end{split} \ee where $h^{AB}=(h^{-1})^{AB}$.

\begin{lma}\label{l-timederivative-2} With the above notations, we have
$$
\pptk \t R_{AB\gamma\delta}=0
$$
at $t=0$ for all $A, B$ and $\gamma,\delta$.

\end{lma}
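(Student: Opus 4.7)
My plan is to deduce the lemma from Lemma \ref{l-timederivative-1} via the defining formula $\tilde R_{ABCD} = R_{EFGH}\, u^{E}_{A} u^{F}_{B} u^{G}_{C} u^{H}_{D}$ together with a careful Leibniz expansion. The key intermediate step is the auxiliary claim that $\pptk u^{\alpha}_{\bar\beta}|_{t=0} = 0$ and $\pptk u^{\bar\alpha}_{\beta}|_{t=0} = 0$ for every $k \geq 0$, i.e., at $t=0$ all time derivatives of the mixed-type components of $u$ vanish.

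To prove the auxiliary claim I would induct on $k$. The base case $k = 0$ is trivial since $u(0) = \id$. For the inductive step, apply $\pptk$ to the evolution equation $\ppt u^{\alpha}_{\bar\beta} = g^{\alpha C} R_{CD} u^{D}_{\bar\beta}$ and expand by Leibniz into a sum of triple products $(g^{\alpha C})_{m}(R_{CD})_{n}(u^{D}_{\bar\beta})_{p}$ with $m+n+p = k$. As established in the proof of Lemma \ref{l-timederivative-1}, at $t=0$ all time derivatives of the same-type components of $g_{AB}$ and $R_{AB}$ vanish; differentiating the identity $g^{AC} g_{CB} = \delta^A_B$ and inducting gives the corresponding vanishing for $g^{AB}$. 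Hence at $t=0$, $(g^{\alpha C})_{m}$ forces $C = \bar\gamma$, then $(R_{\bar\gamma D})_{n}$ forces $D = \delta$, and finally $(u^{\delta}_{\bar\beta})_{p}$ with $p \leq k$ vanishes by the inductive hypothesis. The argument for $u^{\bar\alpha}_{\beta}$ is symmetric.

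Granted the auxiliary claim, I would expand $\pptk \tilde R_{AB\gamma\delta}$ via Leibniz as a sum over products $(R_{EFGH})_{m_1}(u^{E}_{A})_{m_2}(u^{F}_{B})_{m_3}(u^{G}_{\gamma})_{m_4}(u^{H}_{\delta})_{m_5}$ with $m_1 + \cdots + m_5 = k$. At $t=0$, each factor $(u^{X}_{Y})_{m}$ is zero whenever $X, Y$ have opposite type---either because $u(0) = \id$ when $m=0$, or by the auxiliary claim when $m \geq 1$. The surviving terms therefore require $E$ of the same type as $A$, $F$ of the same type as $B$, and $G, H$ both holomorphic; but then $(R_{EFGH})_{m_1}$ has its last two indices holomorphic and vanishes for every $m_1 \geq 0$ by Lemma \ref{l-timederivative-1}. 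Every Leibniz term is zero, so $\pptk \tilde R_{AB\gamma\delta}|_{t=0} = 0$. I do not anticipate a serious obstacle: the only delicate points are the index-type bookkeeping in the Leibniz expansion and the short inductive check that vanishing of same-type derivatives passes from $g_{AB}$ to its inverse $g^{AB}$.
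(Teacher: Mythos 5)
Your proposal is correct and follows essentially the same route as the paper: reduce via the Leibniz expansion of $\t R_{ABCD}=R_{EFGH}u^E_Au^F_Bu^G_Cu^H_D$ to the vanishing of all time derivatives of the mixed-type components $u^\a_{\bar\b}$, $u^{\bar\a}_\b$ at $t=0$, and prove that by induction from the ODE $\ppt u^A_B=g^{AC}R_{CD}u^D_B$ together with the same-type vanishing statements of Lemma \ref{l-timederivative-1}. Your write-up simply supplies the index-type bookkeeping that the paper's proof leaves to the reader.
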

\begin{proof} Note that we have:
 \bee \pptk \wt R_{AB\gamma\delta}=\sum\limits_{\tiny{\begin{array}{c}
 m+n+p+q+r=k \\
 0\le m,n,p,q,r\le k
 \end{array}}
}(u^E_A)_m(u^F_B)_n(u^G_\gamma)_p (u^H_\delta)_q (R_{EFGH})_r.
\eee
By Lemma \ref{l-timederivative-1}, in order to prove the lemma, it is sufficient to prove that $\pptk u^\a_{\bar\b}=0$ and $\pptk u^{\bar\a}_\b=0$ for all $k$ for all $\a, \b$ at $t=0$.

Recall that
\bee
 \left\{\begin{array}{l}
 \ppt u^{A}_{B}=g^{AC}R_{CD}u^{D}_{B}, \\
  u^{A}_{B}(0)=\delta^A_B.
\end{array} \right.
  \eee
  Hence $  u^\a_{\bar\b}=0$ and $ u^{\bar\a}_\b=0$. By induction, Lemma \ref{l-timederivative-1}, and the fact that $u^{A}_{B}(0)=\delta^A_B$, one can prove that  show $\pptk u^\a_{\bar\b}=0$ and $\pptk u^{\bar\a}_\b=0$ for all $k$. This completes the proof of the lemma.
\end{proof}

\begin{proof}[Proof of Theorem \ref{t-Kahler}] As in \cite{Shi1997},  define a smooth function $\varphi$
on $M\times[0,T]$ by
\be\label{e-phi} \begin{split}
\varphi(z,t)&=h^{\alpha\bar\xi}h^{\beta\bar\zeta}h^{\gamma\bar\sigma}h^{\delta\bar\eta}\t R_{\alpha\beta\gamma\delta}\t R_{\bar\xi\bar\zeta\bar\sigma\bar\eta}+h^{\bar\alpha\xi}h^{\bar\beta\zeta}h^{\gamma\bar\sigma}h^{\delta\bar\eta}\t R_{\bar\alpha\bar\beta\gamma\delta}\t R_{\xi\zeta\bar\sigma\bar\eta}\\
&+h^{\bar\alpha\xi}h^{\beta\bar\zeta}h^{\gamma\bar\sigma}h^{\delta\bar\eta}\t R_{\bar\alpha\beta\gamma\delta}\t R_{\xi\bar\zeta\bar\sigma\bar\eta}+h^{\alpha\bar\xi}h^{\bar\beta\zeta}h^{\gamma\bar\sigma}h^{\delta\bar\eta}\t R_{\alpha\bar\beta\gamma\delta}\t R_{\bar\xi\zeta\bar\sigma\bar\eta}
\end{split} \ee
One can check $\varphi$ is well-defined (independent of coordinate changes on $M$) and is nonnegative. The evolution equation of $\varphi$ is (See \cite{Shi1997}):
\be\label{7}
(\ppt-\triangle)\varphi=\t R_{CDEF}\ast\t R_{GH\alpha\beta}\ast\t R_{AB\gamma\delta}-2g^{EF}\t R_{AB\gamma\delta;E}\overline{\t R_{AB\gamma\delta}}_{;F}. \ee
As the real case,   define the norm of the complex curvature tensor by: \bee
|R_{ABCD}(t)|^2_{g(t)}=g^{AE}g^{BF}g^{CG}g^{DH}R_{ABCD}R_{EFGH}. \eee
Then we have \bee
|R_{ABCD}(t)|=|R_{ijkl}(t)|\leq \frac{a}{t} \eee  on $M\times[0,T]$ by assumption.  By the definition of $\wt R_{ABCD}$, we also have:
$$
|\wt R_{ABCD}(t)|=|R_{ABCD}(t)|\leq \frac{a}{t}.
$$

Combining with  \eqref{7}, we have
\bee
(\ppt-\triangle)\varphi \le \frac{C_1}t \varphi
\eee
on $M\times[0,T]$ for some constant $C_1$. Moreover,
$$
\varphi\le |\wt R_{ABCD}(t)|^2\le a^2/t^2.
$$
 On the other hand, by \eqref{e-phi}, Lemma \ref{l-timederivative-2} and the fact that $h$ is independent of $t$, we conclude that at $t=0$,
$$
\pptk\varphi=0,
$$
for all $k$. By Lemma \ref{l-maximum}, we conclude that $\varphi\equiv0$ on $M\times[0,T]$. As in \cite{Shi1997}, we conclude that $g(t)$ is \K for all $t>0$.

\end{proof}

\begin{cor}\label{c-CW-solution} Let $(M^n,g_0)$ be a complete noncompact \K manifold with complex dimension $n$ and with nonnegative complex sectional curvature. Suppose

\bee
\inf_{p\in M}\{V_p(1)|\ p\in M\}=v_0>0.
\eee
where $V_p(1)$ is the volume of the geodesic ball with radius 1 and center at $p$ with respect to $g_0$.
Then there is $T>0$ depending only on $n, v_0$ such that the \KR flow has a complete solution on $M\times[0,T]$ such that $g(t)$ has nonnegative complex sectional curvature. Moreover  the curvature satisfies:
$$
|\Rm(g(t))|_{g(t)}\le \frac ct
$$
where $c$ is a constant depending only on $n, v_0$ with initial data $g(0)=g_0$.
\end{cor}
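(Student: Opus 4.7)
The plan is to combine the existence result of Cabezas-Rivas and Wilking \cite{Cabezas-RivasWilking2011} (mentioned in the introduction) with Theorem \ref{t-Kahler}. Since $(M^n, g_0)$ is a complete noncompact Riemannian manifold with nonnegative complex sectional curvature and $\inf_{p\in M} V_p(1) \ge v_0 > 0$, the Cabezas-Rivas--Wilking theorem produces a $T = T(n, v_0) > 0$ and a complete smooth solution $g(t)$ of the Ricci flow on $M \times [0, T]$ with $g(0) = g_0$, such that $g(t)$ has nonnegative complex sectional curvature for all $t \in [0, T]$ and
$$
|\Rm(g(t))|_{g(t)} \le \frac{c}{t},
$$
where $c = c(n, v_0)$.

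Next I would invoke Theorem \ref{t-Kahler}: since $g_0$ is \K, the flow $g(t)$ is complete, and the curvature bound $|\Rm(g(t))|_{g(t)} \le c/t$ holds, that theorem directly implies that $g(t)$ is \K for all $t \in [0, T]$. In particular, the Ricci flow $g(t)$ is a solution of the \KRF with the prescribed initial data $g_0$.

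There is essentially no obstacle: the statement is a direct assembly of the two ingredients (existence by \cite{Cabezas-RivasWilking2011} with preservation of complex sectional curvature and the sharp decay estimate, and preservation of the \K condition under a decaying curvature bound by Theorem \ref{t-Kahler}). The only point one should verify is that the $a$ in Theorem \ref{t-Kahler} may be taken to be the constant $c = c(n, v_0)$ coming from the Cabezas-Rivas--Wilking bound, but Theorem \ref{t-Kahler} allows \emph{any} $a > 0$, so this is automatic. The constants $T$ and $c$ then depend only on $n$ and $v_0$, as claimed.
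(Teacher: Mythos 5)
Your proposal is correct and is exactly the argument the paper gives: the paper's proof is the one-line statement that the corollary "follows immediately from the result of Cabezas-Rivas and Wilking and Theorem \ref{t-Kahler}," which is precisely your assembly of the two ingredients. Your additional remark that Theorem \ref{t-Kahler} accepts any $a>0$ is a correct and worthwhile sanity check, but nothing more is needed.
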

\begin{proof}
The corollary follows immediately from the result of Cabezas-Rivas and Wilking\cite{Cabezas-RivasWilking2011}, and Theorem \ref{t-Kahler}.
\end{proof}

\begin{cor}\label{c-CW-maxvol}
Let $(M^n,g_0)$ be a complete noncompact \K manifold with complex dimension $n$ and with nonnegative complex sectional curvature. Suppose $M^n$ has maximum volume growth. Then $M^n$ is biholomorphic to $\C^n$.
\end{cor}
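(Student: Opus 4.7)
The plan is to apply Corollary \ref{c-CW-solution} to replace the possibly unbounded-curvature initial metric $g_0$ by a nearby bounded-curvature \K metric $\hg=g(t_0)$ on the same complex manifold, and then invoke the uniformization theorem of Chau and the second author from \cite{ChauTam2008}. Because biholomorphism to $\C^n$ depends only on the complex structure of $M$, it suffices to exhibit \emph{some} complete \K metric on $M$ satisfying bounded curvature, nonnegative holomorphic bisectional curvature, and maximum volume growth.

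Since $g_0$ has nonnegative complex sectional curvature, in particular $\Ric(g_0)\ge0$; combined with Bishop--Gromov monotonicity, the maximum volume growth hypothesis forces $\inf_{p\in M}V_{g_0}(B_{g_0}(p,1))\ge v>0$. Hence Corollary \ref{c-CW-solution} yields $T>0$ and a complete \KRF solution $g(t)$ on $M\times[0,T]$ with $g(0)=g_0$, $|\Rm(g(t))|_{g(t)}\le c/t$, and nonnegative complex sectional curvature; by Theorem \ref{t-Kahler} every $g(t)$ is \K. For any fixed $t_0\in(0,T]$, setting $\hg:=g(t_0)$ produces a complete \K metric on $M$ with bounded curvature and with nonnegative holomorphic bisectional curvature (the latter being weaker than nonnegative complex sectional).

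It remains to verify that $\hg$ retains the maximum volume growth of $g_0$. Because $\Ric(g(t))\ge0$ gives $\p_t g=-2\Ric\le0$ throughout the flow, we have $\hg\le g_0$ pointwise, so $d_{\hg}\le d_{g_0}$ and $B_{\hg}(p,r)\supseteq B_{g_0}(p,r)$ for every $r>0$. Bishop--Gromov applied to $\hg$ (also having $\Ric\ge0$) makes $V_{\hg}(B_{\hg}(p,r))/r^{2n}$ nonincreasing in $r$, so it suffices to bound this ratio below by a positive constant for some large $r$. I would establish this via a monotonicity of the asymptotic volume ratio $\operatorname{AVR}(g(t))$ along \KR flows with nonnegative holomorphic bisectional curvature, showing $\operatorname{AVR}(\hg)\ge c(n)\operatorname{AVR}(g_0)>0$; once established, $(M,\hg)$ fulfills the hypotheses of \cite{ChauTam2008}, which produces the biholomorphism $M\cong\C^n$.

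The main obstacle is precisely this AVR preservation: since the bound $|\Rm|\le c/t$ is not integrable at $t=0$, one cannot compare the volume forms of $\hg$ and $g_0$ by integrating $\p_t\log\det g=-R$ from $0$ to $t_0$. A scale-invariant argument is therefore needed, exploiting for instance Perelman-type reduced-volume monotonicity, Hamilton's differential Harnack inequality adapted to \KRF with nonnegative holomorphic bisectional curvature, or a non-collapsing estimate inherited from the \cite{Cabezas-RivasWilking2011} construction.
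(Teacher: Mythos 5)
Your overall architecture matches the paper's: get the flow from Corollary \ref{c-CW-solution}, fix $t_0>0$ so that $\hg=g(t_0)$ is \K with bounded nonnegative bisectional curvature, and reduce everything to showing $\hg$ still has maximum volume growth before invoking \cite{ChauTam2008}. The preliminary observations (nonnegative complex sectional curvature $\Rightarrow \Ric\ge 0$, Bishop--Gromov giving $\inf_p V_p(1)>0$, and $\hg\le g_0$ so that $B_{\hg}(p,r)\supseteq B_{g_0}(p,r)$) are all correct and consistent with what the paper does.

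However, the one step that carries the entire proof --- preservation of maximum volume growth --- is not actually proved; you explicitly defer it to an unproven ``monotonicity of the asymptotic volume ratio,'' and the tools you list are either not carried out or not the right ones (Perelman's reduced volume and Hamilton's Harnack inequality do not directly yield a lower bound on $V_p(g(t_0),r)/r^{2n}$ for all large $r$). The paper closes this gap with a concrete scale-invariant estimate: for the rescaled flows $\tilde g(s)=r^{-2}g(r^2s)$, which still have nonnegative (complex) sectional curvature, one has
$$V_p(\tilde g(s),1)-V_p(\tilde g(0),1)\ge -c_n s,$$
i.e.\ the volume of a unit ball decreases at most linearly in time with a dimensional constant. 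This follows, as in \cite{Cabezas-RivasWilking2011}, from Petrunin's theorem \cite{Petrunin} bounding $\int_{B(p,1)}\mathrm{scal}$ by $c_n$ on manifolds of nonnegative sectional curvature, combined with $\frac{d}{dt}d\mathrm{vol}=-\mathrm{scal}\,d\mathrm{vol}$. Since $V_p(\tilde g(0),1)=r^{-2n}V_p(g_0,r)\ge v_0>0$ by maximum volume growth, taking $r^2s=t_0$ fixed and $r\to\infty$ (so $s\to 0$) gives $r^{-2n}V_p(g(t_0),r)\ge v_0-c_ns\ge v_0/2$ for all large $r$, which is exactly the needed conclusion. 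Without this (or an equivalent) argument, your proof is incomplete precisely at its crux; note also that your observation $B_{\hg}(p,r)\supseteq B_{g_0}(p,r)$ does not help on its own, since the volume form of $\hg$ is not controlled from below by that of $g_0$ when $|\Rm|\le c/t$ is not integrable at $t=0$ --- a difficulty you correctly diagnose but do not resolve.
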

\begin{proof} By volume comparison, we have
\bee
\inf_{p\in M}\{V_p(1)|\ p\in M\}=v_0>0.
\eee
Let $g(t)$ be the solution of \KR flow on $M\times[0,T]$ obtained as in   Corollary \ref{c-CW-solution}. Then for all $t>0$, $g(t)$ has nonnegative complex sectional curvature and the curvature of $g(t)$ is bounded. We want to prove that $g(t)$ has maximum volume growth.

Let $p\in M$ and let $r>0$ be fixed. Let $\tilde g(s)=r^{-2}g(r^2 s)$, $0\le s\le r^{-2}T$. Then $\tilde g(s)$ is a solution to the \KR flow with initial data $\tilde g(0)=r^{-2}g_0$. Since the sectional curvature of $\tilde g(s)$ is nonnegative, as in \cite{Cabezas-RivasWilking2011}, using a result of \cite{Petrunin}, one can prove that:
\bee
V_p(\tilde g(s),1)-V_p(r^{-2}g_0,1)=V_p(\tilde g(s),1)-V_p(\tilde g(0),1)\ge -c_ns
\eee
where $V_p(h,1)$   denotes the volume of the geodesic ball with radius 1 and center at $p$ with respect to $h$ and $c_n$ is a positive constant depending only on $n$.
Now
\bee
\begin{split}
V_p(r^{-2}g_0,1)=&\frac{V_p(g_0,r)}{r^{2n}}\ge v_0>0
\end{split}
\eee
because $g_0$ has maximum volume growth. Hence there is $r_0>0$ such that if $r\ge r_0$, then
\bee
r^{-2n}V_p(g(r^2s),r)=V_p(\tilde g(s),1)\ge C_1
\eee
for some constant independent of $s$ and $r$ for all $0\le s\le r^{-2}T$.   Fix $t_0>0$, and let $s$ be such that $r^2 s=t_0$. Then $s\le r^{-2}T$. So we have
\bee
r^{-2n}V_p(g(t_0),r)\ge C_1
\eee
if $r$ is large enough. That is, $g(t_0)$ has maximum volume growth. By \cite{ChauTam2008}, we conclude that $M$ is biholomorphic to $\C^n$.

\end{proof}

 \section{Preservation of non-negativity of holomorphic bisectional curvature}\label{s-bisectional}

Let $(M^n,g_0)$ be a complete noncompact \K manifold with complex dimension $n$. We want to study the preservation of non-negativity of holomorphic bisectional curvature under \KR flow, without assuming the curvature is bounded in space and time.

Let us first define a quadratic form for any $(0,4)$-tensor $T$ on $T_\mathbb{C}M$ with a metric $g$ by

\bee \begin{split} Q(T)(X,\bar X,Y,\bar Y):=&\sum\limits_{\mu,\nu=1}^n(|T_{X\bar\mu\nu\bar Y}|^2-|T_{X\bar\mu Y\bar\nu}|^2+T_{X\bar X\nu\bar \mu}T_{\mu\bar \nu Y\bar Y})\\
&-\sum\limits_{\mu=1}^n\textbf{Re}(T_{X\bar\mu}T_{\mu\bar XY\bar Y }+T_{Y\bar\mu}T_{X\bar X\mu\bar Y})\end{split}
\eee
for all $X,Y\in T^{1,0}_\mathbb{C}M$, where $T_{\a\bb\gamma\bar\delta}=T(e_\a,\bar e_\b,e_\gamma,\bar e_\delta)$,
$T_{\a\bar \b}=g^{\gamma\bar \delta}T_{\a\bb\gamma\bar\delta}$ and $\{e_1,\dots,e_n\}$ is a unitary frame with respect to the metric of $g$, $T_{X\bar \mu\nu\bar Y}=T(X,\bar e_\mu,e_\nu,\bar Y)$ etc. Here $T$ is a tensor has the following properties:
\bee
\ol{T(  X,Y,Z,W)}=T(\bar  X,\bar Y,\bar Z,\bar W);
\eee
\bee
T(X,Y,Z,W)=T(Z,W,X,Y)=T(X,W,Z,Y)=T(Y,X,W,Z).
\eee

Let $g(t)$ be a solution of the \KR flow:
\bee
\ppt g_{\a\bb}=-R_{\a\bb}.
\eee
Recall the evolution equation for holomorphic bisectional curvature: (See \cite[Corollary 2.82]{Chow07})

\bee (\ppt-\triangle)R(X,\bar X,Y,\bar Y)=Q(R)(X,\bar X,Y,\bar Y) \eee for all $X,Y\in T^{1,0}_\mathbb{C}M$. Here $\triangle$ is with respect to $g(t)$.

Next define a $(0,4)$-tensor $B$ on $T_\mathbb{C}M$ (with a metric $g$) by:
\bee
B(E,F,G,H)=g(E,F)g(G,H)+g(E,H)g(F,G) \eee
for all $E,F,G,H\in T_\mathbb{C}M$.

\begin{lma}\label{qb} In the above notation, $Q(B)(X,\bar X,Y,\bar Y)\leq 0$ for all $X,Y\in T^{1,0}_\mathbb{C}M$.
\begin{proof}
\bee
\begin{split} Q(B)(X,\bar X,Y,\bar Y)=&\sum\limits_{\mu,\nu=1}^n(|B_{X\bar\mu\nu\bar Y}|^2-|B_{X\bar\mu Y\bar\nu}|^2+B_{X\bar X\nu\bar \mu}B_{\mu\bar \nu Y\bar Y})\\
&-\sum\limits_{\mu=1}^n\textbf{Re}(B_{X\bar\mu}B_{\mu\bar XY\bar Y }+B_{Y\bar\mu}B_{X\bar X\mu\bar Y})\end{split}
\eee
Let $\{e_1,\dots,e_n\}$  be a unitary frame. $X=\sum_{\mu=1}^nX^\mu e_\mu,$ $\sum_{\mu=1}^n Y=Y^\mu e_\mu$.

We compute it term by term: \bee\begin{split}
\sum\limits_{\mu,\nu=1}^n|B_{X\bar\mu\nu\bar Y}|^2=&\sum\limits_{\mu,\nu=1}^n(g_{X\bar\mu}g_{\nu\bar Y}+g_{X\bar Y}g_{\bar\mu\nu})\cdot(g_{\bar X\mu}g_{\bar\nu Y}+g_{\bar X Y}g_{\mu\bar\nu})\\
=&\sum\limits_{\mu,\nu=1}^n(X^\mu\bar Y^\nu+g_{X\bar Y}g_{\bar\mu\nu})\cdot(\bar X^\mu Y^\nu+g_{\bar X Y}g_{\mu\bar\nu})\\
=&|X|^2|Y|^2+(n+2)|g(\bar X, Y)|^2. \end{split} \eee

\bee\begin{split}
\sum\limits_{\mu,\nu=1}^n|B_{X\bar\mu Y\bar\nu}|^2=&\sum\limits_{\mu,\nu=1}^n(g_{X\bar\mu}g_{\bar\nu Y}+g_{X\bar \nu}g_{\bar\mu Y})\cdot(g_{\bar X\mu}g_{\nu \bar Y}+g_{\bar X \nu}g_{\mu\bar Y})\\
=&\sum\limits_{\mu,\nu=1}^n(X^\mu Y^\nu+X^\nu Y^\nu)\cdot(\bar X^\mu \bar Y^\nu+\bar X^\nu\bar Y^\mu)\\
=&2|X|^2|Y|^2+2|g(\bar X, Y)|^2. \end{split} \eee

\bee\begin{split}
\sum\limits_{\mu,\nu=1}^n B_{X\bar X \nu\bar\mu}B_{\mu\bar\nu Y\bar Y}=&(g_{X\bar X}g_{\nu\bar\mu}+g_{X\bar\nu}g_{\bar X\nu})\cdot(g_{\mu\bar\nu}g_{Y\bar Y}+g_{\mu\bar Y}g_{\bar\nu Y})\\
=&(|X|^2g_{\nu\bar\mu}+X^\mu\bar X^\nu)\cdot(g_{\mu\bar\nu}|Y|^2+\bar Y^\mu Y^\nu)\\
=&(n+2)|X|^2|Y|^2+|g(\bar X, Y)|^2. \end{split} \eee

\bee\begin{split}
\sum\limits_{\mu=1}^n B_{X\bar\mu}B_{\mu\bar XY\bar Y}=&\sum\limits_{\mu=1}^n g^{k\bar l}B_{X\bar\mu k\bar l}B_{\mu\bar XY\bar Y}\\
=&\sum\limits_{\mu,\nu=1}^n B_{X\bar\mu \nu\bar\nu}B_{\mu\bar XY\bar Y}\\
=&\sum\limits_{\mu,\nu=1}^n (g_{X\bar\mu}g_{\nu\bar\nu}+g_{X\bar\nu}g_{\bar\mu\nu})\cdot(g_{\bar X\mu}g_{\bar YY}+g_{\bar X Y}g_{\mu\bar Y})\\
=&\sum\limits_{\mu,\nu=1}^n (X^\mu g_{\nu\bar\nu}+X^\nu g_{\bar\mu\nu})\cdot(\bar X^\mu g_{Y\bar Y}+\bar Y^\mu g_{\bar XY})\\
=&(n+1)|X|^2|Y|^2+(n+1)|g(\bar X, Y)|^2. \end{split} \eee

Similarly, we have

\bee
\sum\limits_{\mu=1}^n B_{Y\bar\mu}B_{X\bar X\mu\bar Y}=(n+1)|X|^2|Y|^2+(n+1)|g(\bar X, Y)|^2 \eee

Therefore, \bee Q(B)(X,\bar X,Y,\bar Y)=-(n+1)(|X|^2|Y|^2+|g(\bar X, Y)|^2)\leq 0. \eee
\end{proof}
\end{lma}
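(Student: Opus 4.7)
The plan is a direct algebraic computation in a unitary frame. Let $\{e_1,\dots,e_n\}$ be a unitary frame at the point in question, so $g_{\mu\bar\nu} = \delta_{\mu\nu}$, and write $X = \sum X^\mu e_\mu$, $Y = \sum Y^\mu e_\mu$. Then $g_{X\bar\mu} = X^\mu$, $g_{\bar X\mu} = \ol{X^\mu}$, $g_{X\bar X} = |X|^2$. The key observation at the outset is that, because both $X$ and $Y$ are of type $(1,0)$, the quantity $g(\bar X, Y) = \sum_\mu X^\mu Y^\mu$ is $\C$-bilinear (not Hermitian) in $X$ and $Y$; its squared absolute value $|g(\bar X, Y)|^2$ is, together with $|X|^2|Y|^2$, the second natural invariant that will appear in every term. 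Keeping these two invariants apart is the main conceptual point.

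First, I would compute once and for all the Ricci-type contraction $B_{\a\bb} := g^{\g\bd} B_{\a\bb\g\bd}$. Substituting $B(E,F,G,H) = g(E,F)g(G,H) + g(E,H)g(F,G)$ and using $g^{\g\bd} g_{\g\bd} = n$, this yields $B_{\a\bb} = (n+1)\, g_{\a\bb}$. This single identity disposes of the two terms at the end of $Q(B)$: each of $\sum_\mu B_{X\bar\mu}B_{\mu\bar X Y\bar Y}$ and $\sum_\mu B_{Y\bar\mu}B_{X\bar X\mu\bar Y}$ reduces to a multiple of $(n+1)$ times one of the two invariants above.

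Next, I would expand each of the three ``curvature-like'' quadratic sums $\sum_{\mu,\nu} |B_{X\bar\mu\nu\bar Y}|^2$, $\sum_{\mu,\nu} |B_{X\bar\mu Y\bar\nu}|^2$, and $\sum_{\mu,\nu} B_{X\bar X\nu\bar\mu} B_{\mu\bar\nu Y\bar Y}$ by distributing the two-term expression for $B$. Each splits into four products of metric coefficients, and in the unitary frame each reduces either to $|X|^2|Y|^2$ or to $|g(\bar X, Y)|^2$, with possible factors of $n$ arising from Kronecker-delta traces over a free frame index. Collecting coefficients, the three terms should contribute $|X|^2|Y|^2 + (n+2)|g(\bar X, Y)|^2$, $2|X|^2|Y|^2 + 2|g(\bar X, Y)|^2$, and $(n+2)|X|^2|Y|^2 + |g(\bar X, Y)|^2$ respectively (the middle one entering with a minus sign in $Q$).

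Finally, I would sum all five contributions and check that the coefficients of $|X|^2|Y|^2$ and of $|g(\bar X, Y)|^2$ independently collapse to $-(n+1)$, giving
\[
Q(B)(X,\bar X,Y,\bar Y) \;=\; -(n+1)\bigl(|X|^2|Y|^2 + |g(\bar X, Y)|^2\bigr) \;\le\; 0.
\]
The main obstacle is purely organizational: with twenty subterms to track (five quadratic expressions, each expanding into four pieces), the risk is to mislabel indices or to conflate $|X|^2|Y|^2$ with $|g(\bar X, Y)|^2$. There is no clever trick required beyond the unitary-frame reduction and the preliminary computation $B_{\a\bb} = (n+1)g_{\a\bb}$.
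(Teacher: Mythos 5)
Your proposal is correct and follows essentially the same route as the paper: fix a unitary frame, expand each of the five sums in $Q(B)$ by distributing the two-term formula for $B$, and collect coefficients of the two invariants $|X|^2|Y|^2$ and $|g(\bar X,Y)|^2$; all of your claimed intermediate coefficients match the paper's. Your one organizational addition --- computing $B_{\a\bb}=(n+1)g_{\a\bb}$ up front to dispose of the last two sums --- is a clean shortcut the paper does not isolate, but it is the same computation. One slip in your preliminary remark: $g(\bar X,Y)=\sum_\mu \bar X^\mu Y^\mu$, not $\sum_\mu X^\mu Y^\mu$, and it is sesquilinear (antilinear in $X$, linear in $Y$) rather than $\C$-bilinear; this typo does not propagate into your stated coefficients, which are correct.
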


We are ready to prove Theorem \ref{t-intro-2}:

\begin{thm}\label{nonnegative-bi} There is $0< a(n)<1$ depending only on $n$ such that if $g(t)$ is a complete solution of  \KRF on $M\times[0,T]$ with $|\Rm(g(t))|_{g(t)}\leq \frac{a}{t}$, where $M$ is an $n$-dimensional non-compact complex manifold. If $g(0)$ has nonnegative holomorphic bisectional curvature, then so does $g(t)$ for all $t\in[0,T]$.
\begin{proof} The theorem   is  known to be true  if the curvature is uniformly bounded on space and time \cite{Shi1997}. Since $g(t)$ has bounded curvature on $M\times[\tau,T]$ for all $\tau>0$, it is sufficient to prove that $g(t)$ has nonnegative bisectional curvature on $M\times[0,\tau]$ for some $\tau>0$. Hence we may assume that $T\le 1$.

 In the following,  lower case $c_1,c_2,\cdots$ will denote  constants  depending only on $n$.

Since $g(T)$ has bound curvature $\frac a T$ and is complete, as in   Lemma \ref{l-cutoff},  a smooth function $\rho$ defined on $M$ such that
\bee
(1+d_T(x,p))  \leq \rho(x)\leq D_1 (1+d_T(x,p))
\eee
\be\label{e-rho}
|\bar\nabla\rho|+|\bar\nabla^2\rho|\leq D_1,
\ee
for some constant $D_1$ depending only on $n$ and $g_T$, where $d_T(x,p)$ is the distance function with respect to $g_T$ from a fixed point $p\in M$, where $\bar\n$ is the covariant derivative with respect to $g_T$

Suppose $|\Rm(g(t))|_{g(t)}\le a/t$, where $a$ is to be determined later depending only on $n$. By Lemma \ref{l-cutoff}, we have
\be\label{e-D-rho}
|\nabla \rho|\le D_2 t^{-c_1a}
\ee
for some constant $D_2$ depending only on $n, g_T$. Here and below, $\nabla $ is the covariant derivative of $g(t)$ and hence is time dependent. We may get a better estimate for $\Delta\rho=\Delta_{g(t)}\rho$ than that in Lemma \ref{l-cutoff}. Choose a normal coordinate with respect to $g(T)$ which also diagonalizes $g(t)$ with eigenvalues $\lambda_\a$. Then

\be\label{e-2}
|\triangle\rho|=|g^{\abb} \rho_{\abb}|=\sum_{\a=1}^n\lambda_\a^{-1}|\bar\nabla^2\rho|\le D_3 t^{-c_2a}, \ee
by Lemma \ref{l-connection}.

Let $\phi$ be a smooth cut-off function   from $\mathbb{R}$ to $[0,1]$ such that \bee
  \phi(x)=\left\{\begin{array}{cc}
                    1, & x\leq 1  \\
                    0, & x\geq 2
                  \end{array} \right.
\eee and $|  \phi'|+|  \phi''|\leq D'$, $\phi'\le0$. Let $\Phi=\phi^m$, where $m>4$ is an integer to be determined later. Then
\bee
0\ge \Phi'\ge -D(m)\Phi^q;\ \  |\Phi''|\le D(m)\Phi^q
\eee
 for some positive constant $D(m)$ depending only on $D'$ and $m$, where $q=1-\frac2m$.

Let  $\Psi(x)=\Phi(\frac{\rho(x)}{r})$ on $M$ for $r\geq 1$. Note that $\Psi$ depends on $r$.

Then we have \be\label{e-cutoff-1}\begin{split}
|\n\Psi|\le&\frac{1}{r}D(m)\Psi^q|\n\rho|\\
 \leq&\frac{D_4 }{r}\Psi^qt^{-c_1a} \end{split}\ee
by \eqref{e-D-rho}, and
\be\label{e-cutoff-2}
 |\triangle\Psi|\le\frac{1}{r^2}|\Phi''||\n\rho|^2+\frac{1}{r}|\Phi'\triangle\rho|\le \frac{D_4}{r}\Psi^qt^{-c_2a}
\ee
by \eqref{e-2},
 where $D_4$ is a constant depending only on $n,g_T,m$.

For any  $\varepsilon>0$, we define a tensor $A$ on $M\times(0,T]$: For vectors $X, Y,Z, W\in T_\C(M)$,
\bee
A(X,Y,Z,W) =t^{-\frac{1}{2}}\Psi(x)R(X,Y,Z,W) +\varepsilon B(X,Y,Z,W)
\eee
where $R$ is the curvature tensor of $g(t)$ and $B$ is evaluated with respect to $g(t)$.

Define the following  function on $M\times(0,T]$:
 \bee
H(x,t)=\inf\{A_{X\bar XY\bar Y}(x,t)| |X|_t=|Y|_t=1, X,Y\in T^{(1,0)}_xM\}. \eee Here $|\cdot|_t$ is the norm with respect to $g(t)$.

To show the theorem, it suffices to show for all $r>>1$, $H(x,t)\geq 0$ for all $x$ and  for all $t>0$.  Note that $t^\frac12 H(x,t)$ is a continuous function.  Since $\Psi$ has compact support,
and $B(X,\bar X,Y,\bar Y)\ge1$ for all $|X|_t=|Y|_t=1$,   there is a compact set $K\in M$ such that
\bee H(x,t)>0 \eee on $(M\setminus K)\times(0,T]$.
On the other hand,
we   claim that there is $T_0>0$ such that
\bee t^\frac12 H(x,t)>0
\eee on $K\times(0,T_0)$. Let $\{e_1,e_2,\dots,e_n\}$ be a unitary frame near a compact neighborhood $U$  of a  point $x_0\in K$ with respect to $g_0$. Then at each point $x\in U$,

$$
R_{\a\bb\gamma\bar\delta}(x,t)=R_{\a\bb\gamma\bar\delta}(x,0)+tE
$$
where $|E|$ is uniformly bounded on $U\times[0,T]$. Since $g(t)$ is uniformly equivalent to $g(0)$ on $U$,
for any $X, Y\in T^{1,0}_x(M)$ for $(x,t)\in U\times[0,T]$,
$$
R(X,\bar X,Y,\bar Y)\ge  -D_5t|X|^2_{0}|Y|^2_{0}
$$
for some constant $D_5>0$ where  we have used the fact that $g_0$ has nonnegative holomorphic bisectional curvature. Since $g(t)$ and $g_0$ are uniformly equivalent in $K$, and $K$ is compact, we conclude that
$$
R(X,\bar X,Y,\bar Y)\ge  -D_6t
$$
on $K\times[0,T]$ for some constant $D_6$ for all $X, Y\in T^{1,0}_x(M)$ with $|X|_t=|Y|_t=1$. Since $t^\frac12 B(X,\bar X,Y,\bar Y)\ge t^\frac12$, it is easy to see the claim is true. To summarize, we have proved that there is a compact set $K$ and there is $T_0>0$, such that $H(x,t)>0$ on $M\setminus K\times(0,T]$ and $K\times(0,T_0)$.

Suppose $H(x,t)<0$ for some $t>0$. Then $t^\frac12 H(x,t)<0$ for some $t>0$.   We must have $x\in K$ and $t\ge T_0$. Hence we can find $x_0\in K$, $t_0\ge T_0$ and a neighborhood $V$ of $x_0$ such that $H(x_0,t_0)=0$, $H(x,t)\ge 0$ for $x\in V$, $t\le t_0$. This implies that there exist $X_0,Y_0\in T^{(1,0)}_{x_0}M$ with norm $|X_0|_{g(t_0)}=|Y_0|_{g(t_0)}=1$
such that
\bee
A_{X_0\bar X_0Y_0\bar Y_0}(x_0,t_0)=0. \eee Then we extend $X_0,Y_0$ near $x_0$ by parallel translation with respect to $g(t_0)$ to vector fields  $\wt X_0,\wt Y_0$ such that they are independent of time and $$\Delta_{g(t_0)}\wt X_0=\Delta_{g(t_0)}\wt Y_0=0,$$ at $x_0$.

Denote $h(x,t):=A_{\wt X_0\bar{\wt X_0}\wt Y_0\bar{\wt Y_0}}(x,t)$. At $(x_0,t_0)$, we have  $h(x_0,t_0)=0$ and   $h(x,t)\ge0$ for $x\in V$, $t\le t_0$ by the definition of $A$.

Hence at  $(x_0,t_0)$,
 \be\label{e13} \begin{split}
0\ge&(\ppt-\triangle)h\\
=&t_0^{-\frac{1}{2}}\Psi\lf( \heat  R\ri)(  X_0,\bar{  X_0},  Y_0,\bar{  Y_0}))-t_0^{-\frac12}R (  X_0,\bar{ X_0},  Y_0,\bar{  Y_0})\Delta\Psi\\
&-2 t_0^{-\frac12}\la\nabla R (\wt  X_0,\bar{\wt X_0},  \wt Y_0,\bar{\wt  Y_0}),\nabla \Psi\ra-\frac12 t_0^{-\frac32}\Psi R (  X_0,\bar{ X_0},  Y_0,\bar{  Y_0})-\varepsilon(\Delta B)(  X_0,\bar{ X_0},  Y_0,\bar{  Y_0})\\
&+\varepsilon (-\Ric( X_0,\bar{X_0})-\Ric(Y_0,\bar Y_0)- \Ric(X_0,\bar Y_0)g(X_0,\bar Y_0)-\Ric(\bar X_0,Y_0)g(X_0,\bar Y_0))\\
\ge & t_0^{-\frac{1}{2}}\Psi Q(R)(  X_0,\bar{  X_0},  Y_0,\bar{  Y_0}))-D_4r^{-1}t_0^{-\frac12-c_2a}\Psi^q|R(  X_0,\bar{  X_0},  Y_0,\bar{  Y_0}))| \\
&-\frac12 t_0^{-\frac32}\Psi R (  X_0,\bar{ X_0},  Y_0,\bar{  Y_0})-c_3\varepsilon  at_0^{-1}-2 t_0^{-\frac12}\la\nabla R ( \wt X_0,\bar{\wt X_0}, \wt Y_0,\bar{\wt  Y_0}),\nabla \Psi\ra,
\end{split}\ee
where we have used \eqref{e-cutoff-2} and the fact that $\Delta B=0$. On the other hand, at $(x_0,t_0)$
\bee
\begin{split}
0=&\nabla h\\
=&t_0^{-\frac12}\n \lf(R(  \wt X_0,\bar{\wt X_0}, \wt Y_0,\bar{\wt  Y_0})\Psi\ri)\\
=&t_0^{-\frac12}\lf[\Psi\n R (  \wt X_0,\bar{\wt X_0}, \wt Y_0,\bar{\wt  Y_0})+R (    X_0,\bar{  X_0},   Y_0,\bar{   Y_0})\n\Psi\ri]
\end{split}
\eee
where we have used the fact that $\n g=0$ and $\n\wt X_0=\n\wt Y_0=0$ at $(x_0,t_0)$. Hence \eqref{e13} implies
 \be\label{e13-1}   \begin{split}
0\ge& t_0^{-\frac{1}{2}}\Psi Q(R)(  X_0,\bar{  X_0},  Y_0,\bar{  Y_0}))-D_7r^{-1}t_0^{-\frac12-c_4a}|R(  X_0,\bar{  X_0},  Y_0,\bar{  Y_0})|\lf(\Psi^q+\Psi^{2q-1}\ri) \\
&-\frac12 t_0^{-\frac32}\Psi R (  X_0,\bar{ X_0},  Y_0,\bar{  Y_0})-c_3\varepsilon at_0^{-1}
\end{split}
\ee
where we have used \eqref{e-cutoff-1}, where $D_7>0$ is a constant depending only on $g_T , n, m$.  On the other hand, by the null-vector condition \cite[Proposition 1.1]{Mok} (see also \cite{Bando1984}), we have
$$
Q(A)(X_0,\bar X_0,Y_0,\bar Y_0)\ge 0
$$
By a direct computation, one can see that
\bee
 Q(A)=t_0^{-1}\Psi^2 Q(R)+\varepsilon^2 Q(B)+t_0^{-\frac{1}{2}}\Psi\varepsilon R\ast B, \eee
and we have
\bee
\begin{split}
0\le&t_0^{-1}\Psi^2 Q(R) (X_0,\bar X_0,Y_0,\bar Y_0)+\varepsilon^2 Q(B)(X_0,\bar X_0,Y_0,\bar Y_0)+c_5\varepsilon\Psi at_0^{-\frac32}\\
\le&t_0^{-1}\Psi^2 Q(R) (X_0,\bar X_0,Y_0,\bar Y_0) +c_5\varepsilon \Psi at_0^{-\frac32}
\end{split}
\eee
where we have used Lemma \ref{qb} and $c_5$ is a constant depending only on $n$. That is
\be\label{e-null}
\begin{split}
 \Psi  Q(R) (X_0,\bar X_0,Y_0,\bar Y_0)\ge -c_5\varepsilon   at_0^{-\frac12}
\end{split}
\ee
where we have used the fact that $h(x_0,t_0)=0$ which implies $\Psi(x_0,t_0)>0$.

Combining this with \eqref{e13-1}, we have
\be\label{e13-2}   \begin{split}
0\ge& -(c_3+c_5)\varepsilon  at_0^{-1} -D_7r^{-1}t_0^{-\frac12-c_4a}|R(  X_0,\bar{  X_0},  Y_0,\bar{  Y_0})|\lf(\Psi^q+\Psi^{2q-1}\ri) \\
&-\frac12 t_0^{-\frac32}\Psi R (  X_0,\bar{ X_0},  Y_0,\bar{  Y_0}),
\end{split}
\ee

Since  $h(x_0,t_0)=0$, we also have
$$
\Psi(x_0,t_0)R(X_0,\bar X_0,Y_0,\bar Y_0)=- t_0^\frac12\varepsilon B(X_0,\bar X_0,Y_0,\bar Y_0).
$$
Hence at $(x_0,t_0)$, \eqref{e13-2} implies, if $0<a<1$, then
\bee   \begin{split}
0\ge& -(c_3+c_5)\varepsilon  a    -2D_7 r^{-1}t_0^{\frac12 -c_4a}  |R(  X_0,\bar{  X_0},  Y_0,\bar{  Y_0})| \Psi^{2q-1}  -\frac12 t_0^{-\frac12}  \Psi  R (  X_0,\bar{ X_0},  Y_0,\bar{  Y_0}) \\
\ge&-(c_3+c_5)\varepsilon  a    -2D_7 r^{-1}t_0^{\frac12 -c_4a}  |R(  X_0,\bar{  X_0},  Y_0,\bar{  Y_0})|^{2(1-q)}\,|\varepsilon t_0^\frac12 B(  X_0,\bar{  X_0},  Y_0,\bar{  Y_0})|^{2q-1}\\
&+\frac12 \varepsilon  B(  X_0,\bar{  X_0},  Y_0,\bar{  Y_0})\\
\ge&-(c_3+c_5)\varepsilon  a -D_8r^{-1}\varepsilon^{2q-1}t_0^\a+\frac12 \varepsilon
\end{split}
\eee
because $0\le \Psi\le 1$, $q=1-\frac 2m<1$, $m>4$ where $D_8>0$ are constants depending only on $g_T, n, m$.  Here
$$
\a=\frac12 -c_4a-2(1-q)+\frac12(2q-1)= 3q-c_4a-2.
$$
Hence if $c_4a<\frac12$ and $a<1$, then $a$ depends only on $n$ and $3q-c_4a-2>0$, provided $m$ is large enough. If $a, m$ are chosen satisfying these conditions, then we have
$$
0\ge  -(c_3+c_5)\varepsilon  a-D_8r^{-1}\varepsilon^{2q-1}+\frac12\varepsilon.
$$
If $a$ also satisfies $a(c_3+c_5)<\frac12$, then we have a contradiction if $r$ is large enough. Hence if
$$
0<a<\min\{1, \frac12 c_4^{-1},\frac12 (c_3+c_5)^{-1}\},
$$
then $g(t)$ will have nonnegative holomorphic bisectional curvature. This completes the proof of the theorem.

\end{proof}

\end{thm}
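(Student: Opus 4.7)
The plan is to adapt Hamilton's tensor maximum principle to the unbounded-curvature setting, combining the standard null-vector argument of Mok--Bando with the cutoff techniques developed in Section \ref{s-max}. Because the curvature is only controlled by $a/t$, the bisectional curvature cannot be bounded uniformly down to $t=0$, so a direct maximum principle on $R(X,\bar X,Y,\bar Y)$ is hopeless. I would instead work with a perturbed tensor
$$
A \;=\; t^{-1/2}\,\Psi(x)\,R \;+\; \varepsilon B,
$$
where $\Psi(x)=\Phi(\rho(x)/r)$ is built from the exhaustion $\rho$ of Lemma \ref{l-cutoff} with $\Phi=\phi^m$ for a large integer $m$, and $B(X,Y,Z,W)=g(X,Y)g(Z,W)+g(X,W)g(Y,Z)$. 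The factor $t^{-1/2}$ is designed to compete with the $a/t$ singularity of $R$; the $\varepsilon B$ term supplies an unconditional positive push that will be removed at the end by $\varepsilon\to 0$; and $\Psi$ spatially localizes the argument.

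Define $H(x,t)=\inf\{A_{X\bar X Y\bar Y}(x,t):|X|_t=|Y|_t=1\}$. The first step is to verify that $H>0$ both outside a compact set (since $\Psi$ vanishes there while $\varepsilon B\ge\varepsilon$) and for sufficiently small $t$ on each compact set (using that $g_0$ has nonnegative bisectional curvature, so on a compact neighborhood $R(X,\bar X,Y,\bar Y)\ge -Ct$, dominated by the $\varepsilon t^{1/2}B$ contribution). Assuming for contradiction $H<0$ somewhere, there is a first touching point $(x_0,t_0)$ where $A_{X_0\bar X_0 Y_0\bar Y_0}(x_0,t_0)=0$ for some unit $X_0,Y_0$. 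I then parallel-translate $X_0,Y_0$ with respect to $g(t_0)$ to local vector fields $\wt X_0,\wt Y_0$ with vanishing Laplacian at $x_0$, set $h(x,t)=A_{\wt X_0\bar{\wt X_0}\wt Y_0\bar{\wt Y_0}}(x,t)$, and write down $(\partial_t-\Delta)h\le 0$ at $(x_0,t_0)$.

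Expanding this inequality uses: (i) the evolution $(\partial_t-\Delta)R=Q(R)$ for bisectional curvature; (ii) the time derivative of $t^{-1/2}$, producing a crucial term $-\tfrac12 t_0^{-3/2}\Psi R$; (iii) the cutoff errors $|\nabla\Psi|,|\Delta\Psi|$ controlled by \eqref{e-cutoff-1}--\eqref{e-cutoff-2}; and (iv) the $\varepsilon\partial_t B=O(\varepsilon a/t)$ term, with $\Delta B=0$. The first-order term in $\nabla\Psi$ is eliminated by $\nabla h(x_0,t_0)=0$. The key algebraic input is Mok's null-vector condition applied to the nonnegative tensor $A$ at its null direction: $Q(A)(X_0,\bar X_0,Y_0,\bar Y_0)\ge 0$. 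Expanding $Q(A)=t_0^{-1}\Psi^2 Q(R)+\varepsilon^2 Q(B)+t_0^{-1/2}\varepsilon\Psi\,R\ast B$ and invoking Lemma \ref{qb} (which gives $Q(B)\le 0$) converts this into the lower bound $\Psi Q(R)\ge -c\,\varepsilon a\,t_0^{-1/2}$, which I substitute back.

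The hard part will be balancing powers of $t_0$, $\varepsilon$, and $\Psi$ to force a contradiction. Using the identity $t_0^{-1/2}\Psi R=-\varepsilon B$ (which holds at the touching point) to eliminate $R$, the inequality reduces, with $q=1-2/m$, to something of the schematic form
$$
0 \;\ge\; -C_1\varepsilon a \;-\; C_2\,r^{-1}\varepsilon^{2q-1}\,t_0^{\,3q-c a-2} \;+\; \tfrac12\varepsilon.
$$
Choosing $m$ large so that $3q-2>0$ and then $a(n)$ small enough that $ca<\min\{3q-2,\tfrac12 C_1^{-1}\}$, the middle term is killed by sending $r\to\infty$, producing the contradiction $0\ge\tfrac12\varepsilon>0$. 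Hence $H\ge 0$ for all admissible $r,\varepsilon$; letting $\varepsilon\to 0$ gives nonnegative bisectional curvature of $g(t)$. The main technical obstacle is ensuring that after the Mok substitution there remains a strictly positive power of $t_0$ on the error term, so that the $a/t$ singularity does not swamp the $\varepsilon$ barrier; this is exactly what forces $a$ to be taken small depending only on $n$.
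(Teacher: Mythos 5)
Your proposal is correct and follows exactly the same strategy as the paper: the same perturbed tensor $A=t^{-1/2}\Psi R+\varepsilon B$ with cutoff $\Psi=\Phi(\rho/r)$, the same null-vector argument of Mok--Bando applied to the touching point, the same decomposition of $Q(A)$ together with Lemma \ref{qb}, the same substitution $t_0^{-1/2}\Psi R=-\varepsilon B$ to eliminate $R$, and the same choice of $m$ large and $a(n)$ small before letting $r\to\infty$ and then $\varepsilon\to 0$. There is no substantive difference from the paper's proof.
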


As an application, we have the following:

\begin{cor}\label{Xu} Let   $(M^n,g_0)$ is a complete noncompact \K manifold with complex dimension $n\ge2$ with nonnegative holomorphic bisectional curvature with maximum volume growth. Suppose there is $r_0>0$ and there is $C>0$ such that
$$
\lf(\aint_{B_x(r_0)}|\Rm|^p\ri)^\frac1p\le C
$$
for some $p>n$ for all $x\in M$. Then $M$ is biholomorphic to $\C^n$.
\end{cor}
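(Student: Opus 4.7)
The plan is to reduce this to the bounded-curvature uniformization theorem of Chau--Tam \cite{ChauTam2008} by using Xu's smoothing result in \cite{Xu2013} to construct a \KRF whose curvature decays like $a/t$ with $a$ small. Specifically, Xu's result says that under a local integral curvature bound $\big(\aint_{B_x(r_0)}|\Rm|^p\big)^{1/p}\le C$ with $p>n$, one can construct a complete short-time Ricci flow $g(t)$ on $M\times[0,T]$ with $g(0)=g_0$ satisfying $|\Rm(g(t))|_{g(t)}\le a/t$. Parabolic rescaling of the initial data, combined with shrinking $T$, allows one to arrange $a$ to be as small as desired; in particular, smaller than the constant $a(n)$ appearing in Theorem \ref{t-intro-2}.

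Once such a flow is produced, the next step is to show it is a \KRF that preserves the curvature condition. Because $g_0$ is \K, Theorem \ref{t-intro-1} immediately forces $g(t)$ to be \K on $[0,T]$, so $g(t)$ is genuinely a \KRF. Because $g_0$ has nonnegative holomorphic bisectional curvature and $a<a(n)$, Theorem \ref{t-intro-2} then yields that $g(t)$ has nonnegative holomorphic bisectional curvature for all $t\in[0,T]$. For any fixed $t_0>0$ the metric $g(t_0)$ has bounded curvature $|\Rm|\le a/t_0$, so the initial integral hypothesis has been converted into a pointwise bound.

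Next, I would transfer maximum volume growth from $g_0$ to $g(t_0)$ by the rescaling argument already used in the proof of Corollary \ref{c-CW-maxvol}: for large $r$, set $\tilde g(s):=r^{-2}g(r^2 s)$ on $[0,r^{-2}T]$ and use the Petrunin-type volume non-collapsing estimate that holds for Ricci flow with nonnegative curvature, together with $V_p(g_0,r)\ge v_0 r^{2n}$, to deduce $r^{-2n}V_p(g(t_0),r)\ge C_1>0$ for all sufficiently large $r$. At this point $g(t_0)$ is a complete \K metric on $M$ with bounded, nonnegative holomorphic bisectional curvature and maximum volume growth, so Chau--Tam \cite{ChauTam2008} concludes that $M$ is biholomorphic to $\C^n$.

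The main obstacle is Step 1: one must verify carefully that the result of \cite{Xu2013} does yield a Ricci flow with the sharp $a/t$ decay (rather than only a pseudolocality-type short-time smoothing), and that $a$ can be made smaller than $a(n)$. If Xu's statement only provides a shorter-time smoothing without the sharp decay constant, one would upgrade the bound by a further Shi-type derivative estimate combined with a rescaling argument. Once the flow with the required $a/t$ bound is in place, Steps 2--3 are a direct concatenation of Theorems \ref{t-intro-1} and \ref{t-intro-2} together with the Chau--Tam uniformization theorem, exactly as in the proof of Corollary \ref{c-intro-2}.
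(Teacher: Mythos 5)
Your overall skeleton (Xu's smoothing, then Theorems \ref{t-intro-1} and \ref{t-intro-2}, then Chau--Tam) is the same as the paper's, but two of the connecting steps as you propose them do not work. First, the mechanism for making the constant small: a bound of the form $|\Rm(g(t))|_{g(t)}\le a/t$ is scale-invariant, so parabolic rescaling $\tilde g(s)=\lambda g(\lambda^{-1}s)$ leaves the constant $a$ unchanged, and shrinking $T$ does not reduce it either; your proposed fallback (Shi-type estimates plus rescaling) would not help for the same reason. The point the paper uses is that Xu's theorem gives the \emph{subcritical} bound $|\Rm(g(t))|\le Ct^{-n/p}$ with $n/p<1$, so that $Ct^{-n/p}=\big(Ct^{1-n/p}\big)t^{-1}\le a(n)t^{-1}$ once $t$ is small; i.e.\ the effective constant tends to $0$ with $T$, and the hypotheses of Theorems \ref{t-intro-1} and \ref{t-intro-2} are met on a short enough time interval.

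Second, the transfer of maximum volume growth: the Petrunin-type volume estimate invoked in the proof of Corollary \ref{c-intro-1} requires nonnegative (complex) sectional curvature, which is what the Cabezas-Rivas--Wilking solution provides there; here you only have nonnegative holomorphic bisectional curvature, so sectional curvatures may be very negative and that argument is unavailable. The paper instead exploits the same subcritical exponent again: since $\int_0^T t^{-n/p}\,dt<\infty$, the Ricci flow equation gives $e^{-C'}g_0\le g(t)\le e^{C'}g_0$, so $g(t)$ is uniformly equivalent to $g_0$ and maximum volume growth transfers immediately. Note that under a mere $a/t$ bound such uniform equivalence can fail, so recording the sharper $t^{-n/p}$ decay is essential to both steps.
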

\begin{proof} By \cite{Xu2013}, the Ricci flow with initial data $g_0$ has short time solution $g(t)$ so that the curvature has the following bound:
$$
|\Rm(g(t))|\le C t^{-\frac{n}{p}}
$$
for some constant $C$. Since $\frac np<1$,  by Theorems \ref{t-Kahler}and \ref{nonnegative-bi} $g(t)$ is \K and has bounded nonnegative bisectional curvature for $t>0$. Since $\frac np<1$ it is easy to see that $g(t)$ is uniformly equivalent to $g_0$. Hence $g(t)$ also has maximum volume growth. By \cite{ChauTam2008}, $M$ is biholomorphic to $\C^n$.

\end{proof}

\section{producing  \KR flow through $h$-flow}\label{s-hflow}

We want to produce solutions to \KR flow using the solutions of the so-called $h$-flow by M. Simon \cite{Simon2002}. Let us recall the set up and some results in \cite{Simon2002}. Let $ M^n $ be a smooth manifold, and let $g$ and $h$ be two Riemannian metrics on $M$. For a constant $\delta>1$, $h$ is said to be   $\delta$ close to $g$ if
 $$
 \delta^{-1}h\le g\le \delta h.
 $$
 Let $g(t)$ be a smooth family of metrics on $M\times[0,T]$, $T>0$. $g(t)$ is said to be a solution to the $h$-flow, if $g(t)$ satisfies
 following DeTurck flow, see \cite{Shi1989,Simon2002}:
 \be\label{e-hflow}
 \ppt g_{ij}=-2\Ric_{ij}+\n_iV_j+\n_jV_i,
 \ee
  where \bee
 V_i=g_{ij}g^{kl}(\Gamma^{j}_{kl}-{}^h\Gamma^{j}_{kl}),
 \eee
 and $\Gamma_{kl}^i$, ${}^h\Gamma_{kl}^i$ are the Christoffel symbols of $g(t)$ and $h$ respectively, and $\nabla$ is the covariant derivative with respect to $g(t)$. One can rewrite \eqref{e-hflow} in the following way which shows that it is a strictly parabolic system:
\bee\begin{split}
 \ppt g_{ij}=&g^{\a\b}\wt\n_\a\wt\n_\b g_{ij}-g^{\a\b}g_{ip}h^{pq}\wt\Rm_{j\a q\b}-g^{\a\b}g_{jp}h^{pq}\wt\Rm_{i\a q\b}\\
 &+\frac{1}{2}g^{\a\b}g^{pq}(\wt\n_i g_{p\a}\cdot\wt\n_j g_{q\b}+2\wt\n_\a g_{jp}\cdot\wt\n_q g_{i\b}-2\wt\n_\a g_{jp}\cdot\wt\n_\b g_{iq}\\
 &-2\wt\n_j g_{\a p}\cdot\wt\n_\b g_{iq}-2\wt\n_i g_{\a p}\cdot\wt\n_\b g_{jq}), \end{split} \eee
 where $\wt\n $ is covariant derivative with respect to $h$.

  In order to emphasis the background metric $h$, we  call it $h$-flow as in \cite{Simon2002}.
 We are only interested in the case that $M$ is noncompact and $g$ is complete.

 In \cite{Simon2002}, Simon obtained the following:
  \begin{thm}\label{t-Simon}[Simon]
  There is a $\e=\e(n)>0$ depending only on   $n$ such that if $(M^n,g_0)$ is a smooth $n$-dimensional complete noncompact manifold such that there is a smooth Riemannian metric $h$ with $|\nabla^i\Rm(h)|\le k_i$ for all $i$ and is $(1+\e(n))$ close to $g_0$, then the $h$-flow \eqref{e-hflow} has a  smooth  solution on $M\times[0,T]$ for some $T>0$ with $T$ depending only on $n, k_0$ such that $g(t)\to g_0$ as $t\to 0$ uniformly on compact sets and such that
  $$
  \sup_{x\in M}|\nabla^i g(t)|^2\le \frac{C_i}{t^i}
  $$
  for all $i$, where $C_i$ depends only on $n, k_0,\dots,k_i$. Moreover, $h$ is $(1+2\e)$ close to $g(t)$ for all $t$. Here and in the following $\n$ and $|\cdot|$ are with respect to $h$.
  \end{thm}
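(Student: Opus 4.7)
The plan is to solve the strictly parabolic system \eqref{e-hflow} by exhaustion. Take a smooth exhaustion $\{\Omega_k\}$ of $M$ by relatively compact domains with smooth boundary, and on each $\Omega_k$ solve the initial-boundary value problem for the $h$-flow with Dirichlet boundary data $g(t)\equiv g_0$ on $\partial\Omega_k$. Because the second expression for $\ppt g_{ij}$ in terms of $\wt\n$ displays the right-hand side as a quasilinear strictly parabolic operator in $g$ (with principal part $g^{\alpha\beta}\wt\n_\alpha\wt\n_\beta$), standard parabolic theory gives a smooth short-time solution $g_k(t)$ on $\Omega_k\times[0,T_k]$. The goal is to produce uniform-in-$k$ estimates on a common time interval $[0,T]$ with $T=T(n,k_0)$ and pass to a subsequential limit.

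The first and most delicate step is the $C^0$ closeness estimate: starting from $(1+\e)$ closeness of $h$ to $g_0$, one must show the flow preserves $(1+2\e)$ closeness on a definite interval $[0,T(n,k_0)]$. The natural approach is to compute $\heat F$ for scalars $F$ built from the endomorphism $h^{-1}g$, e.g.\ for $F=\tr_h g$ or $F=\tr(h^{-1}g-\id)^2$. Using the parabolic form of \eqref{e-hflow} and the symmetry of $\nabla g$ (since $V$ is the tension between $\Gamma$ and ${}^h\Gamma$), one derives an inequality of the form
\be
\heat F \ \le\ C(n)\,k_0\,F + C(n)\,k_0
\ee
on the region where $h$ is $(1+2\e)$ close to $g$, with the crucial constants depending only on $n$ and $k_0$. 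A contradiction/continuity argument combined with the maximum principle (the boundary values are $\e$-small on $\partial\Omega_k$) then shows that the degradation of pinching stays below $\e$ for time $t\le T(n,k_0)$.

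Once the uniform closeness is available, the Bernstein estimates $|\n g(t)|^2\le C_1/t$ are obtained by considering
\be
\Phi \ =\ t\,|\n g|^2\,\phi(F),
\ee
where $\phi$ is a cutoff ensuring the pinching remains $(1+2\e)$ close. One computes
\be
\heat \Phi \ \le\ -c\,t\,|\n^2 g|^2 + C(n,k_0)\bigl(|\n g|^2 + 1\bigr),
\ee
and then applies the maximum principle on $\Omega_k\times[0,T]$, using that $|\n g|^2$ vanishes on the parabolic boundary in the interior sense (up to the explicit data of $h$). The higher-order bounds $|\n^i g(t)|^2\le C_i/t^i$ follow by induction, considering $\Phi_i = t^i|\n^i g|^2 + A\, t^{i-1}|\n^{i-1}g|^2$ with $A=A(n,k_0,\dots,k_{i-1})$ chosen large enough to absorb the commutator terms arising from $\wt\Rm(h)$. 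Passing $k\to\infty$ via Arzel\`a–Ascoli and the uniform $C^j$ estimates in the interior yields the global solution on $M\times[0,T]$.

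The main obstacle is the first step. A direct maximum principle argument applied to $\tr_h g$ alone only yields $e^{Ct}$-type degradation and hence $(1+\e e^{Ct})$ closeness, which is worse than $(1+2\e)$ unless $T$ depends on $\e$; to keep $T=T(n,k_0)$ independent of $\e$ one must work simultaneously with both $\tr_h g$ and $\tr_g h$ (or eigenvalue barriers for $h^{-1}g$) and carefully track how $|\n g|^2$ enters these evolution equations, showing that the bad terms carry an extra factor of the pinching itself so that $F$ cannot leave the $(1+2\e)$ range before time $T(n,k_0)$. The Bernstein and limit steps are then routine extensions of Shi's technique adapted to the DeTurck setting, with $h$ playing the role of the background bounded-geometry metric.
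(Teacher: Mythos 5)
The paper does not prove this theorem; it is quoted verbatim from Simon \cite{Simon2002}, so there is no in-paper argument to compare against. As a blind reconstruction, your sketch has two genuine gaps.

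First, the Bernstein step is not closed as written. You assert that $|\nabla g|^2$ vanishes on the parabolic boundary of $\Omega_k\times[0,T]$, but with Dirichlet data $g\equiv g_0$ on $\partial\Omega_k$ one has $|\nabla g|=|\nabla g_0|$ on the lateral boundary for all $t$, which is nonzero and not controlled in terms of $n,k_0,\dots$; the factor of $t$ in $\Phi=t\,|\nabla g|^2\phi(F)$ kills the $t=0$ face but not $\partial\Omega_k\times(0,T]$. You need a spatial cutoff supported away from $\partial\Omega_k$, exactly as in Shi's interior estimates, and only then pass $k\to\infty$. Second, the ``main obstacle'' paragraph rests on a false premise. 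The curvature terms do not carry an extra factor of the pinching: at $g=h$ the $h$-flow reduces to $\ppt g=-2\Ric(h)$ (the background metric is not a stationary solution), so the drift is of size $\sim k_0$ even when the pinching vanishes, and no cancellation of the kind you invoke exists. None is needed, either: since $\e=\e(n)$ is a fixed dimensional constant, the additive drift bound $F(t)\le F(0)+C(n,k_0)\,t$ already preserves $(1+2\e)$-closeness on a time interval of length $\sim\e(n)/C(n,k_0)$, which indeed depends only on $n$ and $k_0$. Your direct estimate two paragraphs earlier is therefore essentially correct, and the proposed eigenvalue-barrier ``fix'' is misdirected. Finally, note that Simon's construction (mirrored by Lemma \ref{l-Ricciflow} of this paper) does not solve Dirichlet problems on an exhaustion; it caps $g_0$ to agree with $h$ outside compact sets and invokes bounded-curvature existence theory on all of $M$. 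The Dirichlet route can be made to work, but only with the interior localization just described.
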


  From this and  using Theorem \ref{t-Kahler}, one can construct solution $g(t)$ to the \KR flow if $g_0$ is \K. Moreover, the curvature of $g(t)$ is bounded by $C/t$. However, we motivated by the uniformization conjecture of Yau \cite{Y}, we also want to prove that if $g_0$ has nonnegative holomorphic bisectional curvature, then one can construct solution to the \KR flow so that $g(t)$ also has nonnegative holomorphic bisectional curvature. To achieve this goal, we want to apply Theorem \ref{nonnegative-bi}. Therefore, we want to show that $C$ in the curvature bound  $C/t$ above is small provided $\e$ is small. We need   more refined estimates of $|\nabla g|$ and $|\nabla^2g|$. We proceed as in \cite{Shi1989,Simon2002}

Recall the evolution equations $|\nabla^p g|^2$, $p=1, 2$. Let $\wt\Rm$ be the curvature tensor of $h$ and let   $$\square =\ppt-g^{ij}\n_i\n_j.$$  Then (see \cite{Simon2002}):

 \be\label{boxdg} \begin{split}
\square |\n g|^2=&-2g^{kl}\n_{k}\n g_{ij}\cdot\n_{l}\n g_{ij}\\
&+\wt\Rm\ast g^{-1}\ast\n g\ast \n g+\wt\Rm\ast g^{-1}\ast g^{-1}\ast g\ast\n g\ast\n g\\
&+g^{-1}\ast g\ast\n\wt\Rm\ast\n g+g^{-1}\ast g^{-1}\ast \n g\ast\n g\ast\n^2 g\\
&+g^{-1}\ast g^{-1}\ast g^{-1}\ast \n g\ast \n g\ast \n g\ast\n g,
\end{split} \ee and

 \be\label{boxddg}\begin{split}
\square(|\n^2 g|^2)=&-2g^{ij}\n_i(\n^2g)\n_j(\n^2g)\\
&+\sum\limits_{i+j+k=2, 0\leq i,j,k\leq 2}\n^i g^{-1}\ast\n^jg\ast\n^k\wt\Rm\ast\n^2g\\
&+\sum\limits_{i+j+k+l=4, 0\leq i,j,k,l\leq 3}\n^i g^{-1}\ast\n^jg^{-1}\ast\n^kg\ast\n^lg\ast\n^2g.
\end{split} \ee
Here for tensors   $S_1*S_2$ denotes some trace with respect to $h$ of tensors $S_1, S_2$. The total numbers of terms on the R.H.S. of each equation depend only on $n$.

 \begin{lma}\label{l-d1-estimate} Let $(M^n,h)$ be a complete noncompact Riemannian manifold such that $|\Rm(h)|\le k_0$, and $|\nabla\Rm(h)|\le k_1$ with $k_0+k_1\leq 1$. For any $\a>0$ there is a constant $b(n,\a)>0$ depending only on $n$ and $\a$ such that $e^{2b}\le 1+\e(n)$ where $\e(n)$ is the constant in Theorem \ref{t-Simon}, and if $g(t)$ is the solution of the $h$-flow on $M\times [0,T]$, $T\leq 1$ obtained in Theorem \ref{t-Simon} with $g(0)=g_0$ satisfies $e^{-b}h\le g_0\le e^bh$, then there is a $T_1(n,\a)>0$ depending only on $n,\a$ such that
 \bee
 |\nabla g(t)|^2\le \frac \a t
 \eee
 for all $t\in (0,T_1]$.
 \end{lma}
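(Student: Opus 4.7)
The plan is to apply a Bernstein-type argument to the auxiliary function
$$
F(x,t)\;=\;t\,|\nabla g|^2\;+\;N\,|g-h|^2_h,
$$
where $N=N(n)$ will be chosen so that the negative term $-|\nabla g|^2$ generated by $\square|g-h|^2_h$ cancels the $|\nabla g|^2$ contributed by $\partial_t\bigl(t|\nabla g|^2\bigr)$. All norms and covariant derivatives are taken with respect to $h$, and $\square=\ppt-g^{ij}\n_i\n_j$. Theorem \ref{t-Simon} already supplies the crude bounds $|\nabla g|^2\le C_0(n)/t$ and $e^{-2b}h\le g(t)\le e^{2b}h$; by taking $b$ and $T_1$ small enough we shall refine the constant $C_0(n)$ down to the prescribed $\alpha$.

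First I would bound $\square|\nabla g|^2$ from \eqref{boxdg}. The $\wt\Rm$-terms contribute at most $C(n)k_0|\nabla g|^2$; the $\nabla\wt\Rm$-term, which is linear in $\nabla g$, is handled via AM--GM as $C(n)k_1^2+C(n)|\nabla g|^2$; the cubic $\nabla g\ast\nabla g\ast\nabla^2 g$-term is absorbed into the good term by $C|\nabla g|^2|\nabla^2 g|\le|\nabla^2g|^2+C(n)|\nabla g|^4$; and the quartic $|\nabla g|^4$ is reduced to $(C_0/t)|\nabla g|^2$ using Theorem \ref{t-Simon}. Assembling, and using $t\le 1$, $k_0+k_1\le 1$, yields
$$
\square\bigl(t|\nabla g|^2\bigr)\;\le\;C_1(n)\,|\nabla g|^2\;+\;C_1(n)\,t\,k_1^2.
$$
Setting $\phi:=|g-h|^2_h$, and using $\n h=0$, one computes
$$
\square\phi\;=\;2\,(g-h)^{kl}\,\square g_{kl}\;-\;2\,g^{pq}\n_p(g-h)^{kl}\,\n_q(g-h)_{kl}.
$$
The diffusion piece is at most $-2e^{-2b}|\nabla g|^2$ since $g\ge e^{-2b}h$, while $|\square g|_h\le C(n)(k_0+|\nabla g|^2)$ from \eqref{e-hflow} and $|g-h|_h\le C(n)b$ bound the reaction piece by $C_2(n)\,b\,(k_0+|\nabla g|^2)$. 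For $b$ below some $b_0(n)$,
$$
\square\phi\;\le\;-|\nabla g|^2\;+\;C_2(n)\,b\,k_0.
$$
Choosing $N=C_1(n)+1$ cancels the $|\nabla g|^2$ on the right of $\square F$ and leaves $\square F\le C_3(n)(t+b)$; at $t=0$ one has $F(\cdot,0)\le N\,C_4(n)\,b^2$.

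To conclude on the non-compact manifold, I would invoke the maximum principle via a cut-off. Since Theorem \ref{t-Simon} makes $|\nabla g(t)|$ globally bounded in $x$ for each positive $t$, and $|g-h|_h$ is bounded by the $C^0$-closeness, $F$ is bounded on $M\times[0,T]$; moreover $h$ has bounded geometry, so there is a smooth exhaustion $\rho$ with $|\bar\n\rho|_h+|\bar\n^2\rho|_h$ bounded, as in Lemma \ref{l-cutoff}. Passing to $G:=F-\int_0^t C_3(s+b)\,ds$ (which satisfies $\square G\le 0$) and applying the maximum principle to $G-\varepsilon(\rho+At)$ for $A>\sup|g^{ij}\n_i\n_j\rho|$, then letting $\varepsilon\to 0$, yields
$$
\sup_M F(\cdot,t)\;\le\;NC_4(n)\,b^2\;+\;C_3(n)\,(1+b)\,T_1,\qquad t\in[0,T_1]\subset[0,1].
$$
Given $\alpha>0$, one first chooses $b=b(n,\alpha)$ small enough that $b\le b_0(n)$, $e^{2b}\le 1+\e(n)$, and $NC_4b^2\le\alpha/2$; then one chooses $T_1=T_1(n,\alpha)$ so that $C_3(1+b)T_1\le\alpha/2$. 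The main obstacle is the non-compact maximum principle step: the global-in-$x$ bound on $F$ coming from Theorem \ref{t-Simon} is essential to legitimately rule out escape of the supremum to infinity via the $\rho$-barrier. The combinatorial bookkeeping in \eqref{boxdg} is tedious but mechanical once the groupings above are fixed.
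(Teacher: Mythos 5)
Your proof is correct, but it takes a genuinely different route from the paper's. The paper runs Shi's multiplicative trick: it forms $\psi=\varphi\,|\n g|^2$ with $\varphi=a(n)+\sum_i\lambda_i^m$, where differentiating the trace power twice produces a term $-\tfrac{m(m-1)}{2}e^{-2b(m-2)}|\n g|^2$ in $\square\varphi$; at a spatial maximum of $t\eta_r\psi$ this yields a quadratic inequality of the form $0\le -c\,m^2F^2+C(mF^2+mF+1)$, so the bound $t|\n g|^2\le c/m$ is won from the $m^2$-versus-$m$ competition, and the smallness of $b=1/(2m)$ enters only to keep $e^{2bm}$ bounded. You instead use an additive barrier $F=t|\n g|^2+N|g-h|_h^2$ with $N=N(n)$ fixed: the diffusion of $|g-h|_h^2$ supplies the linear good term $-2e^{-2b}|\n g|^2$ which cancels the $+|\n g|^2$ from $\partial_t(t|\n g|^2)$, and the smallness of the final bound comes from the initial value ($\lesssim b^2$) plus the time integral of the forcing ($\lesssim T_1$). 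The step that makes your linear scheme close is the absorption $t|\n g|^4\le C_0(n)|\n g|^2$ via the crude bound $|\n g|^2\le C_0/t$ of Theorem \ref{t-Simon}; this is legitimate because the lemma assumes $g(t)$ is the solution produced by that theorem, but it is an input the paper's argument does not need (the paper re-derives the $1/t$ bound with the improved constant from scratch). Both proofs share the same minor subtlety at $t=0$ (one needs $t|\n g|^2\to 0$ as $t\to 0$ to evaluate the barrier at the initial time, which the paper also uses implicitly when it asserts $F(x_0,0)=0$), and both use the same noncompact maximum-principle apparatus. Your version is more elementary and makes transparent why both $b$ and $T_1$ must shrink with $\a$; the paper's is self-contained at this point and pins down explicitly how $b$ and the exponent $m$ interact.
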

 \begin{proof} By a $\mathcal{C}^0$-estimate of $h$-flow (See \cite[Theorem 2.5]{Shi1989} or \cite[Theorem 2.3]{Simon2002}), the constant  $\e(n)>0$ in Theorem \ref{t-Simon} can be chosen such that if $h$ is $e^b$ close to $g_0$ with $e^b\le 1+\e(n)$, then the solution $g(t)$ in Theorem \ref{t-Simon} is defined on $M\times[0,T_2]$ for some $T\ge T_2=T_2(n)>0$ (note that we assume $k_0+k_1\le 1$. Moreover
 \be\label{e-fair}
 e^{-2b}h\leq g(t)\leq e^{2b}h
  \ee
   for all $t\in[0,T_2]$.

   Let $f_0=|g|$, $f_1=|\n g|$ and $f_2=|\n^2 g|$.
  First choose $b>0$ such that:

  ({\bf c1}) $e^{2b}\le 2$ and $e^b\le 1+\e(n)$.

  Then we have $f_0\leq c_1$. Here and in the following, lower case $c_i$ will denote positive constants depending only on $n$.

 Using \eqref{e-fair}, we estimate terms of R.H.S. of \eqref{boxdg} in the following:
\bee \begin{split}
&g^{\alpha\beta}\n_{\alpha}\n g_{ij}\cdot\n_{\beta}\n g_{ij}\geq \frac{1}{2}f_2^2 \\
&\wt\Rm\ast g^{-1}\ast\n g\ast\n g\leq c_2k_0f_1^2\\
&\wt\Rm\ast g^{-1}\ast g^{-1}\ast g\ast\n g\ast\n g\leq c_2k_0f_1^2\\
&g^{-1}\ast g\ast\n\wt\Rm\ast\n g\leq c_2k_1f_1 \\
&g^{-1}\ast g^{-1}\ast \n g\ast\n g\ast\n^2 g\leq c_2f_1^2\cdot f_2\\
&g^{-1}\ast g^{-1}\ast g^{-1}\ast \n g\ast \n g\ast \n g\ast\n g\leq c_2f_1^4. \end{split}\eee

Then \eqref{boxdg} implies \be \label{boxdg1} \begin{split}
\square(f_1^2)\leq& -f_2^2+c_2\lf(2k_0f_1^2+k_1f_1+f_1^2f_2+f_1^4\ri)\\
\leq&-\frac{1}{2}f_2^2+c_3\lf(f_1^4+1\ri), \\
 \end{split} \ee
where we have used the assumption that $k_0+k_1\le 1$.
Next we define a smooth function $\varphi$ on $M\times[0,T]$ as follows:
\bee
\varphi= a(n)+g_{j_1i_1}h^{i_1j_2}g_{j_2i_2}h^{i_2j_3}\cdots g_{j_mi_m}h^{i_mj_1}. \eee
We will choose $a>0$ and $m$ later with $a$ depending only on $n$ and $m$ depending only on $n, \a$. One can choose a coordinate system $\{x^i\}$ such that at one point:
$ h_{ij}=\delta_{ij} $ and $g_{ij}=\lambda_i\delta_{ij}$. Then $\varphi=a+\sum\limits_{i=1}^{n}\lambda^m_i$. By direct computation, we have \be\label{boxphi}\begin{split}
\square\varphi=&m\lambda^{m-1}_k\ast(\wt\Rm\ast g^{-1}\ast g+g^{-1}\ast g^{-1}\ast\n g \ast\n g)\\
&-m(\lambda^{m-2}_i+\lambda^{m-3}\lambda_j+\cdots+\lambda^{m-2}_j)g^{\alpha\beta}\n_\alpha g_{ij}\n_{\beta}g_{ij}\\
\leq&c_4me^{2b(m-1)}\lf(f_1^2+ 1\ri)-\frac{m(m-1)}{2}e^{-2b(m-2)}f_1^2.
\end{split} \ee
Here we use the fact that $e^{-2b}h\leq g(t)\leq e^{2b}h$ and $k_0\le 1$.

Now we define $\psi=\varphi\cdot f_1^2$. By \eqref{boxdg1} and \eqref{boxphi}, we have \be\label{boxpsi}\begin{split}
\square\psi=&\varphi\cdot\square(f_1^2)+\square\varphi\cdot f_1^2-2g^{ij}\n_i \varphi\n_j f_1^2\\
\leq&\varphi\lf(-\frac{1}{2}f_2^2+c_3\lf(f_1^4+1\ri)\ri) +f_1^2 \lf(c_4me^{2b(m-1)}\lf(f_1^2+1\ri)-\frac{m(m-1)}{2}e^{-2b(m-2)}f_1^2\ri)
\\
&+\frac{\varphi}{2}f_2^2+\frac{c_5m^2e^{2b(m-1)}}{a}f_1^4\\
\leq&\varphi\lf(c_3\lf(f_1^4+1\ri)\ri) +f_1^2 \lf(c_4me^{2b(m-1)}\lf(f_1^2+1\ri)-\frac{m(m-1)}{2}e^{-2b(m-2)}f_1^2\ri)
\\&+\frac{c_5m^2e^{2b(m-1)}}{a}f_1^4,\\
\end{split} \ee
where we have used the fact \eqref{e-fair} and $e^{2b}\le 2$ which also imply
\bee
-2g^{ij}\n_i \varphi\n_j f_1^2\le cm(\sum_{i=1}^n\lambda_i^{m-1})f_1^2 f_2
\eee
for some constant $c$ depending only on $n$. Now let $b$, $m$ be such that

({\bf c2}) $b=\frac1{2m}$ and $m\ge 2$ with $e^{1/m}\le 2$.

Note that if $m\geq 2$, we have $m-1\geq \frac{m}{2}$, and $e^{bm}=e^{1/2}$. Hence the above inequality becomes:

  \bee\begin{split}
\square\psi\leq& c_7(a+1)\lf(f_1^4+1\ri)+c_7mf_1^2 \lf(1+ f_1^2\ri)-c_8m^2f_1^4+\frac{c_7m^2}{a}f_1^4. \end{split}
\eee
Let $a=\frac{2c_7}{c_8}$, then $a=a(n)$ which depends only on $n$. We have
 \be\label{e-d1-1}
 \begin{split}
\square\psi\leq& c_9\lf(mf_1^4+mf_1^2+1\ri)-\frac12c_8m^2f_1^4.
\end{split}
\ee
 Since $h$ has bounded curvature, there is a smooth function $\rho(x)$ such that
 $$
 d(p,x)+1\le \rho(x)\le D_1\lf(d(p,x)+1\ri), |\nabla\rho|+|\nabla^2\rho|\le D_1
 $$
 where $d(p,x)$ is the distance function from $p$ with respect to $h$ and $D_1$ is a constant depending on $h$, see \cite{Shi1997,Tam2010}. Let $\bar\eta(s)$ be a smooth function   on $\mathbb{R}$ such that $0\leq\bar\eta\leq 1$, $\bar\eta=1$ for $s\leq 1$, $\bar\eta=0$ for $s\geq 2$, $|\bar\eta'|^2\leq D_2\bar\eta$ and $|\bar\eta''|\leq D_2$.
 For any $r\ge 1$, let
 $$
 F(x,t)=t\eta_r(x)\psi(x)=t\eta_r(x)\varphi(x) f_1^2(x),
 $$
  where $\eta_r(x)=\bar\eta(\frac{\rho(x)}r)$.
By \eqref{e-d1-1}, we have
\bee\begin{split}
&\square F\le t\eta_r \lf[c_9\lf(mf_1^4+mf_1^2+1\ri)-\frac12c_8m^2f_1^4\ri]+\eta_r \psi-t\psi \square \eta_r-2tg^{ij}\n_i\eta_r\n_j \psi.
\end{split}
\eee
Suppose let $F(x_0,t_0)=\max_{(x,t)\in M\times[0,T]}F(x,t)$. Suppose $t_0=0$, then $F(x_0,t_0)=0$. Suppose $t_0>0$, then at $(x_0,t_0)$, $\psi\n_j\eta_r+\eta_r\n_j\psi=0$, and
multiplying the about inequality by $t_0\eta_r$, we have
\bee
\begin{split}
0\le &(t_0\eta_r)^2 \lf[c_9\lf(mf_1^4+mf_1^2+1\ri)-\frac12c_8m^2f_1^4\ri]+t_0\eta_r^2 \psi+D_3r^{-1}t_0^2\eta_r\psi\\
\le& c_{10}(mF^2+mF+1)-c_{11}m^2 F^2+(1+D_3r^{-1})F
\end{split}
\eee
where we have used the fact that $t_0\le T\le 1$, $\eta_r\le 1$, and $c^{-1}\le \varphi\le c$ for some constant $c$ depending only on $n$. Let $m$ be such that

({\bf c3}) $m\ge \frac{2c_{10}}{c_{11}}$.

Then at $(x_0,t_0)$

\bee
0\le -\frac12 c_{11}m^2F^2+\lf(c_{10}m+1+D_3 r^{-1}\ri)F+c_{10}
\eee
 and
 \bee
 F(x_0,t_0)\le \frac{2\lf(c_{10}m+1+D_3 r^{-2}\ri)+\lf(2c_{10} c_{11}m^2\ri)^\frac12}{c_{11}m^2}.
 \eee
Let $r\to\infty$, we conclude that:
\bee
\sup_{M\times[0,T]}t\varphi f_1^2\le \frac{c_{12}}m,
\eee
and so
\be\label{e-d1-2}
\sup_{M\times[0,T]}t|\nabla g|^2\le \frac{c_{13}}m\le \a
\ee
 provided

 ({\bf c4}) $m\ge \frac{c_{13}}\a$.

 Hence if we choose $m$ large enough so that $m$ satisfies ({\bf c3}), ({\bf c4}) such that $e^{1/m}\le 2$. Note that $m$ depends only on $n$ and $\a$. Then choose $b=\frac1{2m}$ and satisfies ({\bf c1}). $b$ also satisfies   ({\bf c2}). Then $b$ depends only on $n, \a$. For this choice of $m, b$ we conclude the lemma is true by \eqref{e-d1-2}.

 \end{proof}

 \begin{lma}\label{l-d2-estimate} Let $(M^n,h)$ be a complete noncompact Riemannian manifold such that $|\Rm(h)|\le k_0$,  $|\nabla\Rm(h)|\le k_1$, $|\nabla^2\Rm(h)|\le k_2$ with $k_0+k_1+k_2\leq 1$. For any $\a>0$ there is a constant $b(n,\a)>0$ depending only on $n$ and $\a$ such that $e^{2b}\le 1+\e(n)$ where $\e(n)$ is the constant in Theorem \ref{t-Simon}, and if $g(t)$ is the solution of the $h$-flow on $M\times [0,T]$, $T\leq 1$ obtained in Theorem \ref{t-Simon} with $g(0)=g_0$ satisfies $e^{-b}h\le g_0\le e^bh$, then there is a $T\ge T_2(n,\a)>0$ depending only on $n,\a$ such that
 \bee
 |\nabla^2 g(t)|^2\le \frac {\a^2} {t^2}
 \eee
 for all $t\in (0,T_2]$.
 \end{lma}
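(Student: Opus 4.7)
The strategy is to repeat the Bernstein-type argument from Lemma \ref{l-d1-estimate}, now applied to $f_2^2 := |\nabla^2 g|^2$, making essential use of the first-order bound just proved. Set $f_p := |\nabla^p g|$ throughout.

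The plan has three main steps. First, apply Lemma \ref{l-d1-estimate} with a small parameter $\alpha_1 = \alpha_1(n,\alpha) > 0$ to be fixed later, yielding $f_1^2 \le \alpha_1/t$ on some interval $(0,T_1]$ (with the correspondingly small $b$). Second, form, exactly as before, the test function
$$F(x,t) = t^{2}\eta_r(x)\varphi f_2^2,$$
where $\varphi = a(n) + \sum_\mu \lambda_\mu^m$ with integer $m$ to be chosen, and $\eta_r$ is the spatial cutoff based on the $h$-distance function as in Lemma \ref{l-d1-estimate}. Third, bound $\square F$ from above at an interior maximum and let $r\to\infty$.

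For the third step, I use \eqref{boxddg}, the hypothesis $k_0+k_1+k_2 \le 1$, the metric closeness $e^{-2b}h\le g(t) \le e^{2b}h$, and the first-order bound $f_1^2 \le \alpha_1/t$ to estimate
$$\square f_2^2 \le -\tfrac{3}{2} f_3^2 + c\Bigl(f_2^3 + \tfrac{\alpha_1}{t}f_2^2 + f_2^2 + f_2 + \tfrac{\alpha_1^2}{t^2} f_2 + 1\Bigr),$$
after using the Kato inequality $|\nabla f_2|\le f_3$ and Cauchy-Schwarz $f_1 f_2 f_3 \le \tfrac{1}{4} f_3^2 + f_1^2 f_2^2$ to absorb the $f_3$-cross term into the gradient term $-2f_3^2$ from \eqref{boxddg}. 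The scale-invariant cubic $f_2^3$ (coming from the $g^{-1}\ast g^{-1}\ast\nabla^2 g\ast\nabla^2 g\ast\nabla^2 g$ contribution) is demoted to $(C(n)/t)f_2^2$ by invoking Shi's a priori bound $f_2 \le C(n)/t$ from Theorem \ref{t-Simon}. Computing $\square\varphi$ and the cross term $-2g^{ij}\nabla_i\varphi\,\nabla_j f_2^2$ as in Lemma \ref{l-d1-estimate} (bounding the latter by $\epsilon\varphi f_3^2 + c m^2 e^{4b(m-1)} f_1^2 f_2^2/\epsilon$ via Kato plus Cauchy-Schwarz, with $\epsilon$ small enough that $\epsilon\varphi \le \tfrac14$), the contribution $m^2 e^{4b(m-1)} f_1^2 f_2^2$ is absorbed by the negative term $-\tfrac{m(m-1)}{2}e^{-2b(m-2)} f_1^2 f_2^2$ coming from $f_2^2 \square\varphi$, provided $m$ is large. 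At an interior maximum point of $F$ this leads to a quadratic inequality of the form
$$0 \le -c_1(n) m^2 F^2 + c_2(n,\alpha_1) m F + c_3(n,\alpha_1) + O(r^{-1}),$$
which on sending $r\to\infty$ yields $t^2 \varphi f_2^2 \le c_4(n)/m$. Choosing $m = m(n,\alpha)$ large enough that $c_4(n)/m \le \alpha^2$, then $b = 1/(2m)$, and finally $\alpha_1$ small enough so that all the constants in Lemma \ref{l-d1-estimate} cooperate with those in the current estimate, gives the desired bound on some $[0,T_2]$ with $T_2 = T_2(n,\alpha)$.

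The principal obstacle is the $f_2^3$ term in \eqref{boxddg}, which is scale-invariant under parabolic rescaling and hence cannot be dispatched by shrinking the time interval alone. The resolution is to bootstrap off Shi's rough bound $f_2 \le C/t$ from Theorem \ref{t-Simon} to convert $f_2^3$ into $(C/t)f_2^2$, placing it on the same footing as the $(\alpha_1/t) f_2^2$ arising via $f_1^2 \le \alpha_1/t$; both then contribute only a term linear in $F$ at the maximum, which is harmless against the $-m^2 F^2$ produced by the Bernstein trick. Keeping the dependencies consistent -- $m$ depending on $\alpha$, $b$ on $m$, and $\alpha_1$ small enough to satisfy both the hypothesis of Lemma \ref{l-d1-estimate} and the absorption condition here -- is the main bookkeeping task.
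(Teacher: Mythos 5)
Your overall architecture (bootstrap off the first-order bound, weighted test function, maximum principle, then tune constants) matches the paper's, but there is a genuine gap at the heart of the argument: your scheme produces no negative term that is \emph{quartic} in $f_2$, so the claimed inequality $0\le -c_1(n)m^2F^2+c_2mF+c_3$ at the maximum has no source for its $-m^2F^2$ term. With $F=t^2\eta_r\varphi f_2^2$ and $\varphi$ the eigenvalue polynomial from Lemma \ref{l-d1-estimate}, one has $F^2\sim t^4\varphi^2f_2^4$, whereas the only negative contribution your weight generates is $f_2^2\,\square\varphi\le -\tfrac{m(m-1)}{2}e^{-2b(m-2)}f_1^2f_2^2+\dots$, which is quadratic in $f_1$ times quadratic in $f_2$ and in no way comparable to $f_2^4$ (indeed one expects $f_1^2\sim\a_1/t$ to be small precisely where $f_2^2\sim 1/t^2$ is large). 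In Lemma \ref{l-d1-estimate} the analogous term $-m^2f_1^4$ happens to be quadratic in the quantity $f_1^2$ being estimated, which is why the trick closes there; that coincidence does not persist at second order. As you note, this term can absorb the cross term $m^2f_1^2f_2^2$, but after that absorption every remaining term in $\square F$ is at most \emph{linear} in $F$ (your Shi-based demotion $f_2^3\le (C(n)/t)f_2^2$ only reinforces this), and a differential inequality $\square F\le CF/t+\dots$ with no $-F^2$ gives no smallness at an interior maximum.

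The paper's fix is a different weight: $\psi=(at^{-1}+f_1^2)f_2^2$ with $a=2c_6\b$. The $f_1^2$ factor is the crucial one, since by \eqref{boxdg1} it contributes $f_2^2\,\square f_1^2\le -\tfrac12 f_2^4+c_5f_2^2(f_1^4+1)$, i.e.\ exactly the missing negative quartic; the $at^{-1}$ part contributes $-at^{-2}f_2^2$ and, being bounded below by $at^{-1}\ge 2c_6f_1^2$, lets the coefficient of $f_3^2$ stay nonpositive after the cross term $f_1f_3f_2^2$ is split off. With $-\tfrac18f_2^4$ in hand, the cubic $(at^{-1}+f_1^2)f_2^3$ is dispatched by Young's inequality at the cost of $c(\b/t)^4$ (no appeal to Shi's a priori bound $f_2\le C(n)/t$ is needed), and the maximum-principle argument with $F=t^3\eta_r\psi$ then yields $t^2f_2^2\le c_{11}\b$, which is made $\le\a^2$ by choosing $\b$ small. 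If you want to salvage your version, you must replace $\varphi$ by a weight whose heat operator produces $-cf_2^2$ (such as $f_1^2$, suitably supplemented by $at^{-1}$); the eigenvalue polynomial alone cannot do the job.
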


\begin{proof} As in the proof of the previous lemma, let $f_i=|\nabla^ig|$.  Let $b>0$ be such that $e^b\le 1+\e$ where $\e=\e(n)$ is the constant in Theorem \ref{t-Simon}. Suppose $h$ is $e^b$ close to $g_0$ and let $g(t)$ be the solution of the $h$-flow with $g(0)=g_0$ obtained in Theorem \ref{t-Simon}. Let $\beta>0$ to be chosen later depending only on $n, \a$. By Lemma \ref{l-d1-estimate}, we assume that:

({\bf c1}) $e^{2b}\le 2$ is such that there is $T\ge T_1(n,\beta)>0$ and $f_1^2\le \frac{\beta}t$ on $M\times[0,T_1]$.

Note that $b$ depends only on $n$ and $\beta$.  We may also assume that
$$
e^{-2b}h\le g\le e^{2b}g.
$$
as in the proof of the previous lemma. As before, in the following, lower case $c_i$ will denote a positive constant depending only on $n$.

 By \eqref{boxddg}, we have
 \be\label{boxddg1}
 \begin{split}
\square f_2^2 \leq& -f_3^2+ c_1( f_2+ f_1 f_2+ f_1^2 f_2+f_2^2+ f_1f_2f_3 +f_2^3+f_1^2f_2^2+f_1^4f_2)\\
\le &-\frac12 f_3^2+c_2\lf( f_2+ f_1 f_2+ f_1^2 f_2+f_2^2+f_2^3+f_1^2f_2^2+f_1^4f_2\ri) \\
\le &-\frac12 f_3^2+c_3\lf(f_2^2+f_2^3+f_1^2f_2^2+f_1^2+f_1^4+f_1^6+1\ri)\\
\le &-\frac12 f_3^2+c_4\lf(f_2^3+\frac\b t f_2^2+\lf(\frac\b t\ri)^3+1\ri)
\end{split}
\ee
provided that

({\bf c2}) $t\le \beta$.

Here we have used that fact that $f_0\le c$, $f_1\le \frac{\beta}t$ and $k_0+k_1+k_2\le 1.$
In the following, we always assume that ({\bf c2}) is true.

Let $\psi(x,t)=\lf(at^{-1}+f_1^2\ri)f_2^2$, where $a>0$ is a constant depending only on $n$ and $\beta$ and will be chosen later.

Combine \eqref{boxdg1} and \eqref{boxddg1}, we have
\be\label{e-ddg-1}
\begin{split}
\square\psi=&   (at^{-1}+f_1^2)\square f_2^2+f_2^2\square f_1^2  -2g^{ij}\n_i(f_1^2)\cdot\n_j(f_2^2)-at^{-2}f_2^2\\
\leq
& (at^{-1}+f_1^2)\lf(-\frac12 f_3^2+c_4\lf(f_2^3+\frac\b t f_2^2+\lf(\frac\b t\ri)^3+1\ri)\ri)\\
&+f_2^2\lf(-\frac{1}{2}f_2^2+c_5\lf(f_1^4+1\ri)\ri) +c_5f_1f_3f_2^2 \\
\leq& \lf(c_6f_1^2-\frac12at^{-1}\ri)f_3^2+c_4(at^{-1}+f_1^2)\lf(f_2^3+\frac\b t f_2^2+\lf(\frac\b t\ri)^3+1\ri)\\
&+f_2^2\lf(-\frac{1}{4}f_2^2+c_5\lf(f_1^4+1\ri)\ri)   \\
\le &-\frac18 f_2^4+c_6\lf(\frac\b t\ri)^4
 \end{split}
\ee
where we have chosen $a$ so that

({\bf c3}) $a=2c_6\beta$ which depends only on $n$ and $\beta$.

Here we have used the fact that $t\le \b$.

For $r>1$, let  $\rho$, $\bar\eta$, $\eta_r$ as in the proof of the previous lemma, and let $F=t^p\eta_r\psi=t^p\eta_r(at^{-1}+f_1^2)f_2^2$, $p\ge2$. Let
$$
F(x_0,t_0)=\max_{(x,t)\in M\times[0,T_1]}F(x,t).
$$
If $t_0=0$, then $F(x_0,t_0)=0$. If $t_0>0$, then at $(x_0,t_0)$, we have by \eqref{e-ddg-1}, as in the proof of the previous lemma:

\bee
\begin{split}
0\le& t_0^p\eta_r\lf(-\frac18 f_2^4+c_6\lf(\frac\b {t_0}\ri)^4\ri)+pt_0^{p-1}\eta_r\psi+t_0^p\psi\square \eta_r
-2t_0^p\n_i\eta_r\n_j\psi\\
\le&t_0^p\eta_r\lf(-\frac18 f_2^4+c_6\lf(\frac\b {t_0}\ri)^4\ri)+pt_0^{p-1}\eta_r\psi+D_1r^{-1}t_0^p\psi
\end{split}
\eee
where $D_1$ depends on $h$. Multiply both sides   by $t_0^p\eta_r(at_0^{-1}+f_1^2)^2$ using the fact that $t_0\le 1, \eta_r\le 1$ and that $t_0\le \beta$, we have
\bee
\begin{split}
 \frac18 F^2\le&c_6t_0^{2p}\eta_r (at_0^{-1}+f_1^2)^2\lf(\frac\b {t_0}\ri)^4+ pt_0^{2p-1}\eta_r^2(at_0^{-1}+f_1^2)^2\psi+D_1r^{-1}t_0^{2p}\eta_r(at_0^{-1}+f_1^2)^2\psi\\
 \le &c_7t_0^{2p}\lf(\frac\b {t_0}\ri)^6+c_8F\lf(\frac\b {t_0}\ri)^2\lf(pt_0^{p-1}+D_1r^{-1}t_0^p\ri).
 \end{split}
\eee
Let $p=3$, we have
\bee
\frac18 F^2\le c_9\lf(\b^6+\b^2 F\lf(1+D_1r^{-1}\ri)\ri)
\eee
Hence
\bee
F(x_0,t_0)\le c_{10}\lf(\b^3+\b^2(1+D_1r^{-1})\ri).
\eee
Let $r\to\infty$, we conclude that
$$
\sup_{(x,t)\in M\times[0,T_2]}t^3(at^{-1}+f_1^2)f_2^2\le c_{10}\lf(\b^3+\b^2\ri).
$$
where $T_2=\min\{T_1,\b\}$. By the definition of $a$, we conclude that
$$
t^2|\nabla^2g|^2\le c_{11}\b
$$
provided $\b\le 1$. Now choose $\b\le1$ such that $c_{11}\b<\a^2$. $\b$ depends only on $n, \a$. Choose choose $b$ satisfying ({\bf c1}) and $a$ satisfying ({\bf c3}). If $t\le \b, T_1$, then we have
$$
t^2|\nabla^2g|^2\le \a^2
$$
in $M\times[0,T_2]$, where $T_2=\min\{T_1,\b\}$. This completes the proof of the lemma.

\end{proof}

 \begin{lma}\label{l-Ricciflow}
 For any $\a>0$, there exists $\e(n,\a)>0$ depending only on $n$ and $\a$ such that if $(M^n,g_0)$ is a complete noncompact  Riemannian manifold with real dimension $n$ and if $g_0$  is $(1+\e)$ close to a Riemannian metric $h$ with curvature bounded by $k_0$, then there is a smooth complete Ricci flow $g(t)$   defined on $M\times[0,T]$ with initial value $g(0)=g_0$, where $T>0$ depends only on $n, k_0$. Moreover, there is $T_1(n,k_0,\a)>0$ depending only on $n, \a$ such that the curvature of $g(t)$ satisfies:
 $$
 |\Rm(g(t))|_{g(t)}\le \frac \a t
 $$
on $M\times[0,T_1]$.
 \end{lma}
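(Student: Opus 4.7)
The plan is to produce the Ricci flow by first running the $h$-flow of Simon and then gauging it by a time-dependent family of diffeomorphisms. Since Simon's $h$-flow does not require bounded curvature of the initial metric, this gives the correct way to handle the possibly unbounded curvature of $g_0$. The smallness of $\alpha$ in the curvature bound $|\Rm(g(t))|\le \alpha/t$ will be extracted from the smallness of $\epsilon$ through Lemmas \ref{l-d1-estimate} and \ref{l-d2-estimate}.

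First, I would choose $\epsilon(n,\alpha)$ so small that Theorem \ref{t-Simon}, Lemma \ref{l-d1-estimate} and Lemma \ref{l-d2-estimate} all apply with some auxiliary constant $\alpha'=\alpha'(n,\alpha)\le 1$ to be fixed below. This yields an $h$-flow solution $\hat g(t)$ on $M\times[0,T]$, with $T=T(n,k_0)>0$, such that $e^{-2b}h\le \hat g(t)\le e^{2b}h$, together with the refined bounds
\[
|\nabla \hat g(t)|^2\le \frac{\alpha'}{t},\qquad |\nabla^2\hat g(t)|^2\le \frac{(\alpha')^2}{t^2}
\]
on $M\times[0,T_2]$ for some $T_2=T_2(n,\alpha')>0$. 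Since the curvature of $\hat g(t)$ can be written schematically as $\Rm(\hat g)=\Rm(h)+\hat g^{-1}\ast\nabla^2\hat g+\hat g^{-1}\ast\hat g^{-1}\ast \nabla\hat g\ast\nabla\hat g$, the above estimates combined with $k_0\le 1$ give $|\Rm(\hat g(t))|_{\hat g(t)}\le c(n)\alpha'/t$ on $M\times[0,T_3]$ after shrinking $T_2$ to some $T_3$ depending on $n,\alpha'$. Choosing $\alpha'=\alpha/c(n)$ produces the desired bound $|\Rm(\hat g(t))|_{\hat g(t)}\le \alpha/t$, and the dependence of $\epsilon$ is now only on $n$ and $\alpha$.

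Next, I would convert the $h$-flow into a genuine Ricci flow by pulling back through diffeomorphisms. Define a time-dependent vector field by $V^i(t)=\hat g^{ij}\hat g^{kl}(\Gamma^{j}_{kl}(\hat g(t))-{}^h\Gamma^{j}_{kl})$, and solve the ODE
\[
\frac{\partial}{\partial t}\phi_t(x)=-V(\phi_t(x),t),\qquad \phi_0=\operatorname{id}.
\]
Since $V$ is a contraction of $\hat g^{-1}$ with $\nabla \hat g$, the bound on $|\nabla \hat g|$ gives $|V|_h(\cdot,t)\le C(n)\sqrt{\alpha'}\,t^{-1/2}$, which is integrable on $[0,T_3]$. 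Hence $\phi_t$ exists for all $t\in(0,T_3]$, extends continuously to $\phi_0=\operatorname{id}$, and is a smooth diffeomorphism for $t>0$. Setting $g(t):=\phi_t^*\hat g(t)$, a standard computation using the formula $\partial_t\hat g=-2\Ric(\hat g)+\mathcal{L}_V\hat g$ shows that $g(t)$ solves the Ricci flow with $g(0)=g_0$, and since $\Rm$ transforms tensorially under diffeomorphisms the curvature bound is preserved: $|\Rm(g(t))|_{g(t)}\le \alpha/t$ on $M\times[0,T_1]$ with $T_1=T_3$. Completeness of $g(t)$ follows from the uniform equivalence $e^{-2b}h\le \hat g(t)\le e^{2b}h$ together with the fact that $\phi_t$ is a diffeomorphism of $M$ onto itself.

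The main obstacle is controlling the behaviour of the diffeomorphisms $\phi_t$ as $t\downarrow 0$: a priori $V$ blows up like $t^{-1/2}$, so one must verify that $\phi_t$ extends to the identity at $t=0$ in the $C^0$ (and weak) sense strong enough to ensure that $g(t)=\phi_t^*\hat g(t)\to g_0$ uniformly on compact sets. The integrability $\int_0^{T_3}|V|_h\,dt<\infty$ is what saves us, but one also has to check that derivatives of $\phi_t$ remain controlled so that the pull-back $g(t)$ is smooth in space-time for $t>0$ and attains $g_0$ continuously at $t=0$; this is carried out by a standard Gronwall-type argument using the bounds on $|V|$, $|\nabla V|$ derived from Lemmas \ref{l-d1-estimate}, \ref{l-d2-estimate} and the Shi-type estimate $|\nabla\Rm|\le C/t^{3/2}$.
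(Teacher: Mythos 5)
The overall strategy (run Simon's $h$-flow, then gauge back to a Ricci flow, using Lemmas \ref{l-d1-estimate} and \ref{l-d2-estimate} to make the curvature constant small) is the same as the paper's, but there is a genuine gap in the step where you convert the $h$-flow into a Ricci flow, and the paper takes a materially different route precisely to avoid that gap.

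You solve the DeTurck ODE $\partial_t\phi_t=-V(\phi_t,t)$, $\phi_0=\operatorname{id}$, directly for the $h$-flow $\hat g(t)$ starting from $g_0$. Since $g_0$ may have unbounded curvature, the only available bounds are $|\nabla\hat g|\le C t^{-1/2}$ and $|\nabla^2\hat g|\le Ct^{-1}$. You correctly observe that $|V|\lesssim t^{-1/2}$ is integrable, so the integral curves exist down to $t=0$. But that is not enough: to conclude that $\phi_t$ is a diffeomorphism with controlled Jacobian, and that $g(t)=\phi_t^*\hat g(t)\to g_0$ smoothly as $t\to0$, you must control $D\phi_t$, which satisfies a linear ODE whose coefficient is $\nabla V$. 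Here $\nabla V$ involves $\nabla^2\hat g$ and $(\nabla\hat g)^2$, both of size $t^{-1}$, so $\int_0^T|\nabla V|\,dt=\infty$ and the ``standard Gronwall-type argument'' you invoke does not close: the resulting bound $\|D\phi_t\|\le\exp(\int_0^t|\nabla V|)$ is vacuous, and without a bound on $D\phi_t$ the pullback $\phi_t^*\hat g(t)$ need not converge to $g_0$ (and need not even be a metric uniformly equivalent to $h$). This is exactly the obstacle your last paragraph flags, but it is not resolved by the tools you cite.

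The paper sidesteps this by a truncation-and-limit scheme rather than running DeTurck on the $h$-flow from $g_0$. One sets $g_{R,0}=\eta_R g_0+(1-\eta_R)h$, which equals $g_0$ on $B_{g_0}(x_0,R)$ and equals $h$ outside $B_{g_0}(x_0,2R)$; $g_{R,0}$ is still $e^b$-close to $h$ and, crucially, has \emph{bounded} curvature for each fixed $R$. For these initial data one has $|\widetilde\nabla\bar g_R(t)|\le C(n,h,g_0,R)$ uniformly in $t$ (no $t^{-1/2}$ blowup), so the DeTurck diffeomorphisms $\varphi_R(t)$ exist, are smooth on $[0,T_2]$, and the gauged metrics $g_R(t)=\varphi_R(t)^*\bar g_R(t)$ are honest Ricci flows with $g_R(0)=g_{R,0}$, inheriting the bound $|\Rm(g_R(t))|\le 3c_1\beta/t$. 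One then gets $R$-independent local curvature bounds via Chen and Simon, applies Shi's interior estimates, and passes to a $C^\infty_{\mathrm{loc}}$ limit as $R\to\infty$ to produce the Ricci flow with $g(0)=g_0$. Completeness of the limit is obtained by comparing $d_{g_R(t)}$ with $d_h$ using the (integrable) bound on $W$ and the uniform equivalence $\bar g_R(t)\sim h$. So the truncation is not a cosmetic detail: it is what makes the DeTurck step legitimate. To repair your argument you would either need a cancellation in $\nabla V$ that makes it integrable near $t=0$ (which is not available from Lemmas \ref{l-d1-estimate}--\ref{l-d2-estimate} alone), or you would need to adopt the paper's truncation/exhaustion approach.
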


\begin{proof} First we remark that by \cite{Shi1989}, there is a solution to the Ricci flow with initial data $h$  with bounded curvature in space and time. Moreover, for $t>0$ all order of derivatives of the curvature tensor for a fixed $t>0$ are uniformly bounded, the solution exists in a time interval depending only on $n, k_0$, and the bounds of the derivatives of the curvature tensor for a fixed $t>0$ depend only on $n, k_0,$ and $t$. Hence without lost of generality, we may assume that $|\tn^{(i)}\wt\Rm|_{h}\le k_i<\infty$ for all $i\ge0$. Here and in the following   $\tn$ is the covariant derivative with respect to $h$ and $ \wt\Rm$ is the curvature tensor of $h$ and $|\,\cdot\,|_{h}$ is the norm relative to $h$.

Note that if $h$ is $1+\e$ close to $g_0$, then $\lambda h$ is also $1+\e$ close to $\lambda g_0$ for any $\lambda>0$. Moreover, if $g(t)$ is a solution to the Ricci flow with initial data $g_0$, then $\lambda g(\lambda^{-1}t)$ is a solution to the Ricci flow with initial data $\lambda g_0$, and if $s=\lambda t$, then
$$
|\Rm(g(t))|_{g(t)}=\lambda|\Rm(\lambda g(\lambda^{-1}s))|_{\lambda g(\lambda^{-1}s)}.
$$
Hence we may assume that $k_0+k_1+k_2\le 1$.

Let us first assume that $\e(n,\a)<\e(n)$ where $\e(n)$ is the constant in Theorem \ref{t-Simon}.

For any $R>0$, let $0\le \eta_R$ be a smooth function on $M$ such that
\bee
\eta_R=\left\{\begin{array}{cl}
       1, & x\in B_{g_0}(x_0,R)  \\
       0, & x\in M\setminus B_{g_0}(x_0,2R)
     \end{array} \right.
\eee
Let $g_{R,0}=\eta_Rg_0+(1-\eta_R)h$. Let $\b>0$ to be chosen later depending only on $n, \a$. Suppose $h$ is $e^b=(1+\e)$ close to $g_0$, where $b>0$, then one can see that $h$ is also $e^b$ close to $g_{R,0}$ for all $R>0$.
By Lemmas \ref{l-d1-estimate}, \ref{l-d2-estimate}, there is a constant $b>0$ depending only on $n, \b$ such that for all $R>0$ the solution $\bar g_R(t)$ to the $h$-flow as in Theorem \ref{t-Simon} exists on $M\times[0,T_2]$ for some $T_2>0$ depending only on $n, \b$ such that
\be\label{e-shorttime-1}
|\tn^i \bar g_R(t)|^2_{h}\le \frac{C_i}{t^i}
\ee
for all $i\ge 0$, where $C_i$ depends only on $n, i, k_0,\dots, k_i$. Moreover,
\be\label{e-shorttime-2}
|\bar g_R(t)|_{h}\le 2, |\tn^i \bar g_R(t)|^2_{h}|\le \frac{\b^i}{t^i} \ \ \text{for $i=1, 2$.}
\ee

Now we want to claim that there is a constant $c_1=c_1(n)$ depending only on $n$ such that
\be\label{e-Rm}
|\Rm(\bar g_R(t))|_{g_R(t)}\le c_1\lf(|\wt \Rm|_{h}+|\tn \bar g_R(t)|^2_{h} +|\tn^2 \bar g_R(t)|_{h}\ri).
\ee

To see \eqref{e-Rm}, we choose a normal coordinate at any fix point $x\in M$ with respect to $h$ and it also diagonalizes  $\bar g_R(t)$. In this coordinate, we have \bee \begin{split}
\bar R^l_{ijk}=&\frac{\partial}{\partial x^i}\bar\Gamma^l_{kj}-\frac{\partial}{\partial x^j}\bar\Gamma^l_{ki}+\bar\Gamma^h_{kj}\bar\Gamma^l_{hi}-\bar\Gamma^h_{ki}\bar\Gamma^l_{hj}\\
=&\frac{\partial}{\partial x^i}(\bar\Gamma^l_{kj}-\wt\Gamma^l_{kj})-\frac{\partial}{\partial x^j}(\bar\Gamma^l_{ki}-\wt\Gamma^l_{ki})\\
&+\frac{\partial}{\partial x^i}\wt\Gamma^l_{kj}-\frac{\partial}{\partial x^j}\wt\Gamma^l_{ki}+\bar\Gamma^h_{kj}\bar\Gamma^l_{hi}-\bar\Gamma^h_{ki}\bar\Gamma^l_{hj}\\
=&\wt R^l_{kij}+\wt\n_i(\bar\Gamma^l_{kj}-\wt\Gamma^l_{kj})-\wt\n_j(\bar\Gamma^l_{ki}-\wt\Gamma^l_{ki})+\bar\Gamma^h_{kj}\bar\Gamma^l_{hi}-\bar\Gamma^h_{ki}\bar\Gamma^l_{hj}.\end{split} \eee
Here we use $\bar \cdot$ to denote the Christoffel symbol and curvature tensor of the metric $\bar g_R(t)$.
Note that \bee \bar\Gamma^l_{kj}-\wt\Gamma^l_{kj}=\frac{1}{2}g^{ls}(\wt\n_kg_{js}+\wt\n_jg_{ks}-\wt\n_sg_{kj}), \eee
we have \bee \wt\n_i(\bar\Gamma^l_{kj}-\wt\Gamma^l_{kj})=g^{-1}\ast g^{-1}\wt\n g\ast\wt\n g+g^{-1}\ast\wt\n^2 g \eee and \bee
\bar\Gamma\ast\bar\Gamma=g^{-1}\ast g^{-1}\ast\wt\n g\ast\wt\n g. \eee

Therefore, we obtain \eqref{e-Rm}.

Then, we have
\be\label{e-shorttime-3}
|\Rm(\bar g_R(t))|_{g_R(t)}\le\frac{3c_1\beta}t
\ee
provided on $M\times[0,T_2]$ provided $T_2$ is small depending only on $n, \beta, k_0$. Here we have used the fact that $h$ is $1+2\e(n)$ close to $\bar g_R(t)$.

 Using the similar argument as in  the proof of \cite[Lemma 4.1]{Simon2002} or Lemma \ref{l-timederivative-1}, one can show that
 $$
 |\tn  \bar g_R(t)|\le C(n,h,g_0,R)
 $$
 for some constant $C(n,h,g_0,R)$ depending only on $n, h, g_0, R$. Hence  we can pull back $\bar g_R(t)$ by a smooth family   of diffeomorphisms from $M$ to itself $\varphi_R(t)$, $t\in[0,T]$. That is, let $g_R(t)=\varphi_R(t)^*\bar g_R(t)$ on $M\times[0,T_2]$ where  $\varphi_R(t)$, $t\in[0,T_2]$ is given by solving the following ODE at each point $x\in M$:\be\label{ode} \left\{\begin{array}{cc}
\frac{d}{dt} \varphi_R(x,t)=-W(\varphi_R(x,t),t) \\
\varphi_R(x,0)=x
\end{array} \right.
\ee
where $W$ is a time-dependent smooth vector field
given by \bee W^i(t)=\bar g_R^{jk}(t)({}^{\bar g_R(t)}\Gamma^i_{jk}-{}^{h}\Gamma^i_{jk}).
\eee
Then $g_R(t)$ is a solution to the Ricci flow with $g_R(0)=g_{R,0}$. By \eqref{e-shorttime-3}

 \bee\label{1}
|Rm(g_R(t))|_{g_R(t)}\leq \frac{3c_1\b }{t} \eee
 on $M\times(0,T_2]$. Since $g_{R,0}$ has uniformly bounded curvature, which may depends on $R$, by \cite{Chen,Simon2008} for any compact set $U$, there is a constant $C_1$ independent of $R$ such that

\bee
|Rm(g_R(t))|_{g_R(t)}\leq C_1\eee on $  U\times[0,T_2]$. By \cite{Shi1989} (see also \cite[Theorem 11]{LuTian}), we see that for each $m$, there is a constant $C(m)$ independent of $R$ such that
\bee
|{}^{g_R(t)}\n^mRm_{g_R(t)}|_{g_R(t)}\leq C(m) \eee on $  U\times [0,T_2]$. From this, we obtain that \bee
|{}^{g_R(t)}\n^mg_R(t)|_{g_R}\leq C(m) \eee on $U\times[0,T_2]$ for some constant $C(m)$ independent of $R$. Hence by diagonal process, passing to a subsequence, $g_R(t)$ converges in $C^\infty$ topology on compact sets of $M\times[0,T_2]$ to a solution $g(t)$ of the Ricci flow with $g(0)=g_0$. Moreover, by \eqref{e-shorttime-3},
$$
|\Rm(g(t))|_{g(t)}\le \frac{3c_1\b}t.
$$

Next, we claim $g(t)$ is complete for all $t\in[0,T_2]$.
Let $\{y_k\}$ be a divergence  sequence of points in  $M$.
For any fixed point $x_0$ and $t\in[0,T_2]$, we have
\bee\begin{split}
d_{g_R(t)}(x_0,y_k)=&d_{\bar g_R(t)}(\varphi_R(x_0,t),\varphi_R(y_k,t))\\
\geq&d_{\bar g_R(t)}(x_0,y_k)-d_{\bar g_R(t)}(\varphi_R(x_0,t),x_0)-d_{\bar g_R(t)}(y_k,\varphi_R(y_k,t)), \end{split} \eee
for some positive constants $C_3, C_4$ independent of $R, y_k$, where we have used \eqref{e-shorttime-1} which implies $W(x,t)$ in \eqref{ode} is uniformly bounded by a $Ct^{-\frac12}$ for some constant $C$ for all $x, t$ and $ R$, and we have also used the fact that $(1+2\e)^{-1}h\le g_R\le (1+2\e)h$ for all $R$ and $t\in [0,T]$. This implies \bee d_{g_R(t)}(x_0,y_n)\geq C_3d_h(x_0,y_n)-C_4\sqrt{T_2}, \eee

Let $R\to +\infty$, we see that
\bee
d_{g(t)}(x_0,y_k)\geq C_3d_h(x_0,y_n)-C_4\sqrt{T_2}.
\eee
Since $h$ is complete,   we obtain $
d_{g(t)}(x_0,y_k)\to  \infty$ as $k\to\infty$.  This implies $g(t)$ is complete.

 Now choose $\b$ such that $3c_1\b=\a$, we conclude that the lemma is true.
\end{proof}

 Now we want to prove the main result of this section:

 \begin{thm}\label{t-existence-fair}
 There exists $\e(2n)>0$ depending only on $n$ such that if $(M^n,g_0)$ is a complete noncompact  \K manifold with complex dimension $n$ and if there is a smooth Riemannian metric $h$ with   curvature bounded by $k_0$ on $M$ such that $g_0$  is $(1+\e(n))$ close   $h$, then there is a complete \KR flow $g(t)$ defined on $M\times[0,T]$ with initial value $g(0)=g_0$, where $T>0$ depends only on $n, k_0$. Moreover, the curvature of $g(t)$ satisfies:
 $$
 |\Rm(g(t))|_{g(t)}\le \frac \a t
 $$
 where $\a=\a(n)$ is the constant in Theorem \ref{nonnegative-bi}. If in addition, $g_0$ has nonnegative holomorphic bisectional curvature, then $g(t)$ has nonnegative holomorphic bisectional curvature for all $t\in [0,T]$.
 \end{thm}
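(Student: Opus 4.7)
The proof is essentially a clean synthesis of the three main technical results already established in the paper: Lemma~\ref{l-Ricciflow} for existence of a Ricci flow with small $a/t$ curvature bound, Theorem~\ref{t-Kahler} for preservation of the K\"ahler condition, and Theorem~\ref{nonnegative-bi} for preservation of nonnegative holomorphic bisectional curvature. The plan is to choose the constants in the right order so that everything fits together.

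First, I would fix $\alpha = \alpha(n) \in (0,1)$ to be the constant from Theorem~\ref{nonnegative-bi}, which depends only on $n$. Then apply Lemma~\ref{l-Ricciflow} with this choice of $\alpha$ to obtain $\e(n) := \e(n, \alpha(n)) > 0$, depending only on $n$. Under the hypothesis that $g_0$ is $(1+\e(n))$ close to $h$ and $h$ has curvature bounded by $k_0$, Lemma~\ref{l-Ricciflow} produces a smooth complete Ricci flow $g(t)$ on $M \times [0,T]$ with $g(0) = g_0$, where $T$ depends only on $n$ and $k_0$, satisfying
\bee
|\Rm(g(t))|_{g(t)} \le \frac{\alpha(n)}{t}
\eee
for all $t \in (0,T]$ (after shrinking $T$ if necessary to the minimum of the existence time and the time interval on which the curvature bound holds, both of which are controlled by $n$ and $k_0$).

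Second, since $g_0$ is K\"ahler by hypothesis and $g(t)$ satisfies the curvature bound required by Theorem~\ref{t-Kahler} (with $a = \alpha(n)$), we conclude that $g(t)$ is K\"ahler for every $t \in [0,T]$. Thus $g(t)$ is in fact a solution of the \KRF with initial data $g_0$. Third, if $g_0$ additionally has nonnegative holomorphic bisectional curvature, the bound $|\Rm(g(t))|_{g(t)} \le \alpha(n)/t$ is precisely the hypothesis of Theorem~\ref{nonnegative-bi}. Applying that theorem yields that $g(t)$ preserves nonnegativity of the holomorphic bisectional curvature throughout $[0,T]$.

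There is no substantive obstacle at this stage; all the hard analytic work---the refined $\mathcal{C}^1$ and $\mathcal{C}^2$ derivative estimates for the $h$-flow, the maximum principle for unbounded curvature and induction on $\partial_t^k$ needed to preserve K\"ahlerity, and the delicate null-vector/barrier argument used to preserve bisectional positivity---has been carried out in the earlier sections. The only subtlety in the present proof is the order in which the constants are chosen: one must fix $\alpha(n)$ \emph{before} choosing $\e$, because the threshold $\e(n,\alpha)$ in Lemma~\ref{l-Ricciflow} depends on the target curvature constant $\alpha$. With this ordering, $\e(n)$ becomes a function of $n$ alone, as required by the statement.
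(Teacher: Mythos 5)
Your proposal is correct and follows exactly the paper's argument: the authors also deduce the theorem directly from Lemma~\ref{l-Ricciflow} combined with Theorems~\ref{t-Kahler} and~\ref{nonnegative-bi}, with the same ordering of constants (fix $\a(n)$ first, then choose $\e$ via Lemma~\ref{l-Ricciflow}). Your write-up simply makes explicit the bookkeeping that the paper leaves as a one-line citation.
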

\begin{proof}
The results follow from Lemma \ref{l-Ricciflow} and Theorems \ref{t-Kahler}, \ref{nonnegative-bi}.
\end{proof}

\begin{cor}\label{c-uniform-hflow} Let $\e(2n)$ be as in Theorem \ref{t-existence-fair}. Suppose $(M^n,g_0)$ is a complete noncompact \K manifold with complex dimension $n$ with nonnegative holomorphic bisectional curvature with maximum volume growth. Suppose there is a Riemannian metric $h$ on $M$ with bounded curvature which is $1+\e(2n)$ close to $g_0$. Then $M$ is biholomorphic to $\C^n$.
\end{cor}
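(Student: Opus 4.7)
The plan is to apply Theorem \ref{t-existence-fair} to the triple $(M,g_0,h)$, obtaining a complete \KRF $g(t)$ on $M\times[0,T]$ with $g(0)=g_0$, curvature bound $|\Rm(g(t))|_{g(t)}\le a/t$, and nonnegative holomorphic bisectional curvature preserved for all $t\in[0,T]$. Fixing any $t_0\in(0,T]$, the metric $g(t_0)$ is then complete \K with \emph{bounded} curvature and nonnegative holomorphic bisectional curvature. Once we verify that $g(t_0)$ additionally has maximum volume growth, the uniformization result of Chau-Tam \cite{ChauTam2008} immediately gives that $M$ is biholomorphic to $\C^n$.

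Establishing maximum volume growth of $g(t_0)$ is the key step. I would unpack the construction from the proof of Lemma \ref{l-Ricciflow}: $g(t)$ is the $C^\infty$-limit on compact subsets of Ricci flows $g_R(t)=\varphi_R(t)^*\bar g_R(t)$, where $\bar g_R(t)$ is Simon's $h$-flow solution starting from $g_{R,0}=\eta_R g_0+(1-\eta_R)h$ and $\varphi_R$ is a smooth family of diffeomorphisms with $d_h(p,\varphi_R(p,t))\le C\sqrt{T}$ uniformly in $R$. By Theorem \ref{t-Simon}, $\bar g_R(t)$ is $(1+2\e)$-close to $h$ uniformly in $R$ and $t$. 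Since $g_R(t)$ is isometric to $\bar g_R(t)$ via $\varphi_R$, for a fixed $p\in M$, setting $q_R=\varphi_R(p,t_0)$ (which stays in a fixed $h$-compact neighborhood of $p$), we get
\[
V_{g_R(t_0)}(B_{g_R(t_0)}(p,r)) = V_{\bar g_R(t_0)}(B_{\bar g_R(t_0)}(q_R,r)) \ge c_1\, V_h(B_h(q_R,c_2 r)),
\]
with $c_1,c_2>0$ depending only on $n$ and $\e$. Since $h$ is $(1+\e)$-close to $g_0$ and the latter has maximum volume growth, $h$ itself has maximum volume growth, and hence $V_{g_R(t_0)}(B_{g_R(t_0)}(p,r))\ge c\, r^{2n}$ for all $r\ge r_0$, with $c,r_0>0$ independent of $R$.

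The final task is to pass to the limit $R\to\infty$. Since $g_R(t_0)\to g(t_0)$ in $C^\infty$ on compact subsets, both the volume forms and (by integrating along geodesics) the distance functions $d_{g_R(t_0)}$ converge on any fixed compact set. Combined with the two-sided bi-Lipschitz comparison between $d_{g_R(t_0)}$ and $d_h$ from the end of the proof of Lemma \ref{l-Ricciflow} (which forces the balls $B_{g_R(t_0)}(p,r)$ to lie in a single $h$-compact set uniformly in $R$), a standard approximation argument should yield $V_{g(t_0)}(B_{g(t_0)}(p,r))\ge c'\, r^{2n}$ for $r$ large. The main technical obstacle is precisely this limiting step, since both the balls and the measures depend on $R$; the uniform bi-Lipschitz comparison with the fixed background metric $h$ is what reduces the argument to volume integrals over fixed compact sets, on which $C^\infty$ convergence applies directly. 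With $g(t_0)$ complete \K, of bounded curvature, with nonnegative holomorphic bisectional curvature and maximum volume growth in hand, \cite{ChauTam2008} then concludes that $M$ is biholomorphic to $\C^n$.
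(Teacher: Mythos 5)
Your proof is correct and takes essentially the same approach as the paper's: pass from $g_R(t_0)$ to the isometric $\bar g_R(t_0)$ (the paper uses $\varphi_t^{-1}(x_0)$, you use $\varphi_R(p,t_0)$, but these are the same bookkeeping), use uniform equivalence of $\bar g_R$ with $h$ and of $h$ with $g_0$ to get the $R$-independent lower volume bound $\ge c\,r^{2n}$, then let $R\to\infty$ using $C^\infty_{\rm loc}$ convergence; the concluding step is the Chau--Tam uniformization. Your treatment of the limiting step is in fact somewhat more explicit than the paper's, which simply asserts the passage from $V_{g_R(t)}(x_0,r+C_2)\ge C_1 r^{2n}$ to maximum volume growth of $g(t)$ without spelling out the bi-Lipschitz containment of the balls in a fixed compact set.
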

\begin{proof} Let $g(t)$ be the solution of \KR flow obtained in Theorem \ref{t-existence-fair}. Then for $t>0$, $g(t)$ is \K with bounded nonnegative holomorphic bisectional curvature. We claim that $g(t)$ has maximum volume growth. Let $x_0\in M$ be fixed. By the proof of Lemma \ref{l-Ricciflow}, using the same notations as in the proof we conclude that
\be\label{e-vol}
V_{\bar g_R(t)}(x_0,r)\ge C_1r^{2n}
\ee
for some $C_1>0$ for all $r$ because $g_0$ has maximum volume growth and $\bar g_R(t)$ is uniformly equivalent to $h$ which in turn is uniformly equivalent to $g_0$. Here
$V_{\bar g_R(t)}(x_0,r)$ is the volume of the geodesic ball $B_{\bar g_R(t)}(x_0,r)$ with respect to $\bar g_R(t)$. As in the proof of Lemma \ref{l-Ricciflow},
$$
V_{\bar g_R(t)}(x_0,r)=V_{g_{R}(t)}(\varphi^{-1}_{t}(x_0),r)\leq V_{g_{R}(t)}(x_0,r+C_2)
$$
for some constant $C_2>0$ independent of $R$ and $x_0$. From this and \eqref{e-vol}, we conclude that $g(t)$ has maximum volume growth. Hence $M$ is biholomorphic to $\C^n$ by the result of \cite{ChauTam2008}.

\end{proof}

\end{document}